\newcommand*{\addFileDependency}[1]{% argument=file name and extension
  \typeout{(#1)}
  \@addtofilelist{#1}
  \IfFileExists{#1}{}{\typeout{No file #1.}}
}
\newcommand{\va}{\text{Var}} %variance
\newcommand{\ex}{\text{E}} %expectation
\newcommand{\E}{\ex}
\newcommand{\R}{\mathbb{R}} %real numbers
\newcommand{\N}{\mathbb{N}} %positive integer numbers
\newcommand{\abs}[1]{|#1|} %absolute value
\newcommand{\cI}{\mathcal{I}} %index set
\newcommand{\1}{\mathbf{1}} %indicator function
\newcommand{\bx}{\mathbf{x}}
\newcommand{\leaf}{{\text{leaf}}} % leaf of tree
\newcommand{\mt}{m_{\text{try}}} %m try for RF
\newcommand{\Mt}{M_{\text{try}}} %m try for RF
\newcommand{\cP}{\mathcal{P}} %random path
\newcommand{\cp}{\mathfrak{p}} %fixed path
\newcommand{\cB}{\mathcal{B}} %i.i.d. Bernoulli random variables
\newcommand{\cF}{\mathcal{F}} %random feature set
\newcommand{\cD}{\mathcal{D}} %big O notation
\newcommand{\pto}{\overset{p}{\to}}
\newcommand{\parent}{\mathrm{F}}
\newtheorem{theorem}{Theorem}
\newtheorem*{model}{LSS model}
\newtheorem{lemma}{Lemma}
\newtheorem{proposition}[lemma]{Proposition}
\newtheorem{definition}{Definition}
\newtheorem{assumption}{}
\begin{document}

\author[1]{Merle Behr\footnote{Equal contribution.}}
\author[1]{Yu Wang\footnote{Equal contribution.}}
\author[1]{Xiao Li}
\author[1, 2, 3]{Bin Yu\footnote{To whom correspondence should be addressed. E-mail: binyu@berkeley.edu}}
\affil[1]{Department of Statistics, UC Berkeley}
\affil[2]{Department of Electrical Engineering and Computer Sciences, UC Berkeley}
\affil[3]{Center for Computational Biology, UC Berkeley}
%\affil[4]{Chan-Zuckerberg Biohub Intercampus Award Investigator}

% titlepage
\title{Provable Boolean Interaction Recovery from Tree Ensemble obtained via Random Forests}
\date{\today}

\maketitle
\begin{abstract}
Random Forests (RF) are at the cutting edge of supervised machine learning in terms of prediction performance, especially in genomics.  
Iterative Random Forests (iRF)
use a tree ensemble from iteratively modified RF to obtain predictive and stable non-linear or Boolean interactions of features.
They have shown great promise for Boolean biological interaction discovery that is central to advancing functional genomics and precision medicine.
However, theoretical studies into how tree-based methods discover Boolean feature interactions are missing.
Inspired by the thresholding behavior in many biological processes, we first introduce a novel discontinuous nonlinear regression model, called the \emph{Locally Spiky Sparse (LSS)} model.
Specifically, the LSS model assumes that the regression function is a linear combination of piecewise constant Boolean interaction terms.
Given an RF tree ensemble, we define a quantity called \emph{Depth-Weighted Prevalence (DWP)} for a set of signed features $S^\pm$. Intuitively speaking, DWP($S^\pm$) measures how frequently features in $S^\pm$ appear together in an RF tree ensemble.
We prove that, with high probability, DWP($S^\pm$) attains a universal upper bound that does not involve any model coefficients, if and only if $S^\pm$ corresponds to a union of Boolean interactions under the LSS model.
Consequentially, we show that a theoretically tractable version of the iRF procedure, called LSSFind, yields consistent interaction discovery under the LSS model as the sample size goes to infinity.
Finally, simulation results show that LSSFind recovers the interactions under the LSS model even when some assumptions are violated.
\end{abstract}

\section{Introduction}

Supervised machine learning algorithms have been proven to be extremely powerful in a wide range of predictive tasks from genomics, to cosmology, to pharmacology.
Understanding how a model makes predictions is of paramount value in science and business alike \cite{Yu2020}. 
For example, when a geneticist wants to understand a particular disease, e.g. breast cancer, a black-box algorithm \textit{predicting} the risk of breast cancer from genotype features is useful, but it does not offer \textit{biological insight}. 
That is, discovery of genes and gene interactions driving a particular disease provides not only understanding as a basic goal in science, but also opens doors for therapeutic treatments.
It is a pressing task, in genomics and beyond, to interpret supervised machine learning (ML) models or algorithms and extract mechanistic information in addition to prediction.

Among many supervised ML algorithms, tree ensembles such as those from Random Forests (RF) \cite{Breiman2001} and gradient boosted decision trees \cite{friedman2001greedy} stand out as they enjoy both state-of-the-art prediction performance in a variety of practical problems and lead to relatively simple interpretations \cite{strobl2007unbiased,louppe2013,zhou2020a,loecher2020unbiased,li2019}. 
To interpret a tree ensemble model, two questions are central:
\begin{itemize}
\item \textbf{Feature importance}: What \textit{features} are important for the model's prediction? 
\item \textbf{Interaction importance}: What \textit{interactions among features} are  important for the model's prediction?
\end{itemize}

While many studies (see \cite{strobl2007unbiased,zhou2020a,li2019,loecher2020unbiased} and the references therein) focus on the RF feature importance, there are relatively few results on the second question. 
In genetics, Wan et al. and Yoshida and Koike  \cite{wan2009,yoshida2011} seek (higher-order) gene-interactions (or epistasis) by  extracting genetic variant interactions from paths of ensembles of fitted decision trees. 
Wan et al. \cite{wan2009} use MegaSNPHunter based on boosting trees and interpret all groups of features that jointly appear on one of the decision paths as a candidate interaction.
Yoshida and Koike \cite{yoshida2011} propose to rank interactions of genetic variants based on how often they appear together on decision paths in an RF tree ensemble.
Recently, iterative Random Forests (iRF) \cite{Basu2017} is proposed to seek predictive, stable, and high-order non-linear or Boolean feature interactions. 
Even though iRF uses the idea that the set of interacting features often appear together on individual decision paths of a tree in an RF ensemble as in Yoshida and Koike \cite{yoshida2011}, it uses several other ideas. That is, iRF incorporates a soft dimension reduction step via iterative re-weighting of features in terms of their Gini importance, in order to stabilize individual decision paths in the trees.
Using the random intersection trees (RIT) \cite{shah2014random} algorithm, iRF extracts stable interactions of arbitrary order in a computationally efficient way, even when the number of features is large.
There is very positive evidence that iRF extracts predictive, stable, and high-order Boolean interaction information from RF in genomics and other fields \cite{Basu2017,Kumbier2018,cliff2019high}.
While all the works mentioned above provide strong empirical evidence that interactions extracted from the ensemble of decision trees via RF or iRF are informative about underlying biological functional relationships, there are no theoretical results regarding interaction discovery using RF, iRF, or other tree-based methods.
In this paper, as a first step towards understanding the interaction discovery property of tree-based methods, we investigate a key idea in the previous works \cite{wan2009,yoshida2011,Basu2017}, namely, that frequent joint appearance of features on decision paths in the RF tree ensemble suggests an interaction. 

One of the most common assumptions made in previous theoretical analyses of RF is a family of smoothness conditions on the underlying mean regression function, such as the Lipschitz smoothness condition, see e.g., \cite{biau2012, scornet2015, wager2018}. 
However, many biological processes show thresholding or discontinuous interacting behavior among biomolecules \cite{wolpert1969,hoffman2013}, which strongly violates the Lipschitz assumption. It is therefore necessary to introduce a model that can capture the thresholding behavior through discontinuous mean regression function.

\textbf{The \emph{Locally Spiky Sparse (LSS)} model.}
Motivated by this thresholding behavior of biomolecules and inspired by RF's predictive performance successes in genomics data problems \cite{jiang2007mipred,chen2012random,2012Touw}, we consider the locally spiky sparse (LSS) model\footnote{The LSS model was first introduced by authors of \cite{Basu2017} (including one of us) and has already been used in simulations to evaluate the performance of iRF/siRF in \cite{Kumbier2018}.}: 
an additive regression model where the mean regression function is assumed to be a linear combination of Boolean interaction functions.
The linear coefficients, as well as the threshold coefficients of the Boolean functions, are called \textit{model coefficients}.
Via Boolean functions, the LSS model is able to capture discontinuous thresholding behavior in biology, hence it can be more relevant for biologists than models with smoothness constraints.
We believe the LSS model is suitable and useful as a new benchmark model under which to evaluate theoretically (and computationally) interaction discovery performance of tree-based ML algorithms including RF.

\textbf{Our contributions.}
Assume that i.i.d. data samples from the LSS model are given and an RF is fit to this data.

1) For an RF tree ensemble, we first define \textit{signed features}. For a decision path of a set of signed features $S^\pm$ in the ensemble, we then define a new quantity called \textit{depth-weighted prevalence (DWP)}. Intuitively speaking, DWP of $S^\pm$ measures how frequently the features in $S^\pm$ appears together in an RF tree ensemble.
We show that DWP has a universal upper bound that depends only on the size of the set of signed features. 
Moreover, the upper bound is attained with high probability as the sample size increases if and only if the signed features represent a union of interactions in the LSS model. 
Based on DWP, we show that a simple algorithm, i.e., \textit{LSSFind} defined in Algorithm \ref{Algo:1}, can consistently recover interaction components in the LSS model regardless of the model coefficients.

2) Our theoretical results imply that feature subsampling of RF is essential to recover interactions by the RF tree ensemble. 
When too few features are sampled at each node, the tree ensemble is close to extremely randomized trees and DWP of any set of signed features is independent of the response, which means it does not contain information on the LLS model; 
When too many features are sampled, all the trees in the ensemble will be very similar to one another and that turns out to make it difficult to use tree structures to distinguish between interactions and non-interactions.
More specifically, the ratio between the number of subsampled features $m_{try}$ and the total number of features $p$ should be a non-zero constant in order for our algorithm to learn higher-order interactions from tree paths. 

\textbf{Existing theoretical works on RF.}
Existing theoretical studies of RF and its variants belong to two categories. The first focuses on estimating the regression function under Lipschitz or related conditions on the underlying regression function via averaging the decision trees in the RF tree ensemble. The second category studies feature importance measures as an RF output.
In contrast, we provide the first study on feature interaction selection consistency under a new LSS model using DWP extracted from the RF tree ensemble.

In particular, in the first category,
Biau \cite{biau2012} considers ``median forests" \cite{duroux2016}, originally considered as a theoretical surrogate by Breiman \cite{breiman2004}, and obtains the $L_2$ convergence rate under the \textit{Lipschitz} continuous models. 
Scornet et al. \cite{scornet2015} give the first consistency result for Breiman's original RF with \textit{sub-sampling} instead of bootstrapping in the \textit{low-dimensional} setting when data is generated via an additive regression model with continuous components. 
Wager and Athey \cite{wager2018} consider a variant of RF, called honest RF, 
in the causal inference setup and prove its point-wise consistency and asymptotic normality when the conditional mean function is Lipschitz continuous.  
Similar, Mentch and Hooker \cite{mentch2015} showed that,
%} 
under some Lipschitz-type conditions, moderately \textit{large number of trees} approximate well the infinite number of trees.
Based on these asymptotic normality results, \cite{mentch2017} derived hypothesis tests for the null hypothesis that the regression function is additive. 
Thus, if one defines \textit{features interaction} as the deviation from a continuous additive regression function, then their results enable testing on a particular candidate.
In contrast, in this work we define \textit{feature interaction} via the non-continuous Boolean functions in the LSS model and we derive consistent interaction selection via the RF tree ensemble, as opposed to a test for an individual interaction as in \cite{mentch2017}.
%}

The second category focuses on theory regarding individual feature importance measures. 
Results in this line of work do not rely on Lipschitz conditions. 
However, to the best of our knowledge, these works study statistical properties of only noisy features, but do not provide results for signal features in finite samples.
Louppe et al.\cite{louppe2013} show that Mean Decrease Impurity (MDI) feature importance for randomized trees has a closed-form formula with infinite number of samples. 
Zhou and Hooker \cite{zhou2020a} use out-of-sample data to improve the MDI feature importance with unbiased theoretical guarantees. 
Li et al. \cite{li2019} show that the MDI feature importance of noisy features is inversely proportional to the minimum leaf node size, and suggest a way to improve the MDI using out-of-bag samples.
L{\"o}cher \cite{loecher2020unbiased} gives a family of MDI feature importance via out-of-bag samples that are unbiased for the noisy features. 
Moreover, many studies focus on permutation-based feature importance measures, in particular, Shapley effects \cite{ishwaran2007,Strobl2008,Janitza2016,nembrini2018,nembrini2019,debeer2020conditional, benard2021}.
Among these works, \cite{benard2021} shows some conceptual similarities to the DWP approach considered in this paper, as they also consider the concept of \textit{joint appearance of features on decision paths} in the RF tree ensemble.
However, instead of using this concept to extract feature interactions, as done in this work, they use it to define an importance sampling scheme to estimate the Shapley effects.

Also related to our work is the recent work \cite{benard2021a}, which analyzes the extraction of rule sets from a RF tree ensemble.
This is very similar to interaction selection as considered in this work, except that the extracted rules in \cite{benard2021a} also include specific estimated thresholds for the individual features.
The theoretical analysis in \cite{benard2021a} focuses on the stability of the selected rules without specifying a particular data generating model.
In contrast, this paper obtains model selection consistency results for LSSFind to estimate signed interactions of signal features under the LSS model.

The rest of the paper is organized as follows: Section \ref{sec:LSS} introduces the LSS model and Boolean interactions in more detail. Section \ref{subsec:RFalgorithm} reviews the RF algorithm and formally defines DWP for a given set of signed features relative to an RF tree ensemble. Section \ref{subsec:mainResults} presents our main theoretical results for DWP and introduces LSSFind, a new theoretically inspired algorithm to detect interactions from RF tree ensembles via DWP. Section \ref{sec:simulations} contains simulation results. We conclude with a discussion in Section \ref{sec:discussion}.

\section{Locally Spiky Sparse (LSS) Model to describe Boolean interactions}\label{sec:LSS}

In this section, we introduce necessary notations and a precise mathematical definition of the LSS model. 
To this end, for an integer $N \in \N$, let $[N] := \{1, 2, \ldots, N\}$. For a set $S$ of finite elements of $[N]$, let $|S|$ denote its cardinality or the number of elements in $S$. For any event $A$, let $\1(A)$ denote the indicator function of $A$. 
 We assume a given data set $\cD = \{(\bx_1, y_1),\ldots, (\bx_n, y_n) \}$ of $n$ samples, with $\bx_i = (x_{i1}, \ldots, x_{in}) \in \R^p$ and $y_i \in \R$. 
We say that the data $\cD$ is generated from a Locally Spiky Sparse (LSS) model when the following assumptions hold true.
\begin{model}
Assume $\cD = \{(\bx_1, y_1),\ldots, (\bx_n, y_n) \}$ are i.i.d. samples from a distribution $P(X,Y)$ such that for some fixed constants $C_\beta > 0, C_\gamma \in (0,0.5)$, the regression function takes the following form:
    \begin{align}\label{eq:E_lssmodel}
        E(Y | X) = \beta_0 + \sum_{j = 1}^J \beta_j \prod_{k \in S_j}\1(X_{k} \gtreqless \gamma_{k})
    \end{align}
    where $\gtreqless$ in \eqref{eq:E_lssmodel} means either $\leq$ or $\geq$, potentially different for every $k$. Coefficients $\beta_j$ are bounded from below, i.e., 
    \begin{align}\label{eq:Cbeta}
    \min_{j=1}^J|\beta_j| > C_\beta     
    \end{align}
     and thresholds $\gamma_j$ are bounded away from 0 and 1, i.e., 
     \begin{align}\label{eq:Cgamma}
         \gamma_j \in (C_\gamma, 1 - C_\gamma),
     \end{align}
     for $j = 1, \ldots, J$. $S_1, \ldots, S_J \subset [p]$ are sets of features called \emph{basic interactions}. We associate $\leq$ in \eqref{eq:E_lssmodel} with a negative sign ($-1$) and $\geq$ with a positive sign ($+1$), such that a \textit{signed feature} can be written as a tuple $(k, b_k) \in [p] \times \{-1, +1\}$. We call $S_1^\pm, \ldots, S_J^\pm\subset [p]\times \{-1,+1\}$ \emph{basic signed interactions} with $S_j^\pm = \{(k, b_k):k\in S_j\}$. 
\end{model}
Note that for interactions with only one feature $k$, due to the sign ambiguity in the LSS model, i.e., $\1(X_k \leq a) = 1 - \1(X_k > a)$, both $\{(k, -1)\}$ and $\{(k, +1)\}$, are counted as an interaction.

The LSS model aims to capture interactive thresholding behavior which has been observed for various biological processes \cite{wolpert1969, ferrelljr1996, little1999, kobiler2005, little2005, levine2008}.
For example, in gene regulatory networks often a few different expression patterns are possible. Switching between those patterns can be associated with individual components that interact via a threshold effect \cite{little1999, kobiler2005, little2005}.
Such a threshold behavior is also observed for other signal transduction mechanisms in cells, e.g, protein kinase \cite{ferrelljr1996} and cell differentiation \cite{wolpert1969}.
Another example of a well studied threshold effect is gene expression regulation via small RNA (sRNA) \cite{levine2008}. 
Although for most biological processes the precise functional mechanisms between different features and a response variable of interest are much more complicated than what the LSS model can capture, 
theoretical investigations of a particular learning algorithm, such as RF, are only feasible within a well defined and relatively simple mathematical model, and useful for practice when such a model is empirically relevant.
Given the empirically observed interactive threshold effects in many real biological systems, the LSS model clearly provides a useful enrichment to the current state of theoretical studies of RF and related methods, since current theoretical models do not capture the often observed interactive threshold behavior.

In order to prove our main Theorem \ref{theo:mainResult}, we further impose the following constraints on the LSS model.
\begin{itemize}
    \item[C1] ({Uniformity}) $X$ is uniformly distributed on $[0,1]^p$.
\end{itemize}
This uniformity assumption implies that each feature is independent of each other. Because any decision tree remains invariant under any strictly monotone transform of an individual feature, the uniform distribution assumption of $X$ can be relaxed to the assumption that individual features $X_j$, $j \in [p]$, are independent with a distribution that has Lebesgue density.
We note that such an independence assumption might be violated in real world problems.
For example, for genetic data with SNPs or gene expression as features $X_j$ there will typically be a strong correlation between features which are located close-by on the chromosome.
However, in many cases, it is feasible to restrict to a subset of features (e.g., those which are located sufficiently far apart on the genome) in order to obtain approximate independence.
In Section \ref{sec:simulations} we also demonstrate in simulations that for sufficiently weak feature correlation one can still obtain accurate interaction selection with LSSFind.
\begin{itemize}
    \item[C2] ({Bounded-response}) $Y$ is bounded, i.e. $|Y| < 1$. 
\end{itemize}
Note that although we assume $|Y| < 1$, the constant $1$ can be changed to any constant as we can scale $Y$ by any positive number and the conclusions in our main results will remain intact. This boundedness condition can be further relaxed to that the residue $Z := Y - E(Y | X)$ is independent of $X$ and $1$-subgaussian if we assume a slightly stronger assumption on $p$ and $n$ than the conditions in C4. 
See Proposition \ref{prop:sub_gaussian} for more detail. 
\begin{itemize}
    \item[C3] ({Non-overlapping basic interactions}) $S_1,\ldots, S_J$ do not overlap, i.e.,
    $ S_{j_1} \cap S_{j_2} = \emptyset \text{ for all } j_1\neq j_2.$
\end{itemize}
The non-overlapping assumption that different interactions $S_{j_1}, S_{j_2}$ with $j_1 \neq j_2$ are disjoint might not always be justified in real world problems. However, it is a crucial assumption for our theorem to hold.
The general problem with overlapping interactions in the LSS model is that such models can be non-identifiable, meaning that different forms of \eqref{eq:E_lssmodel} can imply the same regression function $E(Y | X)$. 
For example, for the response $\1(X_1 < 0.5, X_2<0.5) + \1(X_1 > 0.5, X_2 > 0.5)$, by the definition of signed interactions in the LSS model, it has two basic signed interactions $\{(1, -1), (2, -1)\}$ and $\{(1, +1), (2, +1)\}$. However, we can also write it as $1 - \1(X_1 < 0.5, X_2 > 0.5) - \1(X_1 > 0.5, X_2 < 0.5)$, which has two different basic interactions $\{(1, -1), (2, +1)\}$ and $\{(1, +1), (2, -1)\}$.
This means, a set of signed features which is an interaction in one of the representations is not an interaction in the other.
Due to this identifiability problem, overlapping features can lead to both false positives and false negatives in term of interaction recovery with RF.
One may try to define interaction more broadly to avoid this identifiability problem. For the previous example $\1(X_1 < 0.5, X_2<0.5) + \1(X_1 > 0.5, X_2 > 0.5)$, although the basic signed interactions are not unique, they always constitute of both $X_1$ and $X_2$. Whether the coefficients $\{\beta_j\}_{j=0}^J$ are allowed to have different signs also affects the identifiability. The previous example is identifiable if we only allow positive coefficients. 
For domain problems where interactions are believed to be overlapping, one should investigate different identifiability conditions, but as this depends on the precise application, we leave this for future work. Our work in this paper provides a pathway to investigate this in detail later.
We demonstrate how overlapping features affect our results with a simulation study in Section \ref{sec:simulations}.

In Section \ref{subsec:mainResults} we show that a simple algorithm, LSSFind, that takes an RF tree ensemble as input, can consistently recover basic interactions $S_1,\ldots, S_J$ in the LSS model.
Besides recovering $S_j \subset [p]$, LSSFind can also recover the signs of each feature $k \in \cup_{j = 1}^J S_j $ in the LSS model, which indicates whether the corresponding threshold behavior in \eqref{eq:E_lssmodel} is given by a $\leq$- or a $\geq$-inequality. 
Without loss of generality, in the rest of the paper we assume that all inequalities are $\leq$ in \eqref{eq:E_lssmodel}, that is,
 \begin{align}\label{eq:E_lssmodel_simple}
        E(Y | X) = \beta_0 + \sum_{j = 1}^J \beta_j \prod_{k \in S_j}\1(X_{k} \leq \gamma_{k}).
    \end{align}
We stress, however, that all our results also hold for the general case \eqref{eq:E_lssmodel}.
Because we assume that all the features in basic interactions have minus signs, we denote $S_1^-, \ldots, S_J^- \subset [p] \times \{-1, +1\}$ with $S_j^- = \{(k, -1) \; : \; k \in S_j\}$ as \textit{basic signed interactions} of the LSS model.
As our theoretical results will show, the RF tree ensemble can recover not only the basic interactions $S_j \subset [p]$, but also basic signed interactions $S_j^- \subset [p]\times \{-1, +1\}$.
In other words, through DWP and under the LSS model, the RF tree ensemble can recover not only which features interact with each other in the LSS model, but also whether a particular feature in an interaction has to be larger or smaller than some threshold for this interaction to be active.
Besides basic signed interactions, we also define a \textit{union signed interaction} as a union of individual basic signed interactions, as made more precise in the following definition.
\begin{definition}[Union signed interactions]\label{def:unionInter}
    In the LSS model with basic signed interactions $S_1^-, \ldots, S_J^- \subset [p]\times \{-1, +1\}$, a (non-empty) set of signed features $S^\pm \subset [p]\times \{-1, +1\}$ is called a \textit{union signed interaction}, if 
    \begin{align}
    \begin{aligned}
    {S^\pm} = \bigcup_{j \in \cI} S_j^- \;\bigcup_{j \in \cI_s, k \in S_j, b_k\in \{-1,+1\}}\{(k, b_k)\}
    \end{aligned}
\end{align}
for some (possibly empty) set of indices $\cI \subset \{j \in [J] \;:\; |S_j| > 1 \},\; \cI_s  \subset \{j \in [J] \;:\; |S_j| = 1 \}$.
\end{definition}

In other words,
a union signed interaction is a union of one or more basic signed interactions. For a single-feature signed interaction, its sign-flipped counterpart can also be added to the union.
For example, for an LSS model with $E(Y|X) = \1(X_1 \leq 0.5) + \1(X_2 < 0.5, X_3<0.5),$ there are two basic signed interactions, namely, $\{(1, -1)\}$ and $\{(2, -1), (3, -1) \}$, and five union signed interactions, namely, $\{(1, -1)\}$, $\{(2, -1), (3, -1) \}$, $\{ (1, +1)\}$, $\{(1, -1), (2, -1), (3, -1) \}$, and $\{(1, +1), (2, -1), (3, -1) \} $.

The theoretical results that we present in Section \ref{subsec:mainResults} are asymptotic, in the sense that they assume the sample size $n$ to go to infinity.
Denote the number of signal features $\cup_{j=1}^J S_j$ in the LSS model  to be $s$, i.e., $\sum_{j = 1}^J |S_j| = s$.
We assume $s$ is uniformly bounded regardless of $n$ and $p$.
However, the overall number of features $p$ or the number of noisy features $p - s$ can grow to infinity as $n$ increases.
Our theoretical results also assume 
\begin{itemize}
    \item[C4] (Sparsity) \label{A:asym_n_p} $s = O(1)$ and $\frac{\log(p)}{n} \to 0.$
\end{itemize}
This means that, in contrast to many theoretical works \cite{denil2014, scornet2015, wager2018}\footnote{Note that \cite{biau2012} covers the high dimensional setting, too, but their results only depend on $s$ and not $p$.}, our results hold in a high-dimensional setting as long as the overall number of signal features $s$ is bounded. 
The limit $\frac{\log(p)}{n} \to 0$ is a common assumption for high dimensional settings when analyzing consistency properties of Lasso (see, for instance, \cite{Lasso1996,zhao06a,hastie2015statistical}).

\section{Depth-Weighted Prevalence (DWP) for an RF Tree Ensemble}\label{subsec:RFalgorithm}
In this section, we first review the RF algorithm and then define DWP for a given RF tree ensemble.
\subsection{Review of RF}
RF is an ensemble of classification or regression trees, where each tree $T$ defines a mapping from the feature space to the response. Trees are constructed on a bootstrapped or subsampled data set $\mathcal{D}^{(T)}$ of the original data $\mathcal D$. Note that each tree is conditionally independent of one another given the data. 
Any node $t$ in a tree $T$ represents a hyper-rectangle $R_t$ in the feature space. A split of the node $t$ is a pair $(k_t, \gamma_t)$ which divides the hyper-rectangle $R_t$ into two hyper-rectangles $R_{t,l}(k_t, \gamma_t) = R_t\cap \1(X_{k_t} \le  \gamma_t)$ and $R_{t,r}(k_t, \gamma_t) = R_t\cap \1(X_{k_t} > \gamma_t)$, corresponding to the left child $t_l$ and right child $t_r$ of node $t$, respectively. For a node $t$ in a tree $T$, $N_n(t) = |\{i\in \mathcal{D}^{(T)}:\bx_i \in R_t\}|$ denotes the number of samples falling into
$R_t$.

Each tree $T$ is grown using a recursive procedure (denoted as CART algorithm \cite{Breiman2001}), which proceeds in two steps for each node $t$. First, a subset $\Mt \subset [p]$ of features is chosen uniformly at random. The size of $\Mt$ is $\mt$. Then the optimal split $k_t \in \Mt, \gamma_t \in \R$ is determined by maximizing impurity decrease defined in \eqref{eq:impdecrease}:
\begin{equation}
\label{eq:impdecrease}
    \Delta_I^n(t) := I_n(t) - \frac{N_n(t_l)}{N_n(t)}I_n(t_l) - \frac{N_n(t_r)}{N_n(t)}I_n(t_r)
\end{equation}
where $t_l$($t_r$) is the left(right) child of $t$ and for sample size $n$, $I_n(t)$ is the impurity measure defined in this paper as 

$$ I_n (t) = \mbox{variance of } \{y_i, i \in R_t\},$$
which is the variance of the response $y_i$'s for all the samples in the region $R_t$. Note that the analysis of this paper holds only for the variance impurity measure but it is possible to extend to other impurities measures, which is left as future work.  The procedure terminates at a node $t$ if two children contain too few samples, e.g., $\min\{N_n(t_l), N_n(t_r)\} \leq 1$, or if all responses are identical, e.g., $I_n(t) = 0$. For any tree $T$ and any leaf node $t_{\leaf}\in T$, denote $\cp(t_\leaf)$ to be a path to that leaf node.

\begin{definition}[Depth of a path]
Given a path $\cp(t_\leaf)$ that connects root node $t_{1}$ and leaf node $t_{\leaf}$ in a tree $T$, we define the depth of the path $\cp(t_\leaf)$ to be the number of non-root nodes contained in the path.
\end{definition}

For any hyper-rectangle $R_t$, $\mu(R_t)$ denotes its volume.
We make the following assumptions on an RF tree ensemble:
\begin{assumption}[increasing depth of a tree in the RF ensemble]\label{A:increasing_depth}
The minimum depth of any path in any tree goes to infinity, i.e., \[\min_{T}\min_{t_\leaf\in T} D(t_\leaf)\pto \infty\] as $n\to \infty$.
\end{assumption}
\begin{assumption}[balanced split in a tree of the RF ensemble]\label{A:balancedsplit}
Each split $(k_t, \gamma_t)$ is balanced: for any node $t$,  $$\min\left(\frac{\mu(R_{t,l}(k_t, \gamma_t))}{\mu(R_{t,r}(k_t, \gamma_t))},\frac{\mu(R_{t,r}(k_t, \gamma_t))}{\mu(R_{t,l}(k_t, \gamma_t))}\right) > \frac{C_\gamma}{1-C_\gamma}.$$
\end{assumption}
Note that, without loss of generality, we use the same $C_\gamma$ here as in the LSS model. Otherwise, we can always let $C_\gamma$ to be the minimum of the two.
\begin{assumption}[$\mt$ is of order $p$]\label{A:mtry}
$C_m p + (1 - C_m)s \leq \mt \leq (1 - C_m)(p -  s )$ where $C_m \in (0,0.5)$ is a constant.
\end{assumption}
\begin{assumption}[no bootstrap or subsampling of samples]\label{A:no_bootstrap}
%\blue{
All the trees in RF are grown on the whole data set without bootstrapping or subsampling, i.e. $\cD^{(T)} = \cD$ for any $T$.
%}
\end{assumption}

\ref{A:no_bootstrap} is a technical assumption that simplifies our notation and analysis. 
We assume that each tree is grown using all of the samples, which is quite different from the assumptions on subsampling in recent theoretical works on RF (e.g. \cite{biau2012} and \cite{wager2018}). 
The subsampling rate plays a crucial role in the analysis of the asymptotic distribution of the RF predictor \cite{biau2012, wager2018}, where it is assumed that the subsampling rate converges to zero at a desirable rate.
However, since we focus on the features selected at each node and not on the asymptotic distribution of the predictor, we do not require such assumptions on the subsampling rate.

\ref{A:increasing_depth} ensures that the length of any decision path in any tree tends to infinity. This assumption is reasonable as tree depths in RF is usually of order $O(\log n)$ which tends to infinity as $n\to \infty$. \ref{A:balancedsplit} ensures that each node split is balanced. Similar conditions are used commonly in other papers \cite{wager2018}. \ref{A:mtry} shows the important role of the parameter $\mt$. Roughly speaking, $\mt$ cannot be too small or too big. When $\mt$ is too small, there will be too many splits on irrelevant features which makes the tree noisy. When $\mt$ is too big, there will be too little variability in the tree ensemble. This motivation will be made rigorous in the proof of Theorem \ref{theo:mainResult}. 
\subsection{Depth weighted prevalence (DWP)}\label{subsec:InteractionPrevalence}

In this section, for a tree ensemble from RF, we formally introduce Depth Weighted Prevalence (DWP). 
Given a decision tree $T$ in an RF tree ensemble, we can randomly select a path $\cP$ of $T$ as follows: we start at the root node of $T$ and then, at every node, randomly go left or right until we reach a leaf node. 
This is equivalent to selecting a path in $T$ of depth $D$ with probability $2^{-D}$ from all the paths in a decision tree.
Denote the nodes in $\cP$ to be $t_1,\ldots, t_D, t_\leaf$. 
As such, any path $\cP$ in a decision tree $T$ can be associated with a sequence of signed features $(k_{t_1}, b_{t_1}), \ldots, (k_{t_D}, b_{t_D}) \in [p] \times \{-1,+1\}$, where $D$ is the depth of the path and for any inner node $t \in [D]$ on the path the sign $b_{t}$ indicates whether the path at node $t$ followed the $\leq$ direction ($b_{t} = -1$) or the $>$ direction ($b_{t} = +1$) for the split on feature $k_t \in [p]$.
For a given RF tree ensemble depending on data $\cD$, the randomly selected path $\cP$ of tree $T$ and any fixed constant $\epsilon > 0$, we now define $\hat{\cF}_{\epsilon}(\cP, T,\cD)$ to be the set of signed features on $\cP$ where the corresponding node in the RF had an impurity decrease of at least $\epsilon$, that is,
\begin{align}
\begin{aligned}
    &\hat{\cF}_{\epsilon}(\cP, T,\cD) := \{ (k_t, b_{t}) \; | \;  \text{ $t$ is an inner node of $\cP$} \\ 
    & \text{ with }\Delta_I^n(t) > \epsilon
   \text{ and feature } k_t \text{ appears first time on $\cP$} \}.
   \end{aligned}\label{eq:define_hat_cf}
\end{align}
We use $\hat{\cF}_{\epsilon}$ as a shorthand for $\hat{\cF}_{\epsilon}(\cP, T, \cD)$ when the path $\cP$ from tree $T$ and the data $\cD$ of interest are clear. Note that if a feature appears more than once on the path $\cP$, its sign in $\hat \cF_\epsilon $ is the sign when the feature appears the first time with the impurity decrease above the threshold.
Our main theorem will be stated in terms of the DWP of a signed feature set $S^\pm \subset [p]\times \{-1, +1\}$ on the random path $\cP$ within $\hat{\cF}_{\epsilon}$. To formally define the DWP of $S^\pm$, we first need to identify the sources of randomness underlying $\hat{\cF}_{\epsilon}$. There are three layers of randomness involved:
\begin{enumerate}
    \item \textbf{($\cD$: Data randomness)} the randomness involved in the data generation;
    \item \textbf{($T$: Tree randomness)} the randomness involved in growing an individual tree with parameter $\mt$, given data $\cD$;
    \item \textbf{($\cP$: Path randomness)} the randomness involved in selecting a random path $\cP$ of depth $d$ with probability $2^{-d}$, given a tree $T$ from an RF tree ensemble with parameter $\mt$ based on data $\cD$. 
\end{enumerate}
In the following definition of the DWP of signed feature sets, the probability is conditioned on data $\cD$, and taken only over the randomness of the tree $T$ and the randomness of selecting one of its paths as in $\cP$.

\begin{definition} (Depth-Weighted Prevalence (DWP))
Conditioning on data, for any signed feature set $S^\pm \subset [p]\times \{-1, +1\}$, we define the \emph{Depth-Weighted Prevalence (DWP)} of $S^\pm$ as the probability that $S^\pm$ appears on the random path $\cP$ within the set $\hat{\cF}_\epsilon, $ that is, 
\begin{align} \mathrm{DWP}_{\epsilon}(S^\pm) =& P_{(\cP, T) }(S^\pm \subset \hat\cF_{\epsilon} \;|\; \cD).\label{eq:def_dwp}
\end{align}
We emphasize that the probability of selecting a path in a tree $T$ is $P(\cP|T) = 2^{-d}$ where $d$ is the depth of the path $\cP$.
\end{definition}

While we only have a fixed sample size which means the data randomness is inevitable, the tree randomness and path randomness are generated by the algorithm and thus can be eliminated by sampling as many trees and paths as we like. 
Because the DWP in \eqref{eq:def_dwp} is only conditioned on data, for any given $\epsilon > 0$ and set of signed features $S^\pm$, it can be computed with arbitrary precision from an RF tree ensemble with sufficiently many trees (recall that, conditioned on data $\cD$, the different trees in an RF tree ensemble are generated independently). 

\section{Main results}\label{subsec:mainResults}
In this section we present our main theoretical results which are concerned with DWP as introduced in the previous section.
Our results show that LSSFind (Algorithm \ref{Algo:1}), which is based on DWP at an appropriate level $\epsilon$ described in Theorem \ref{Theo:mainResult_final}, consistently recovers signed interactions under an LSS model.
Before we state our main results in full detail, we want to illustrate it with a simple example.

\begin{algorithm2e}
 \caption{LSSFind($\mt$,$\epsilon$, $\eta$, $s_{\max}$)}\label{Algo:1}
\SetKwInOut{Input}{Input}
\Input{Dataset $\cD$, RF hyperparameter $m_{try}$, impurity threshold $\epsilon > 0$, prevalence threshold $\eta > 0$, and maximum interaction size $s_{\max} \in \N$.}
\SetKwInOut{Output}{Output}
\Output{A collection of sets of signed features.}
 Train an RF using dataset $\cD$ with parameter $m_{try}$.\;
 
 return $\{S^\pm \subset [p]\times \{-1, +1\} \text{ such that } |S^\pm| \leq s_{\max} \text{ and } 2^{|S^\pm|} \cdot \mathrm{DWP}_{\epsilon}(S^\pm) \geq 1 - \eta \}.$
\end{algorithm2e}

\textbf{Illustrative example:}
Assume that $p = 2$ and there are just two features $X_1$ and $X_2$. Assume there is a single interaction $J = 1$ 
and the regression function is given by
\begin{align}\label{eq:response_simple_example}
    E(Y | X_1, X_2) = \1(X_1 \leq 0.5) \cdot \1(X_2 \leq 0.5).
\end{align}
The response surface of \eqref{eq:response_simple_example} is shown in Figure \ref{fig:trees_simple_example} in the top middle plot.
We consider the population case, where we have full access to the joint distribution $P(X,Y)$, that is, we have access to an unlimited amount of data ($n = \infty$).
When we apply the RF algorithm as in Section \ref{subsec:RFalgorithm}, then for each individual tree in the forest the root node either splits on feature $X_1$ or on feature $X_2$. 
Since $X_1$ and $X_2$ are completely symmetric in the distribution $P(X,Y)$, thus, if the RF algorithm grows more and more trees, in the limit, half of them will split on $X_1$  at the root node and half of them split on $X_2$ at the root node. For infinite data, this 50/50 split is introduced by the CART algorithm since the two splits have identical decreases of impurities.
Furthermore, the split at any node will be at $0.5$ for any of the two features, since the two splits corresponding to $X_1 \leq 0.5$ and $X_2 \leq 0.5$ maximize the impurity decrease given infinite data.
This is illustrated in Figure \ref{fig:trees_simple_example}, where the left bottom figure shows a tree which splits on feature $X_1$ at the root node and the right bottom figure shows a tree which splits on feature $X_2$ at the root node.
As each tree in RF grows to purity, when the root node splits at feature $X_1$, then for the path of the tree which follows the $(1, +1)$ direction, that is, the $X_1 > 0.5$ direction, the tree will stop growing, as the respective response surface is already constant.
However, for the path of the tree which follows the $(1, -1)$ direction, that is, the $X_1 \leq 0.5$ direction, the tree will further split on the remaining feature $X_2$. Then the tree will stop because the node reaches purity.
Thus, we conclude that the forest consists of exactly the two different trees shown in Figure \ref{fig:trees_simple_example} and in the limit, where the number of trees grows to infinity, each of the two trees appears equally often.
\begin{figure}[t]
    \centering
    \includegraphics[width = 0.45\textwidth]{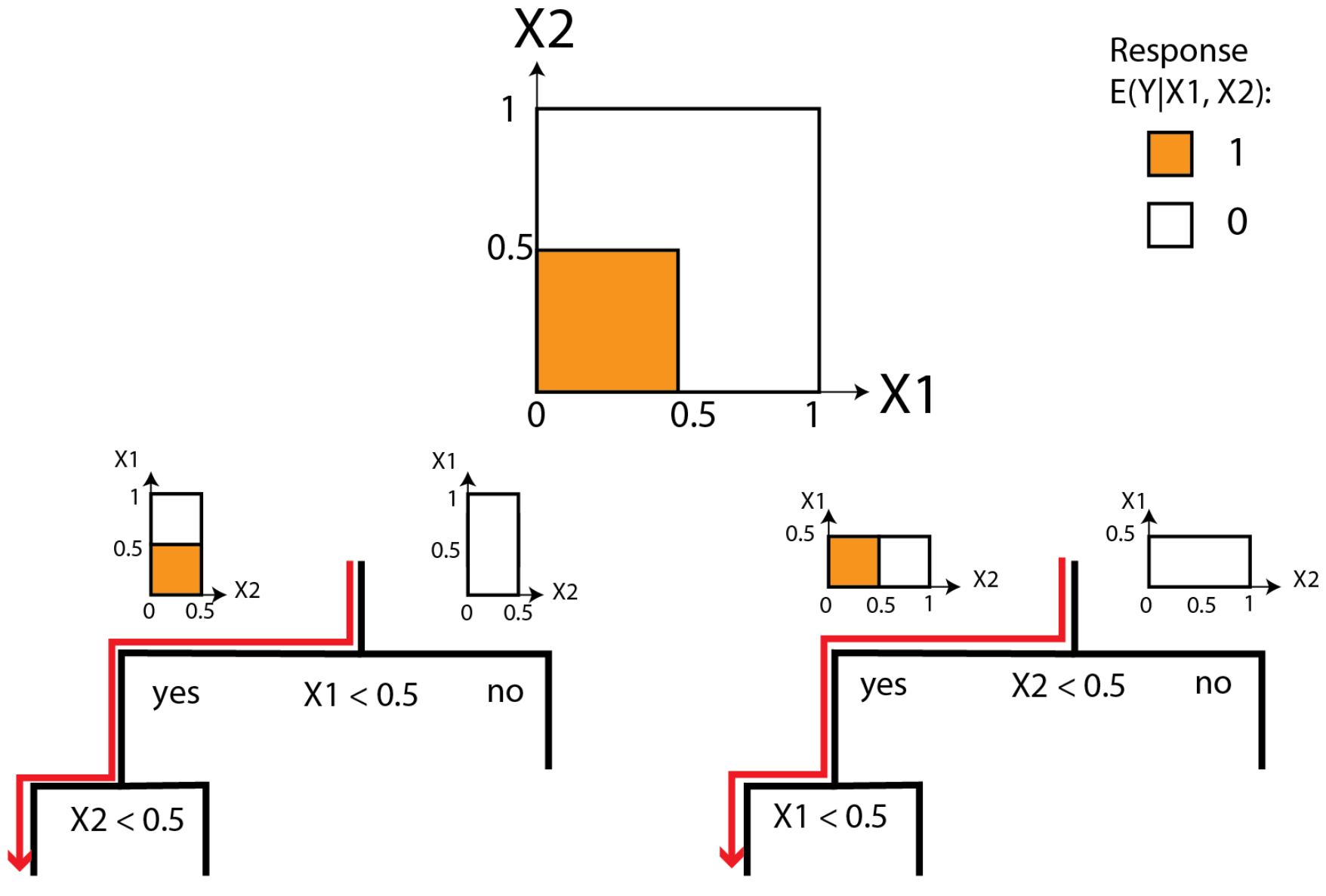}
    \caption{
    Exemplary RF decision trees trained on data as in \eqref{eq:response_simple_example} to illustrate the results that will appear in Theorem \ref{theo:mainResult}.
    Top center: response surface of $E\left( Y \middle| \; X_1, X_2 \right)$ as in \eqref{theo:mainResult} with $X_1 \in [0,1]$ on the x-axis and $X_2 \in [0,1]$ on the y-axis. 
    Bottom left: a decision tree that splits on feature $X_1$ at the root node with the respective regions and conditional response surfaces for left and right child of the root node. 
    Bottom right: a decision tree that splits on feature $X_2$ at the root node. The red-marked decision paths contain all signed features from the basic signed interaction $S^- = \{(1,-), (2, -)\}$ from an LSS model as in \eqref{eq:response_simple_example}.
    For both of the trees, if one starts at the root node and randomly goes left or right at every node, then the probability of the basic signed interaction to appear on the path is $\mathrm{DWP}_{\epsilon}(S^-) = 2^{-2} = 2^{-|S^-|}$.
    In contrast, for any other set of signed features $S^\pm \subset [p] \times \{-1,+1\}$ it holds that $\mathrm{DWP}_{\epsilon}(S^\pm) < 2^{-|S^\pm|}$.
    This provides a simple example for the more general result in Theorem \ref{theo:mainResult}. }
    \label{fig:trees_simple_example}
\end{figure}

For each node $t$ in these trees, the impurity decrease satisfies $\Delta_I^n(t) \geq 1/16$.
Thus, for any $\epsilon < 1/16$, we can show that the DWP of the basic signed interaction $S^- = \{(1, -1), (2, -1)\}$ is $2^{-|S^-|}$. To show this, we can get:
\begin{align*}
&\mathrm{DWP}_{\epsilon}(S^-) = P(S^-\subset \hat{\cF}_\epsilon | \cD)\\
&=\underbrace{P_T(\text{$T$'s root splits on feature $1$})}_{=0.5, \text{correspond to the left tree}}\\
&\cdot \underbrace{P(S^-\subset \hat{\cF}_\epsilon|\cD, \text{$T$'s root splits on feature $1$})}_{=0.25, \text{ only the red path satisfies this.}} +\\
&\underbrace{P_T(\text{$T$'s root splits on feature $2$})}_{ = 0.5, \text{correspond to the right tree}}\\ &\cdot  \underbrace{P(S^-\subset \hat{\cF}_\epsilon | \cD,\text{$T$'s root splits on feature $2$})}_{=0.25, \text{ only the red path satisfies this.}}\\
&=0.5 \cdot 2^{-2} + 0.5 \cdot  2^{-2} =
2^{-2} = 2^{-|S^-|}.
\end{align*}

In the above example with infinite data, the tree depth is not going to infinity, which means it does not satisfy assumption \ref{A:increasing_depth}. \ref{A:increasing_depth} is needed only for the finite sample case because, for finite samples, internal nodes in a tree can never reach purity due to noise.

In Figure \ref{fig:trees_simple_example} the paths which contain the basic signed interaction $ S^- = \{(1, -1), (2, -1)\}$ are marked red. 
 For all the other sets of signed features $S^\pm \subset [p]\times \{-1, +1\}$, it is easy to check that
 $\mathrm{DWP}_{\epsilon}(S^\pm) < 2^{-|S^\pm|}.$
 For example, 
  \begin{align*}
     \mathrm{DWP}_{\epsilon}(\{ (1, -1), (2, +1) \}) = 0.5 \cdot  2^{-2} + 0.5 \cdot 0 < 2^{-2}
 \end{align*}
 and 
   \begin{align*}
     &\mathrm{DWP}_{\epsilon}(\{ (1, -1) \}) \\
     &=\underbrace{P_T(\text{$T$'s root splits on feature $1$})}_{=0.5, \text{correspond to the left tree}}\\
     &\cdot \underbrace{P(\{ (1, -1) \}\subset \hat{\cF}_\epsilon|\cD, \text{$T$'s root splits on feature $1$})}_{=0.5, \text{ any path that goes left at the root satisfies this.}} +\\
&\underbrace{P_T(\text{$T$'s root splits on feature $2$})}_{ = 0.5, \text{correspond to the right tree}}\\ &\cdot  \underbrace{P(S^-\subset \hat{\cF}_\epsilon | \cD,\text{$T$'s root splits on feature $2$})}_{=0.25, \text{ only the red path satisfies this.}}\\
&= 0.5 \cdot 2^{-1} + 0.5 \cdot 2^{-2} < 2^{-1}.
 \end{align*}

  As we will formally state in the two theorems below, the same reasoning holds true asymptotically for any RF trained on the data from the LSS model, namely, the DWP of a set of signed features $S^\pm \subset [p]\times \{-1, +1\}$ is always upper bounded by $2^{-|S^\pm|}$ and this upper bound is attained if and only if $S^\pm$ is a union signed interaction.
Recall that the DWP depends on the data $\cD$.
It turns out, that the general upper bound follows directly from the construction of DWP and holds for any data $\cD$, i.e., independent of the LSS model, as the following theorem shows.
\begin{theorem}\label{theo:mainResult_general_upper_bound}
For any impurity threshold $\epsilon > 0$ and any set of signed features $S^\pm \subset [p]\times \{-1, +1\}$ for the RF algorithm from Section \ref{subsec:RFalgorithm} it holds true that
\begin{itemize}
 \item (General upper bound) \quad  $\mathrm{DWP}_{\epsilon}(S^\pm) \leq 2^{- |S^\pm|}.$
\end{itemize}
\end{theorem}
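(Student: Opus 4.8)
The plan is to condition on the realized tree $T$ and exploit the fact that, given $T$, the path $\cP$ is produced by a sequence of independent fair coin flips, one at each internal node visited. By the tower property, $\mathrm{DWP}_{\epsilon}(S^\pm)=\E_{T\mid\cD}\big[P_{\cP}(S^\pm\subset\hat\cF_{\epsilon}\mid T,\cD)\big]$, so it suffices to bound the inner probability by $2^{-|S^\pm|}$ for every fixed $T$. I would first dispose of a degenerate case: if $S^\pm$ contains both $(k,-1)$ and $(k,+1)$ for some feature $k$, then since $\hat\cF_{\epsilon}$ records only the sign at the \emph{first} high-impurity occurrence of each feature, $\hat\cF_{\epsilon}$ contains at most one of the two, the event $\{S^\pm\subset\hat\cF_{\epsilon}\}$ is empty, and the bound holds trivially. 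So assume $S^\pm=\{(k_1,b_1),\dots,(k_m,b_m)\}$ with $k_1,\dots,k_m$ pairwise distinct and $m=|S^\pm|$.

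Next I would reformulate the event in terms of the coin flips. By the definition of $\hat\cF_{\epsilon}$, $(k_i,b_i)\in\hat\cF_{\epsilon}$ if and only if $\cP$ passes through at least one node splitting on feature $k_i$ with $\Delta_I^n>\epsilon$, and at the first such node the coin sends the path in the direction coded by $b_i$. Call a node on $\cP$ \emph{decisive} if it is the first node on $\cP$ splitting on some $k_i$ ($i\in[m]$) with impurity decrease exceeding $\epsilon$. Since the $k_i$ are distinct, any path carries at most $m$ decisive nodes, and $\{S^\pm\subset\hat\cF_{\epsilon}\}$ is contained in the event that the path carries exactly $m$ of them with the coin at each matching the sign prescribed for the feature split there.

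I would then run an iterated-conditioning argument down the decisive nodes. Let $\tau_1<\tau_2<\cdots$ be the depths of the successive decisive nodes encountered by $\cP$, with $\tau_j=\infty$ if fewer than $j$ exist. The structural point is that whether the node at a given depth is decisive, together with the feature it splits on and its impurity decrease, is a deterministic function of $T$ and of the coin flips made strictly shallower; hence each $\tau_j$ is a stopping time for the coin-flip filtration and the flip at depth $\tau_j$ (when finite) is a fair coin independent of that history. Writing $A_j$ for the event that $\tau_j<\infty$ and the coins at $\tau_1,\dots,\tau_j$ all match the correct sign, we get $P(A_0)=1$, $\{S^\pm\subset\hat\cF_{\epsilon}\}\subseteq A_m$, and, conditioning on the coin history up to but not including the flip at $\tau_j$ on $\{A_{j-1},\tau_j<\infty\}$, the match probability is exactly $1/2$, so $P(A_j)\le\tfrac12 P(A_{j-1})$ and $P_{\cP}(S^\pm\subset\hat\cF_{\epsilon}\mid T,\cD)\le P(A_m)\le 2^{-m}$. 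Averaging over $T$ finishes the proof.

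The only real obstacle is the measurability bookkeeping in the stopping-time step: one must argue cleanly that ``being decisive,'' together with the identity and impurity of the split at a given depth, depends only on the coins flipped at strictly shallower nodes, so that conditionally the deeper coin is still fair and independent. Everything else---the reduction, the reformulation, and the iterated factors of $1/2$---is routine. If one prefers to avoid stopping times, an equivalent packaging is to sum $2^{-D(t_{\leaf})}$ over the leaves $t_{\leaf}$ of $T$ whose root-to-leaf path resolves all of $S^\pm$ correctly, and show by induction on $m$ (peel off the shallowest decisive node and recurse into the correct subtree) that this sum is at most $2^{-m}$.
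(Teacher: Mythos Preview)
Your proposal is correct and follows essentially the same approach as the paper: both condition on the tree, observe that $\{S^\pm\subset\hat\cF_\epsilon\}$ forces $m=|S^\pm|$ particular fair coin flips along the path to land on prescribed sides, and conclude the probability is at most $2^{-m}$. The paper packages this more tersely by introducing, for each feature $k$, the Bernoulli variable $B^k$ drawn at the first appearance of $k$ on $\cP$, asserting the inclusion $\{S^\pm\subset\hat\cF_\epsilon\}\subset\{B^{k_1}=b_1,\ldots,B^{k_m}=b_m\}$, and simply stating that the right-hand event has probability $2^{-m}$; your stopping-time / iterated-conditioning argument is precisely the rigorous justification of that last equality, which the paper leaves implicit.
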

In addition, when the data $\cD$ is generated from an LSS model, asymptotically (as the sample size increases) the general upper bound is attained if and only if $S^\pm$ is a union signed interaction, as the following theorem shows.
\begin{theorem}\label{theo:mainResult}

Assume that the data $\cD$ is generated from an LSS model with uniformity, bounded-response, non-overlap basic interactions, and sparsity constraints (see C1 - C4).
For any impurity threshold $\epsilon > 0$, let
\begin{align}\label{eq:epsilon_tilde}
    b(\epsilon) := \left({4 \epsilon}/({ C_\beta^2 C_\gamma^{2s - 1}  })\right)^{ C_m^{2 s} / \log(1/C_\gamma) },
\end{align}
with constants $C_\beta$ as in \eqref{eq:Cbeta}, $C_\gamma$ as in \eqref{eq:Cgamma}, $s$ as in C4, and $C_m$ as in \ref{A:mtry}.
Given a set of signed features $S^\pm \subset [p]\times \{-1, +1\}$, for the RF algorithm from Section \ref{subsec:RFalgorithm} it holds true that,
\begin{itemize}
    \item (Interaction lower bound) when $S^\pm$ is a union signed interaction as in Definition \ref{def:unionInter}, we have
    \[ \mathrm{DWP}_{\epsilon}(S^\pm) \geq  2^{- |S^\pm|} - b(\epsilon) - r_n(\cD, \epsilon); \]
    \item (Non-interaction upper bound) when $S^\pm$ is not a union signed interaction, then, 
      \[ \mathrm{DWP}_{\epsilon}(S^\pm) \leq 2^{-|S^\pm|}\left( 1  -   \frac{ C_m^{s}}{2}\right)  + r_n(\cD, \epsilon), \]
      with
      \[r_n(\cD, \epsilon) \pto 0 \quad \text{ as } n \to \infty,\]
      where $\pto$ denotes convergence in probability.
\end{itemize}
\end{theorem}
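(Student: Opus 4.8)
\emph{Reduction to a population forest.} The plan is to decouple the two sources of difficulty — finite-sample fluctuations and the combinatorics of random paths — by first replacing the data-driven forest with an idealized ``population forest'' in which every empirical impurity decrease $\Delta_I^n(t)$ is replaced by its population limit and every empirical best split lands at the population-optimal threshold, carrying all discrepancies in the remainder $r_n(\cD,\epsilon)$. For this I would prove a uniform concentration statement: with probability $1-o(1)$ over $\cD$, at every node $t$ arising in tree-growing, $\Delta_I^n(t)$ is within $o(1)$ of $\Delta_I(t):=\tfrac{\mu(R_{t,l})\,\mu(R_{t,r})}{\mu(R_t)^2}\big(\E[\E(Y\mid X)\mid X\in R_{t,l}]-\E[\E(Y\mid X)\mid X\in R_{t,r}]\big)^2$ and every empirical best split is within $o(1)$ of its population value. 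Two facts keep this manageable: impurity decreases along any root-to-leaf path telescope to at most $\va(Y)\le 1$ (by C2), so each path carries at most $1/\epsilon$ nodes with $\Delta_I^n>\epsilon$ and hence only $O_\epsilon(1)$ ``significant'' nodes matter; and C4 bounds the number of hyper-rectangles that can occur in the first $O_\epsilon(1)$ significant splits by $\mathrm{poly}(p)$, which is beaten by the exponential concentration tail (A2 prevents node sample sizes from collapsing too fast so the tail is genuinely exponential). Finally, A1 and the lower bound on $\mt$ in A3 give that the \emph{random} path $\cP$ is, with probability $1-o(1)$, long enough that every basic interaction $S_j$ gets resolved (all its features decided) before the leaf. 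All these $o(1)$ terms are uniform over $S^\pm$ and are collected into $r_n(\cD,\epsilon)\pto 0$; henceforth I argue on the population forest.

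\emph{Structural lemma and the interaction lower bound.} Using C1 (feature independence) and C3 (disjointness), I would show that for any node $t$ of a population tree and any still-undecided feature $k$ of a partially active interaction $j$ (where ``undecided'' forces the $k$-marginal of $R_t$ to still be $[0,1]$, since a split on $k$ is only ever made at $\gamma_k$, after which $k$ is decided), a split on $k$ at $\gamma_k$ is the only $S_j$-split with positive impurity decrease, has impurity decrease $\gamma_k(1-\gamma_k)\beta_j^2\big(\prod_{l\in U_j(t)\setminus\{k\}}\gamma_l\big)^2\ge c(C_\beta,C_\gamma,s)>0$, and, if the path takes the ``$>$'' branch there, leaves interaction $j$ constant below that node; hence on any path that never takes ``$>$'' at a significant $S_j$-node, the significant $S_j$-nodes are in bijection with the features of $S_j$. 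Now let $S^\pm=\bigcup_{j\in\cI}S_j^-\cup\bigcup_{j\in\cI_s}\{(k_j,b_j)\}$ and let $E$ be the event over $(\cP,T)$ that the random path follows ``$\le$'' at every significant node splitting on a still-undecided feature of an $S_j$, $j\in\cI$, and follows the $b_j$-branch at the significant node splitting on $k_j$, $j\in\cI_s$. On $E$, and off the exceptional sets of the reduction, each element of $S^\pm$ lands in $\hat\cF_\epsilon$; the path's only constrained choices on $E$ are at these nodes, of which there are exactly $\sum_{j\in\cI}|S_j|+|\cI_s|=|S^\pm|$ (by C3), and every other choice is free. Since the $\pm$ choices are i.i.d.\ fair coins independent of $T$, an optional-skipping argument gives $P_{(\cP,T)}(E\mid\cD)=2^{-|S^\pm|}$, so $\mathrm{DWP}_\epsilon(S^\pm)\ge 2^{-|S^\pm|}-b(\epsilon)-r_n(\cD,\epsilon)$, where $b(\epsilon)$ is the explicit increasing function with $b(\epsilon)\to 0$ as $\epsilon\to 0$ that absorbs the possibility that some relevant $S_j$-split has impurity decrease in $(0,\epsilon]$ and is therefore not recorded — an event that is empty once $\epsilon$ is small relative to the scale $C_\beta^2C_\gamma^{2s-1}$.

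\emph{Non-interaction upper bound.} The inequality $\mathrm{DWP}_\epsilon(S^\pm)\le 2^{-|S^\pm|}$ is Theorem \ref{theo:mainResult_general_upper_bound}. Assume $S^\pm$ is not a union signed interaction (Definition \ref{def:unionInter}). If $S^\pm$ contains a noisy feature, a conflicting pair $(k,-1),(k,+1)$, or a signed feature $(k,+1)$ with $k$ in a multi-feature $S_j$ together with another feature of $S_j$, then $S^\pm\subseteq\hat\cF_\epsilon$ is impossible off the $r_n$-event and the bound is immediate. Otherwise there is a multi-feature interaction $S_j$ and a feature $k^*\in S_j$ (one that $S^\pm$ either omits or requires with the ``$+1$'' sign) such that, whenever $k^*$'s significant node is the first $S_j$-node on the path, recording the features of $S_j$ demanded by $S^\pm$ forces the path to follow ``$\le$'' at $k^*$'s node (taking ``$>$'' there kills $S_j$ too early). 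By the \emph{upper} bound on $\mt$ in A3 and the independence of feature subsampling across nodes, with probability at least $C_m^{|S_j|}\ge C_m^s$ over the tree randomness the node that resolves $S_j$ samples $k^*$ but no other feature of $S_j$, making $k^*$'s node the first $S_j$-node; on such trees the conditional probability of $S^\pm\subseteq\hat\cF_\epsilon$ is at most $\tfrac12\cdot 2^{-|S^\pm|}$, while on all other trees it is at most $2^{-|S^\pm|}$. Averaging gives $\mathrm{DWP}_\epsilon(S^\pm)\le 2^{-|S^\pm|}\big(1-\tfrac{C_m^s}{2}\big)+r_n(\cD,\epsilon)$.

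\emph{Main obstacle.} The delicate part is the uniform finite-sample control behind $r_n$: because tree-growing is adaptive, the hyper-rectangle at a node is itself data-dependent, so concentration of $\Delta_I^n$ and of the empirical split locations must hold simultaneously over \emph{all} hyper-rectangles that could arise in the $O_\epsilon(1)$ significant splits and down to the deepest relevant node, where sample sizes are smallest; this is where C2, C4, A1 and A2 are used together. After that reduction, both bounds reduce to transparent statements about fair random walks on the population trees.
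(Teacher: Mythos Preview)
Your overall architecture --- reduce to a population forest via uniform concentration, define an oracle set of ``desirable'' signed features on each path, and run a fair-coin argument on the path's branch choices --- is exactly the paper's. Two steps in your sketch do not go through as written.

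\textbf{Non-interaction upper bound.} You claim that with probability at least $C_m^{|S_j|}$ over the tree randomness, ``the node that resolves $S_j$ samples $k^*$ but no other feature of $S_j$, making $k^*$'s node the first $S_j$-node.'' But having $k^*$ and no other $S_j$-feature in the candidate set does not force the split to land on $k^*$: a feature of a \emph{different} interaction $S_{j'}$ may also be in the candidate set and win (its impurity decrease can dominate that of $k^*$), in which case $k^*$ is not the first $S_j$-split on the path. The paper closes this by anchoring at the \emph{root node} specifically and requiring the root's candidate set to contain the chosen feature $k$ and \emph{no other signal feature whatsoever} (not merely no other $S_j$-feature); on that event, intersected with the high-probability event that every signal feature beats every noise feature at the root, the root must split on $k$. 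Bounding the probability of that stricter event is what yields the constant $C_m^s$, and it uses both sides of A3, not only the upper bound on $\mt$.

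\textbf{The term $b(\epsilon)$.} You describe the event ``some relevant $S_j$-split has impurity decrease in $(0,\epsilon]$'' as \emph{empty} once $\epsilon$ is small relative to $C_\beta^2C_\gamma^{2s-1}$. It is never empty: the population impurity decrease of a desirable feature at node $t$ is proportional to $\mu(R_t)$, and along the random path arbitrarily many noise-feature splits can occur before a given desirable feature is ever offered, driving $\mu(R_t)$ --- and hence the impurity decrease --- below any fixed $\epsilon$. What $b(\epsilon)$ bounds is the \emph{probability} (over tree and path randomness) that this happens before all desirable features are resolved: by A2 one can descend at most $\log\tilde\epsilon/\log C_\gamma$ levels before $\mu(R_t)<\tilde\epsilon:=4\epsilon/(C_\beta^2C_\gamma^{2s-1})$, and a Hoeffding bound on the binomial count of levels at which all $s$ signal features appear in the candidate set produces the exponent $C_m^{2s}/\log(1/C_\gamma)$. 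Without this mechanism you cannot recover the explicit form of $b(\epsilon)$ in the statement. A related issue affects your concentration step: the telescoping count ``at most $1/\epsilon$ significant nodes'' does not bound the number of candidate hyper-rectangles, because the rectangle at a significant node depends on \emph{all} prior splits, including arbitrarily many near-zero-impurity noise splits; the paper instead proves uniform convergence of $\Delta_I^n$ over the entire class of balanced rectangles via a growth-function (VC-type) argument, which sidesteps the adaptivity of tree-growing altogether.
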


\textbf{Proof Sketch}:  
The detailed proof of Theorem \ref{theo:mainResult} is deferred to Section \ref{theo:mainResult}. It has two major parts: first, showing the assertion for the idealized population case and second, extending the population case to the finite sample case. 

In the first part, we define a population version of the set $\hat{\cF}_\epsilon$, which we denote as $\cF$.
 The set $\cF$ only contains \textit{desirable features}, which are features of a path $\cP$ that correspond to a positive decrease in impurity if the RF gets to see the full distribution $P(X, Y)$ (not just a finite sample $\cD$). Note that desirable/non-desirable features are different from signal/noisy features. The definition of desirable/non-desirable features depends on the concerned path in a tree. A noisy feature is always a non-desirable feature but a signal feature can become a non-desirable feature when it has been split in the path. See Definition \ref{Def:desirable_features}.
The set $\cF$ is an oracle, in the sense that its construction depends on the true underlying LSS model.
This is in contrast to the set $\hat{\cF}_\epsilon$, which can be computed for any given path from a tree of RF.
Given this definition of $\cF$,
a sketch of the proof of the major assertions of Theorem \ref{theo:mainResult_general_upper_bound}
 and \ref{theo:mainResult} is as follows: 
\begin{enumerate}
    \item When a set of signed features $S^\pm$ appears in $\cF$, this implies that every time a signed feature $(k, b) \in S^\pm$ appears on the way from the root node to the leaf, the splitting direction implied by $b$ was selected for $\cP$, which gives rise to the general upper bound of $\mathrm{DWP}_{\epsilon}(S^\pm) \leq 2^{-|S^\pm|}$ (see Theorem \ref{theo:mainResult_general_upper_bound}).
    \item If $S^\pm$ is a union interaction, then (assuming all leaf nodes of the tree are pure) a correct splitting direction for each of its features already implies that $S^\pm$ appears on $\cP$ and thus, $\mathrm{DWP}_{\epsilon}(S^\pm) \approx 2^{-|S^\pm|}$ (see first part of Theorem \ref{Theo:mainResult_final} below).
    \item If $S^\pm$ is not a union interaction, then there will always be the possibility that, although every split for an encountered feature which is an element of $S^\pm$ was done in the correct direction, some of the features in $S^\pm$ were just never encountered and therefore, a correct splitting direction does not imply that $S^\pm$ appears on $\cP$, hence $\mathrm{DWP}_{\epsilon}(S^\pm) < 2^{-|S^\pm|}$ (see second part of Theorem \ref{Theo:mainResult_final}).
\end{enumerate}

In the second part of the proof,  we show that the observed set $\hat{\cF}_\epsilon$ and the oracle set $\cF$ are the same with probability going to 1 as $\epsilon$ 
goes to zero
%} 
and $n$ goes to infinity. That would be nice and easy if a tree grown using finite samples will converge to a tree grown using the population in terms of the splitting features and thresholds when sample size tends to infinity. However, that is not true. The obstacle is that, when a node splits on a non-desirable feature, since all the thresholds yield the same impurity decrease in the population case, the threshold selected via finite samples can deviate from the threshold via the population no matter how many samples are used. Thus, we need to carefully analyze desirable features and non-desirable features separately based on uniform convergence results. \qed 

\textbf{Remark 1}: Theorems \ref{theo:mainResult_general_upper_bound} and \ref{theo:mainResult} demonstrate that recovery of interactions becomes exponentially more difficult as the size of an interaction increases. An interaction $S^\pm$ corresponds to a region of size $O(2^{-|S^\pm|})$, which means the sample size must be much larger than $2^{|S^\pm|}$ to have enough samples in that region. Also, the DWP at an appropriate level $\epsilon$ of a basic interaction $S^\pm$ is $2^{-|S^\pm|}$. To have a consistent estimate, the number of independent paths should be much larger than $2^{|S^\pm|}$. Thus, when one wants to recover an interaction of size $s$, the number of samples and the number of trees must be much larger than $2^s$. That shows the intrinsic difficulty of estimating high order interactions.

Using the conclusions in Theorem \ref{theo:mainResult}, one can show that LSSFind (Algorithm \ref{Algo:1}) can consistently recover all the basic interactions from the LSS model, as stated in Theorem \ref{Theo:mainResult_final}.
\begin{theorem}\label{Theo:mainResult_final}
Let the output of LSSFind (Algorithm \ref{Algo:1}) be $\mathscr S (\mt, \epsilon, \eta, s_{\max})$. Under the same settings as in Theorem \ref{theo:mainResult}, as long as $\mt, \epsilon, \eta$ satisfies the assumptions in Theorem \ref{theo:mainResult} and the following condition:
\begin{align}\label{eq:codEtaEps}
  2^s \cdot b(\epsilon) <\eta < \frac{[C_m]^s}{2},
\end{align}
with $b(\epsilon)$ defined in \eqref{eq:epsilon_tilde} and $C_m$ in Assumption \ref{A:mtry}, then, with probability approaching 1 as $n\to \infty$, $\mathscr S$ is a superset of the basic signed interactions with size at most $s_{\max}$ and a subset of union signed interactions.
In particular, if we define 
\begin{align*}
    \mathscr U = \{S\in \mathscr S\mid \text{There is no set $S'\in \mathscr S$ s.t. }, S\subsetneq S'\},
\end{align*}
then $\mathscr U$ equals the set of basic signed interactions of size at most $s_{\max}$.
\end{theorem}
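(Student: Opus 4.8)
The plan is to read $\mathscr S$ off directly from Theorem~\ref{theo:mainResult}, by tracking the single quantity that LSSFind (Algorithm~\ref{Algo:1}) thresholds, namely $2^{|S^\pm|}\mathrm{DWP}_\epsilon(S^\pm)$, and then to run a short combinatorial argument based on non-overlap (C3). The two halves of the sandwich condition \eqref{eq:codEtaEps} enter in complementary ways: $2^{s}b(\epsilon)<\eta$ will certify that the desirable sets clear the threshold $1-\eta$, while $\eta<C_m^{s}/2$ will certify that the undesirable sets fall short.

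If $S^\pm$ is a union signed interaction with $|S^\pm|\le s_{\max}$ then $|S^\pm|\le s$, since by C3 the $S_j$ are disjoint and each basic interaction contributes either all of $S_j$ or (when $|S_j|=1$) a single signed feature; hence the interaction lower bound of Theorem~\ref{theo:mainResult} gives
\[
  2^{|S^\pm|}\mathrm{DWP}_\epsilon(S^\pm)\;\ge\;1-2^{|S^\pm|}\bigl(b(\epsilon)+r_n(\cD,\epsilon)\bigr)\;\ge\;1-2^{s}b(\epsilon)-2^{s}r_n(\cD,\epsilon),
\]
which exceeds $1-\eta$ on the event $E_1:=\{2^{s}r_n(\cD,\epsilon)<\eta-2^{s}b(\epsilon)\}$; since $\eta-2^{s}b(\epsilon)>0$ is a fixed constant and $r_n(\cD,\epsilon)\pto 0$, we have $P(E_1)\to 1$. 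Conversely, if $|S^\pm|\le s_{\max}$ and $S^\pm$ is not a union signed interaction, the non-interaction upper bound of Theorem~\ref{theo:mainResult} gives $2^{|S^\pm|}\mathrm{DWP}_\epsilon(S^\pm)\le 1-C_m^{s}/2+2^{s_{\max}}r_n(\cD,\epsilon)$, which falls below $1-\eta$ on $E_2:=\{2^{s_{\max}}r_n(\cD,\epsilon)<C_m^{s}/2-\eta\}$, again with $P(E_2)\to 1$. On $E_1\cap E_2$ (probability $\to 1$), $\mathscr S$ therefore contains every basic signed interaction of size $\le s_{\max}$ and is contained in the collection of union signed interactions --- indeed $\mathscr S$ equals the collection of all union signed interactions of size $\le s_{\max}$, which is the first assertion. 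This step only uses that the remainder $r_n(\cD,\epsilon)$ in Theorem~\ref{theo:mainResult} does not depend on $S^\pm$, so that $E_1$ and $E_2$ control all (for each fixed $n$, finitely many) candidate sets simultaneously; one should also record the size-one sign ambiguity, namely that both $\{(k,-1)\}$ and $\{(k,+1)\}$ count as basic signed interactions and both satisfy the lower bound above.

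For the ``in particular'' claim I would, working on $E_1\cap E_2$, characterize the members of $\mathscr S=\{\text{union signed interactions of size}\le s_{\max}\}$ picked out by the definition of $\mathscr U$. Here non-overlap (C3) is indispensable: because $S_1,\dots,S_J$ are pairwise disjoint, each basic signed interaction is \emph{indivisible} (no non-empty proper subset of it is a union signed interaction) and any two distinct ones are $\subsetneq$-incomparable. A short induction on the number $|\cI\cup\cI_s|$ of basic pieces composing a union signed interaction then shows that the $\mathscr U$-filtering keeps exactly the one-piece sets, namely the basic signed interactions of size $\le s_{\max}$: a composite union signed interaction is not kept because removing one of its basic pieces gives a strictly smaller union signed interaction still lying in $\mathscr S$, while each basic signed interaction is kept by indivisibility together with $\mathscr S\subseteq\{\text{union signed interactions}\}$. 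The genuinely hard work is all inside Theorems~\ref{theo:mainResult_general_upper_bound} and \ref{theo:mainResult}, which I take as given; conditional on those, the remaining obstacle is just the bookkeeping above, most delicately the verification that no composite union signed interaction survives the $\mathscr U$-filtering, which is precisely the point at which C3 cannot be relaxed.
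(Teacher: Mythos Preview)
Your argument for the first assertion is correct and matches the paper's proof: both apply the two bounds of Theorem~\ref{theo:mainResult} directly, using $2^{s}b(\epsilon)<\eta$ to place (basic) interactions above the threshold and $\eta<C_m^{s}/2$ to place non-interactions below it. You in fact prove slightly more than the paper records---that on $E_1\cap E_2$ the set $\mathscr S$ \emph{equals} the collection of all union signed interactions of size $\le s_{\max}$, not merely the sandwich---and this is correct, since the interaction lower bound in Theorem~\ref{theo:mainResult} applies to every union signed interaction, not only basic ones.

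For the ``in particular'' claim about $\mathscr U$, note first that the paper does not give a proof of it. Your combinatorial argument, however, has the inclusion direction reversed. The definition $\mathscr U=\{S\in\mathscr S:\text{no }S'\in\mathscr S\text{ with }S\subsetneq S'\}$ selects the $\subseteq$-\emph{maximal} elements of $\mathscr S$, but your reasoning (``a composite is not kept because removing one of its basic pieces gives a strictly smaller union signed interaction still lying in $\mathscr S$''; ``each basic signed interaction is kept by indivisibility'') is precisely the argument that the $\subseteq$-\emph{minimal} elements of $\mathscr S$ are the basic signed interactions. With the definition as written and your own identification $\mathscr S=\{\text{union signed interactions of size}\le s_{\max}\}$, maximal elements need not be basic: for two disjoint order-two interactions $S_1^-,S_2^-$ and $s_{\max}\ge 4$, the unique maximal element of $\mathscr S$ is $S_1^-\cup S_2^-$, so $\mathscr U\neq\{S_1^-,S_2^-\}$. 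Either the intended definition of $\mathscr U$ is with $S'\subsetneq S$ (minimal elements)---in which case your argument via C3 goes through verbatim---or the claim as stated does not hold and your argument does not establish it. You should flag this discrepancy rather than silently switch directions.
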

Note that to recover all the basic interactions, $s_{\max}$ needs to be larger than or equal to the order of all the basic signed interactions but the latter is unknown as we do not know the underlying LSS model.

\textbf{Proof:}
If $S^\pm$ is not a union signed interaction,
then it follows from the second part of Theorem \ref{theo:mainResult} and $\eta < [C_m]^s/2$ that
$2^{|S^\pm|} \cdot \mathrm{DWP}_{\epsilon}(S^\pm)  < 1 -\eta, $
with probability approaching $1$ as $n\to\infty$.
Thus, $\mathscr S$ is a subset of union signed interactions.
If $S^\pm$ is a basic signed interaction of size at most $s_{\max}$, then it follows from the first part of Theorem \ref{theo:mainResult} and the fact that $ 2^s \cdot b(\epsilon) < \eta $ that 
$2^{|S^\pm|} \cdot \mathrm{DWP}_{\epsilon}(S^\pm)  \geq 1  -\eta,$
with probability approaching $1$ as $n\to\infty$.
Thus, $\mathscr S$ is a superset of the basic signed interactions with size at most $s_{\max}$. \qed 

\textbf{Remark 2}: 
One important assumption in our theorem is the sparsity of signal features. 
If there are many ``weak" signal features, it is very hard for RF to work well. For RF, at each node of a tree, only one feature is used. That means the total number of features used along each path is limited by the depth of the tree, which is usually of order $O(\log n)$.  
For our assertions of Theorem \ref{theo:mainResult} the hard threshold $\epsilon$ in the set $\hat{\cF}_\epsilon$ has the purpose to select the signal features.
Clearly, the choice of an appropriate value of $\epsilon$ is hard in practice.
The iterative Random Forest fitting procedure in iRF \cite{Basu2017} (which uses joint prevalence on decision paths in RF to recover interactions, similar as suggested by Theorem \ref{theo:mainResult}) filters noisy features not with a hard, but with a soft thresholding procedure: it grows several RF iteratively and samples features at each node according to their feature importance from the previous iteration.
In that way, one does not need to chose a single hard threshold, which leads to a much more practical algorithm. 
Unfortunately, such an iterative soft thresholding makes theoretical analysis much harder, which is why we restrict to the hard threshold for the theoretical analysis in this work.

One of the remarkable aspects of the result in Theorem \ref{Theo:mainResult_final} is that the range of $\eta$ is independent of any model coefficients in the LSS model (that is, the linear $\beta$ coefficients and the $\gamma$ thresholds). 
For sufficiently small $\epsilon$, it only depends on the number of signal features $s$ and the bound of $\mt$, i.e., $C_m$, and nothing else.
In a sense, this shows that the tree ensemble of RF contains the \textit{qualitative} or discrete-set information of \textit{which features interact with each other}, independently of the \textit{quantitative} information about \textit{what are the numerical parameters or model coefficients} in the LSS model.

Another interesting aspect about the results from Theorem \ref{Theo:mainResult_final} is that it sheds some light on the influence of $m_{try}$ on the interaction recovery performance of RF.
For the third assertion in Theorem \ref{theo:mainResult} we actually show that
\begin{align*}
    &\mathrm{DWP}_{\epsilon}(S^\pm) \leq r_n(\cD, \epsilon)\; +\\ 
    &0.5^{|S^\pm|}\left( 1  -   0.5 \min_{k \in \cup_j S_j}P(\text{root node splits on feature }k)\right).
\end{align*}
When $m_{try}$ is too large, $\min_{k \in \cup_j S_j}P( \text{root node splits on feature }k)$ can get very small, as particularly strong features (large initial impurity decrease) can mask weaker features.
As an extreme example, consider the situation where $m_{try} = p$ and thus, the root node gets to see all the features. In that case, the single feature which has the highest impurity decrease, say $X_1$, will \textit{always} appear at the root node and hence, for $S^\pm = \{(1, -1)\}$ or $S^\pm = \{(1, +1)\}$ one will get $\mathrm{DWP}_{\epsilon}(S^\pm) = 2^{-|S^\pm|} = 0.5$, independent of whether $S^\pm$ is an interaction or not.
This shows that when $m_{try}$ is too large, DWPs corresponding to false interactions can attain the universal upper bound $ 2^{-|S^\pm|} $, which leads to false positives in terms of interaction recovery.
On the other hand, when $m_{try}$ is too small, for a signal feature $k \in \cup_j S_j$ it can take a long time until it gets selected into the candidate feature set at a node.
In particular, for finite sample, it can happen that the tree reaches purity due to lack of samples without having split on any of the signal features.
Hence, the reasoning of Theorem \ref{theo:mainResult}, namely that \textit{correct split direction} $+$ \textit{pure path} implies that a union interaction appears on the path does not hold anymore. 
This can lead to union interactions having significantly smaller DWP than the universal upper bound $ 2^{-|S^\pm|} $, i.e., false negatives in terms of interaction recovery.

\section{LSSFind and simulation results}\label{sec:simulations}
In this section, motivated by our theoretical results in the previous section, we evaluate LSSFind empirically in terms of its ability to recover interactions\footnote{Source code is available at https://github.com/Yu-Group/interaction\_selection}.
Simulated experiments are carried out to assess the ability of LSSFind to correctly recover interactions from the LSS model, even when some of the LSS model assumptions are violated.

In LSSFind, one needs to search over all possible sets with size at most $s_{\mathrm{max}}$ to obtain the final result. That is computationally very intensive. 
One more efficient way is to only look for sets with size at most $s_{\mathrm{max}}$ and also with \begin{align} \label{Eq:filter_condition}
&\mathrm{DWP}_\epsilon(S^\pm) \geq (1-\eta)\cdot 2^{-s_{\mathrm{max}}}.
\end{align}
which implies that we don't need to search over all possible sets with 
sizes at most $s_{\mathrm{max}}$; instead, we need to search only for sets whose $DWP_{\epsilon}$'s are larger than $(1-\eta)\cdot 2^{-s_{\mathrm{max}}}$. Because many sets with 
sizes at most $s_{\mathrm{max}}$ are filtered out, this significantly reduces the search space.
%}
We use the FP-growth algorithm \cite{han2000mining} to obtain those sets of signed features which have a DWP higher than some threshold. Note that DWP requires infinite number of trees. To approximate DWP, we use 100 trees in the simulation. Since each tree contains thousands of paths, we have hundreds of thousands of paths to estimate the DWP for.

\subsection{Simulated data from LSS models}\label{subsec:simSection}
{
In the following we present simulation results, where we generated data $\cD$ from the LSS model for different number and order of basic interactions and different signal-to-noise (SNR) ratios.
We find that LSSFind recovers the true interactions from the LSS model with high probability, whenever the overall number of basic interactions and their orders are small.
}

More precisely, we consider $p = 20$ features and $n = 1,000$ sample, where each feature $X_j$ is generated from an uniform distribution $U([0,1])$, independent from one another. 
The number of basic interactions is denoted as $J$ and the order of each interaction is denoted by $L$. We consider the same threshold $\tau$ for all features. The noise is Gaussian with variance $\sigma^2$ and the response is:
\begin{align}\label{eq:simulations_model_lss}
    Y = \sum_{j=1}^J \prod_{k=(j-1)\cdot L + 1}^{k\cdot L}\1({X_{k} < \tau}) + \mathcal{N}(0, \sigma^2).
\end{align}
We consider different values for $J$, $L$, and $\sigma^2$, namely, $J=1,2$, $L=2,3, 4$, and $\sigma^2$s such that the signal-to-noise ratios (SNR) is $0.5, 1, 2$, or $5$. For a given $J$ and $L$, the threshold $\tau$ is chosen such that about 50 percent of samples fall into the union of hyper-rectangles, that is, $\cup_{j=1}^J\cap_{k=(j-1)\cdot L + 1}^{j\cdot L}\{X_{k} < \tau\}$. As we know that the number of samples falling into $\cup_{j=1}^J\cap_{k=(j-1)\cdot L + 1}^{j\cdot L}\{X_{k} < \tau\}$, which can also be roughly thought as the label imbalance, has a high impact on the results, keeping this number the same across different simulation settings makes sure that the simulation outcome are more comparable. The results are averaged across 40 independent Monte Carlo runs.
We grow RF using the scikit-learn package with $100$ trees. We apply LSSFind with parameters $\eta = 0.01$, $\epsilon = 0.01$, and $s_{\max} = L+1$. Recall that we use \eqref{Eq:filter_condition} to select candidate interactions. If $s_{\max}$ is set to $L$, the condition \eqref{Eq:filter_condition} would be too restrictive for challenging situations, such as when LSS model is violated, and 
LSSFind can end up finding no interactions.
Given a set $\mathscr S^*$ of $K$ true basic signed interactions from the respective LSS model and output from LSSFind $\mathscr S$, we evaluate their proximity based on their Jaccard distance:
\begin{align}\label{eq:proxScore}
\mathrm{score}(\mathscr S^*, \mathscr S) = \frac{|\mathscr S^*\cap \mathscr S|}{|\mathscr S^*\cup \mathscr S|}.
\end{align}
Note that any element in $\mathscr S^*$ and $\mathscr S$ is a set of signed features. This score gives no credit for partial recovery: If one interaction $S^\pm$ in $\mathscr S^*$ is $\{(1,+1), (2,+1)\}$, there will be no credit for $\mathscr S$ if it contains subsets of $S^\pm$ such as $\{(1,+1)\}$ or same features with different signs such as $\{(1,+1), (2,-1)\}$. While this score can be overly restrictive for practical problems, it is suitable for our simulation because we would like to evaluate whether LSSFind can consistently recover the interactions in the LSS model.
The simulation results are shown in Figure S1. %\ref{fig:Sim1}. 
In general, the performance of LSSFind sharply degrades when the number of basic interactions and the order of interactions increases.  
For $K=1$ and $L=2,3, 4$ LSSFind almost always recovers the correct basic signed interactions.
For $K = L = 2$ it mostly recovers the correct basic signed interactions, except for small SNR.
When $K=2$ and $L=3,4$, LSSFind rarely recover the basic signed interactions, for this simulation setup, resulting in a score of almost zero.
Note that this is consistent with our results in Theorem \ref{theo:mainResult}, which indicates the problem is much harder for more interactions and higher-order interactions.
We also explored the high-dimensional case. 
When $p=20, 50, 100, 200$ and $n=1000\cdot(1 + \log (p / 20)$, the score for LSSFind is shown in Figure \ref{fig:high_dimension}. 
The scaling of $p$ and $n$ is chosen to make sure $\log p / n \approx 0.001$ and also when $p=20$, $n$ will be $1000$, which corresponds to our previous numerical setting for better comparison. 
Recall that Theorem \ref{theo:mainResult} and \ref{Theo:mainResult_final} require condition $\log(p)/n\to 0$, as stated in Condition C4. 
We also note that $\log(p)/n\to 0$ is commonly imposed when analyzing lasso problems, too \cite{Lasso1996,zhao06a,hastie2015statistical}. 
As can be seen in Figure \ref{fig:high_dimension}, 
the score increases and approaches to 1 as the dimension $p$ increases.
This is consistent with Theorem \ref{Theo:mainResult_final} which shows that LSSFind can recover the underlying interactions for the high dimensional case.

\subsection{Robustness to LSS model violations}\label{subsec:sim_robust}

{
In the following, we present simulation results for LSSFind when the data is generated from a misspecified LSS model, that means, some of the LSS model assumptions are violated.
We find that LSSFind deteriorate when the LSS model is violated.
We consider a misspecified LSS model with $SNR = 5$ and 2 order-2 interactions with $p=20$ features and $n=1,000$ samples, analog as in Figure \ref{fig:Sim1}, second row, first column, third bar. 
We consider the following violations of LSS model assumptions: 
}
    \begin{itemize}
        \item \textbf{Overlapping interactions:} different basic interactions have overlapping features. When $\textrm{overlap}=1$, the basic interactions are $((1, -1), (2, -1))$, $( (2,-1)$, $(3, -1)$).
        \item \textbf{Correlated features:} Different features are correlated instead of independent. When $\text{corr}=\alpha$, the correlation between feature $j_1$ and $j_2$ is $\alpha^{|j_1 - j_2|}$.
        \item \textbf{Heavy-tail noise:} the noise follows a Laplace or Cauchy distribution, which have heavier tails than (sub-)Gaussian distributions. The noise is normalized such that the SNR is 50.
    \end{itemize}
Results of LSSFind are shown in Figure \ref{fig:LSSviolation}.
For heavy-tail noise, we observe a gradual drop in performance.
For the correlated feature case, one can see that 
LSSFind has reasonable performance when the correlation is close to zero but its performance deteriorates when the correlation is high.
Similar, for the overlapping feature case the performance worsens.

\subsection{Empirical comparison between LSSFind and iRF}\label{subsec:compareIRF}
Our original motivation to study DWP in RF tree ensemble came from the strong positive empirical evidence of iRF \cite{Basu2017,Kumbier2018}.
There are three major reasons why the full iRF procedure is hard to analyze theoretically:
First, the iterative re-weighting in iRF is based on the feature importance metric of \textit{mean decrease in impurity (MDI)}. 
Analyzing MDI for the RF algorithm is a challenging task on its own.
In particular, MDI of noisy features in deep trees are known to have a bias, \cite{zhou2020a,li2019,loecher2020unbiased}, which may propagate through various iterations in iRF and makes a theoretical analysis very challenging.
Second, the iRF procedure selects interactions from the paths of the RF tree ensemble via the RIT algorithm \cite{shah2014random}.
Thereby, individual paths are weighted according to the number of observations which fall into their respective leave nodes.
This means that the selected feature interactions of iRF cannot be derived from the RF tree ensemble directly, but depend on the data in a more complex way.
Third, the outer stability layer of iRF, where interactions are evaluated based on their consistent appearance among several bootstrap replications of the procedure, adds an additional layer of complexity for theoretical analysis.

In order to still analyze the major aspects of iRF theoretically, we proposed the related LSSFind algorithm.
Instead of iterative re-weighting via MDI, LSSFind introduces a single hard-threshold on the impurity index at individual tree nodes.
Moreover, instead of selecting interactions via RIT, LSSFind is based on DWP, which is derived from the tree ensemble directly, without an additional data dependent sampling scheme.
In other words,
although a high DWP in LSSFind does not exactly correspond to the RIT interaction selection strategy employed in iRF, they both build on similar high-level quantities, namely, sets of stable features which \textit{often} appear together on decision paths in an RF tree ensemble. 
Therefore, our theoretical results on DWP and LSSFind provide evidence that the general interaction discovery strategy of iRF is theoretically justified.
In the following, we complement our theoretical findings about LSSFind with an empirical comparison between iRF and LSSFind.

We consider the same simulation setting as in Section \ref{subsec:simSection}.
However, we replace the very strict performance measure in \eqref{eq:proxScore} by a weaker one\footnote{The results of iRF for the set-wise Jaccard distance in \eqref{eq:proxScore} are shown in Figure\ref{fig:LSSiRFcompare2}}.
Specifically, given a set $\mathscr S^*$ of $K$ true basic signed interactions from the respective LSS model and output from LSSFind and iRF, respectively, $\mathscr S$, we now evaluate their proximity based on:
\begin{align}\label{eq:proxScore2}
\overline{\mathrm{score}}(\mathscr S^*, \mathscr S) = \frac{|\{\cup_{S^- \in \mathscr S^*} S \} \cap \{\cup_{S^- \in \mathscr S} S \} |}{|\{\cup_{S^- \in \mathscr S^*} S \} \cup \{\cup_{S^- \in \mathscr S} S \} |}.
\end{align}
Note that \eqref{eq:proxScore2} corresponds to the Jaccard distance on the set of \emph{unsigned} features which appear in any of the detected interactions.
While the stricter metric in \eqref{eq:proxScore} is more appropriate to evaluate finite sample validity of Theorem \ref{theo:mainResult}, the relaxed version in \eqref{eq:proxScore2} is arguably of more practical interest. 
This is because it gives partial credit for interactions that are almost, but not perfectly, recovered. If $\overline{\mathrm{score}}(\mathscr S^*, \mathscr S)$ is high, it means the features in the discovered interactions overlap with the features in the true interactions, which would greatly narrow down the interaction search space and save tremendous effort for subsequent analysis for a practical problem.

For the iRF algorithm we used the signed iRF algorithm (siRF) from the Python iRF package \texttt{iRF}\footnote{https://github.com/Yu-Group/iterative-Random-Forest}, with default parameter settings and a threshold on iRF's stability score of 0.5 for interaction selection, as recommended in \cite{Kumbier2018}.
Simulation results are shown in Figure \ref{fig:LSSiRFcompare}.
When the LSS model as in \eqref{eq:simulations_model_lss} is relatively simple, for example, when it has only a single signed interaction (K=1) or only a single feature per signed interaction (L =1), iRF and LSSFind perform comparably (see fist row and second row first column of Figure \ref{fig:LSSiRFcompare}). 
However, when the LSS model gets more complex, with several additive interactions ($K > 1$) each having more than one signed features ($L > 1$) (see second row, second and third columns in Figure S3), iRF outperforms LSSFind in terms of the metric \eqref{eq:proxScore2}\footnote{In contrast, for the stricter performance metric \eqref{eq:proxScore}, which precisely captures the interaction detection property of Theorem \ref{theo:mainResult}, we note that LSSFind outperforms iRF, see Figure \ref{fig:LSSiRFcompare2}.}. 
In summary, we find that iRF outperforms LSSFind in situations where the underlying LSS model is more complex and when a flexible performance metric is chosen.
This appears to be consistent with the fact that the iRF algorithm has witnessed empirical success on specific domain data problems \citep{Basu2017}, whereas LSSFind was specifically constructed in such a way that it reflects our result in Theorem \ref{theo:mainResult}.

\section{Discussion} \label{sec:discussion}
Relevant statistics theory starts with a model that is a good approximation to reality. 
Thus, it is important to derive theoretical results under a model that is scientifically motivated. 
Our proposed LSS model class provides such a family that reflects the biological phenomena of biomelecule interacting through thresholding. 
Also, analyzing RF based algorithms under different models rather than the smoothness classes in the literature can give insights into their empirical adaptivity. 
Our results are the first to give a theoretical result that DWP of a set of features in an RF tree ensemble recovers high order interactions under the LSS model and reasonable conditions on the RF hyperparameters.
Moreover, the universality of interaction's DWP in LSS models gives insights into the general difference between quantitative (e.g., prediction accuracy) and qualitative (e.g., interaction recovery) information extraction. 
In scientific problems often the latter is of higher interest. 
Thus, this work narrows the gap between theory and practice for Boolean interaction discovery and is of general interest to the fields of statistics, data science, machine learning, and scientific fields such as genomics.

Our theoretical analysis also gives some insights of RF for tuning a crucial hyper-parameter $\mt$: 
Given an interaction with a fixed size, the non-interaction DWP upper bound in Theorem \ref{theo:mainResult} depends only on $C_m$ and $C_m$ is only constrained by $\mt$ (\ref{A:mtry}). Therefore, one can find an optimal $\mt$ that minimizes this upper bound. The optimal choice of $\mt$ turns out to be $\mt^\star = p \cdot ( 0.5  - s/(2(p-2))$. 
If one third of all features are signal features, that is, $s = p/3$, $\mt^\star$ recovers the default choice in standard RF implementations for regression, namely, $\mt^\star \approx p/3$.
However, when $p \gg s$, the optimal choice from our theoretical results corresponds to $\mt \approx p/2$, which suggests that with the presence of many noisy features, $\mt$ should be larger than $p/3$ as in the default choice. Further investigations through data inspired simulations and theoretical analyses are needed.

One might wonder whether the form of interaction defined by the LSS model constitutes a particularly \textit{difficult} or a particularly \textit{easy} form of feature interaction.
In general, there appears to be no clear (mathematical) answer to this question, as one cannot define what is meant by \textit{feature interaction} in a clear way for a generic (possibly discontinuous) regression function $f(X) = E(Y | X)$.
For example, it is easy to check that for any multivariate function $f: [0,1]^p -> \R$ one can find (possibly discontinuous) univariate functions $g, h_1,\ldots, h_p$, such that $f(x_1,\ldots, x_p) = g(h_1(x_1) + \ldots + h_p(x_p))$.  
We stress that the reason why we considered the LSS model in this work was not because it defines a particularly \textit{easy} form of interaction, but rather its biological relevance, as the thresholding relationships captured in the LSS model are observed in various biological data.

Although, the LSS model is motivated from biological phenomena, some of the assumptions which we made in order to derive our theoretical results might be difficult to justify directly in real-world-problems, in particular, the independence condition C1 and the non-overlapping interaction set condition C3.
Note, however, that in many application settings, it is possible to overcome these limitations by appropriate data preprocessing, e.g., decorrelating features (recall the discussion after condition C1).
Nevertheless, for future work it will be interesting to extend our results to a general LSS model (with possibly overlapping interaction sets and correlated features) or even interaction models beyond Boolean interactions, in order to further close the gap between theory and practice.

Finally, it will also be of interest to compare LSSFind and iRF with methods that, more generally, employ a ML black box model to extract interactions.
For example, when individual features are independent, as we assume in C1, one can use Monte Carlo methods \cite{saltelli2002} to estimate higher-order Sobol indices for the fitted ML model.
%}

\section{Proof of Theorem \ref{theo:mainResult_general_upper_bound} and Theorem \ref{theo:mainResult}}
\label{sec:proofoftheorem}
\subsection{Proof of the population case -- desirable features}
Recall that there are three different sources of randomness:
\begin{enumerate}
    \item ($\cD$) the randomness of the data $\cD$,
    \item ($T$) the randomness of the tree $T$, given the data $\cD$,
    \item ($\cP$) the randomness of the randomly selected path $\cP$, given the tree $T$.
\end{enumerate}
Note that, although the random path $\cP$ depends on all three sources of randomness (the data randomness, the tree randomness, and the additional path randomness), when we condition on the tree $T$, then the random path $\cP$ is independent of the data $\cD$.
In the first part of the proof, we will only consider the last two sources of randomness, namely, from the random tree and from the randomly selected path on the tree.
Also, recall that we define $S_j^-, S_j^+ \subset [p]\times \{-1,+1\}$ as the features in $S_j \subset [p]$ with $-$ and $+$ sign, respectively, that is
\begin{align}
    S_j^- = \{ (k, -1) \; : \; k \in S_j\} \subset [p]\times \{-1,+1\},\\
    S_j^+ = \{ (k, +1) \; : \; k \in S_j\} \subset [p]\times \{-1,+1\}.
\end{align}

For each node $t$ in a tree $T$, define $\dot \parent^\pm(t)$ to be the set of signed features used by the parents of $t$ in $T$ and $\dot \parent(t)$ to be the corresponding (unsigned) features. 
For any feature $j$, $(j, -)$ and $(j, +)$ can appear together in $\dot \parent^\pm(t)$. Furthermore, let $\parent^\pm(t)$ be a subset of $\dot \parent^\pm(t)$ by only including the signed feature that corresponds to the first split of the feature if a feature appeared multiple times in the path. 
As a result, for any feature $j$, at most one of $(j, +)$ and $(j, -)$ can appear in $\parent^\pm(t)$. 
Define $\parent(t)$ to be the set of (unsigned) features in $\parent^\pm(t)$. Because $\dot \parent^\pm(t)$ and $\parent^\pm(t)$ only differ in terms of feature signs, they correspond to the same set of features, i.e., $\dot \parent(t) = \parent(t)$.
Conditioned on a tree $T$, at every node $t$ of $T$ we now define the set of \textit{desirable} features with respect to the LSS model as follows.
\begin{definition}[Desirable features]\label{Def:desirable_features}
Define the \textit{desirable feature set} $U(t)\subset [p]$ to be 
\begin{align}\label{eq:desirableFeatures}
    U(t) \triangleq \left\{k\in [p] \; \Big| \; \exists \; j\in [J]\text{ s.t. }k\in S_j, \; S_j^+\cap \parent^\pm(t) = \emptyset\;\text{and}\; (k, -1)\not\in \parent^\pm(t)\right\}.
\end{align}
\end{definition}
Note that the set of desirable features $U(t)$ at a node $t$ is only defined w.r.t. some particular LSS model. In particular, it depends on the basic signed interactions $S_1^-,\ldots, S_J^-$.
Hence, for a given tree $T$ with node $t$, $U(t)$ is an oracle set, which cannot be computed from data.
The way to think about $U(t)$ is that it corresponds exactly to those set of features which would yield some impurity decrease if the tree was grown by seeing the full data distribution $P(X,Y)$ and hence, making every split at the correct split point.
Moreover, denote $t_{\leaf}$ to be the leaf node of $\cP$ and we define $\cF$ to be the desirable signed features of $F(t_{\leaf})$. That is, the signed features $k_t$ where for the node $t$ on the path $\cP$ we have $k_t \in U(t)$, i.e., 
\begin{align}\label{eq:defPF}
\begin{aligned}
    \cF(\cP) \triangleq \{ (k_t, b_t) \in F(t_{\leaf})  \; \Big| \;  k_t \in U(t), \text{ $t_{\leaf}$ is leaf node of $\cP$ }\} \subset [p]\times \{-1,+1\}.
    \end{aligned}
\end{align}
For notation simplicity, we use $\cF$ as the shorthand of $\cF(\cP)$.

Further, we define the event $\Omega_0$ to be that the desirable features are exhausted at the leaf node:
\begin{align}\label{eq:Omega0}
    \Omega_0 \triangleq  \{ U(t_{\leaf}) = \emptyset \text{ for the leaf node $t_{\leaf}$ of $\cP$}\}.
\end{align}

With these definitions we get the following lemma.
\begin{lemma}\label{lem:irrLN} For the event $\Omega_0$ in \eqref{eq:Omega0} it holds true that
\begin{align}\label{eq:irrLN}
   \Omega_0 \subset \bigcap_{j \in [J]} \{ S_j^- \subset \cF\}\cup \{S_j^+ \cap \cF \neq \emptyset\},
\end{align}
with $ \{ S_j^- \subset \cF\}\cap \{S_j^+ \cap \cF \neq \emptyset\} = \emptyset$.
\end{lemma}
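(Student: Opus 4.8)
The plan is to prove the inclusion pointwise, reducing everything to a deterministic statement about one fixed path $\cP$ in one fixed tree $T$, and to treat each basic interaction $S_j$ separately. The single model input needed here is the non-overlap assumption C3: since $S_1,\dots,S_J$ are pairwise disjoint, every feature lies in at most one $S_j$, so writing $U_j(t) := U(t)\cap S_j$ we get $U(t) = \bigcup_{j\in[J]} U_j(t)$ as a disjoint union, and hence $\Omega_0 = \{U(t_\leaf)=\emptyset\} = \bigcap_{j\in[J]} \{U_j(t_\leaf)=\emptyset\}$. It therefore suffices to fix $j\in[J]$ and show $\{U_j(t_\leaf)=\emptyset\} \subset \{S_j^-\subset\cF\}\cup\{S_j^+\cap\cF\neq\emptyset\}$, together with disjointness of the two events on the right.

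First I would record how $U_j$ evolves down the path. From $\parent^\pm(\mathrm{root})=\emptyset$ one has $U_j(\mathrm{root})=S_j$. Passing from a node $t$ on $\cP$ to its child changes $\parent^\pm$ only when the split feature $k_t$ occurs on $\cP$ for the first time, and then affects $U_j$ only if $k_t\in S_j$: if $k_t\in S_j$ is split in the $-1$ direction, $U_j$ loses exactly the element $k_t$; if $k_t\in S_j$ is split in the $+1$ direction, then $(k_t,+1)\in S_j^+\cap\parent^\pm$ and $U_j$ collapses to $\emptyset$ (and stays empty thereafter); otherwise $U_j$ is unchanged. Second, I would list the first occurrences of $S_j$-features along $\cP$, in order, as $(k_1,b_1),(k_2,b_2),\dots$ at nodes $t_1,t_2,\dots$, and unwind the definition \eqref{eq:desirableFeatures} to see that at $t_i$ we have $S_j^+\cap\parent^\pm(t_i)=\emptyset$ and $(k_i,-1)\notin\parent^\pm(t_i)$ — hence $k_i\in U(t_i)$ and so $(k_i,b_i)\in\cF$ — precisely as long as $b_1=\cdots=b_{i-1}=-1$, and that once some $b_\ell=+1$ no later $S_j$-feature is desirable. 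Third, on $\{U_j(t_\leaf)=\emptyset\}$ there are two mutually exclusive possibilities. Either some $b_i=+1$: taking the smallest such index $i^\star$ gives $(k_{i^\star},+1)\in\cF$ with $k_{i^\star}\in S_j$, so $S_j^+\cap\cF\neq\emptyset$. Or every $b_i=-1$: then $U_j$ can reach $\emptyset$ only by deleting all $|S_j|$ of its elements, so every $k\in S_j$ occurs on $\cP$ and its first occurrence has sign $-1$ and is desirable, giving $(k,-1)\in\cF$ for all $k\in S_j$, i.e. $S_j^-\subset\cF$.

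Disjointness of the two events is then immediate: $\cF\subset\parent^\pm(t_\leaf)$, and $\parent^\pm(t_\leaf)$ by its first-occurrence construction contains each feature with at most one sign, so $S_j^-\subset\cF$ precludes any $(k,+1)$ with $k\in S_j$ from lying in $\cF$, forcing $S_j^+\cap\cF=\emptyset$. I expect the only real difficulty to be bookkeeping rather than anything conceptual: keeping $\dot\parent^\pm$ distinct from $\parent^\pm$ for features that appear several times on $\cP$, and verifying the claim in the second step that desirability holds automatically at the first occurrence of an $S_j$-feature up to the first wrong-direction split — which is just a careful reading of \eqref{eq:desirableFeatures}. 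Everything after the C3 reduction is a deterministic property of the fixed path.
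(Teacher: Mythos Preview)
Your proposal is correct and follows essentially the same approach as the paper: for each $j$ you perform the same case split (some first occurrence of an $S_j$-feature has sign $+1$, versus all have sign $-1$), and in each case verify desirability at the relevant first-occurrence nodes to land the signed feature in $\cF$; the paper does the identical dichotomy, just phrased at the level of $\parent^\pm(t_\leaf)$ rather than via an explicit evolution of $U_j$ along the path. One minor remark: your claim that C3 is ``the single model input needed here'' is slightly too strong --- the paper's argument never invokes C3, since for any fixed $j$ the witness $j$ itself already forces the dichotomy $S_j^+\cap\parent^\pm(t_\leaf)\neq\emptyset$ or $S_j^-\subset\parent^\pm(t_\leaf)$ directly from the $\exists j$ in the definition of $U(t)$, independently of whether the $S_j$ overlap.
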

\begin{proof}
For an arbitrary interaction $j \in [J]$, it follows from the definition of $U(t)$ that $\Omega_0$ implies either $S_j^+ \cap \parent^\pm(t_{\leaf}) \neq \emptyset$ or $S_j^- \subset \parent^\pm(t_{\leaf})$. 
First, consider $S_j^+ \cap \parent^\pm(t_{\leaf}) \neq \emptyset$. 
Let $(k,+1) \in S_j^+ \cap \parent^\pm(t_{\leaf})$ be the signed feature in $S_j^+ \cap \parent^\pm(t_{\leaf})$ that appears first on the path. 
Then, because $\parent^\pm(t)$ only considers the signed features when they first appear in a path, we have that $(k, +1)$ was desirable and thus, $(k, +1) \in \cF$, i.e., $S_j^+ \cap \cF \neq \emptyset$.
Second, consider $S_j^- \subset \parent^\pm(t_{\leaf})$. Then for any $(k, -1) \in S_j^-$, by definition of $F(t)$ we have that no $S_j^+$ feature appeared on the path before $(k, -1)$ and hence, $(k, +1) \in \cF$, i.e., $S_j^- \subset \cF$.
Finally, recall that by definition of $\cF$ both conditions in  \eqref{eq:irrLN} can never happen at the same time.
\end{proof}
Moreover, define
\begin{align}\label{eq:invFeat}
   C_{\mathrm{root}}(\cD) \triangleq \min_{k \in \cup_{j = 1}^J S_j} P_T\left(\text{the root node of $T$ splits on feature }k \; \middle| \; \cD \right).
\end{align}
We state the population version of our main results below.
\begin{theorem}\label{theo:SappearNew} 
For all $\tilde{S}^\pm \subset [p]\times \{-1,+1\}$ with $\tilde{s} = |\tilde{S}^\pm|$ we have that almost surely
\begin{align}\label{eq:theoNew0}
    P_{\cP}\left( \tilde{S}^\pm \subset \cF \;\middle|\; T, \cD \right) \leq 0.5^{\tilde{s}}
\end{align}
and if $\tilde{S}^\pm$ is a union signed interaction as in Definition \ref{def:unionInter}
then almost surely
\begin{align}\label{eq:theoNew1}
         P_{\cP}\left(\tilde{S}^\pm \subset \cF \;\middle|\; T, \cD \right)   \geq 0.5^{\tilde{s}} - P_{\cP}\left( \Omega_0^c \;\middle|\; T, \cD \right).
\end{align}
Moreover, if $\tilde{S}^\pm$ is not a union signed interaction then almost surely
\begin{align}\label{eq:theoNew2}
       P_{(\cP, T)}\left(\tilde{S}^\pm \subset \cF \;\middle|\;  \cD  \right)  \leq 0.5^{\tilde{s}}(1 - C_{\mathrm{root}}(\cD) /2).
\end{align}
\end{theorem}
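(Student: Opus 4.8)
The plan is to prove all three inequalities conditionally on the tree $T$ and data $\cD$, so that the only remaining randomness is the sequence of fair left/right coin flips generating the path $\cP$. The backbone is one counting fact: for \emph{any} signed set $\tilde S^\pm$, a signed feature $(k,b)\in\tilde S^\pm$ can lie in $\cF$ only if the path takes direction $b$ at the unique node where $k$ is first split desirably (i.e.\ with $k\in U(t)$), and these nodes are distinct for distinct features; exploring the path node by node, each such ``forced'' decision is a fresh fair coin independent of the past, so a short optional-stopping / induction-on-depth argument (the same one behind Theorem~\ref{theo:mainResult_general_upper_bound}) yields $2^{\tilde s}\,P_\cP(\tilde S^\pm\subset\cF\mid T,\cD)\le 1$, which is \eqref{eq:theoNew0}.

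For the lower bound \eqref{eq:theoNew1} I would write $\tilde S^\pm=\bigcup_{j\in\cI}S_j^-\cup\bigcup_{j\in\cI_s}\{(k,b_k)\}$ and introduce the event $G$ that the path takes, at every node that desirably splits a feature occurring in $\tilde S^\pm$, the ``prescribed'' direction: $-1$ for a feature of some $S_j^-$ with $j\in\cI$, and $b_k$ for the single feature of $S_j$ with $j\in\cI_s$. On $G\cap\Omega_0$ one gets $\tilde S^\pm\subset\cF$: for $j\in\cI$, Lemma~\ref{lem:irrLN} gives $S_j^-\subset\cF$ or $S_j^+\cap\cF\ne\emptyset$, and $G$ forbids the latter, so $S_j^-\subset\cF$; for $j\in\cI_s$, Lemma~\ref{lem:irrLN} forces that feature to be desirably split on $\cP$ and $G$ forces its sign, so $(k,b_k)\in\cF$. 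Each feature occurring in $\tilde S^\pm$ is desirably split at most once (going either way on a feature of an interaction deletes that whole interaction from the sets $U(\cdot)$ afterwards), so there are at most $\tilde s$ forced nodes on $\cP$ and the same coin-counting argument gives $P_\cP(G\mid T,\cD)\ge 2^{-\tilde s}$; hence $P_\cP(\tilde S^\pm\subset\cF\mid T,\cD)\ge P_\cP(G\cap\Omega_0\mid T,\cD)\ge P_\cP(G\mid T,\cD)-P_\cP(\Omega_0^c\mid T,\cD)\ge 2^{-\tilde s}-P_\cP(\Omega_0^c\mid T,\cD)$.

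For the non-interaction bound \eqref{eq:theoNew2}: if $\tilde S^\pm$ contains a noise feature then $\cF\subset(\cup_jS_j)\times\{-1,+1\}$ makes the probability $0$, so I may assume it does not; then, since interactions are disjoint, $\tilde S^\pm$ failing to be a union signed interaction means there is a ``defective'' interaction $j^*$ with $|S_{j^*}|>1$ and $T^*:=\tilde S^\pm\cap(S_{j^*}\times\{-1,+1\})$ non-empty and $\ne S_{j^*}^-$. The plan is to exhibit a witness signal feature $k^\bullet\in\cup_jS_j$ such that, conditionally on the root of $T$ splitting on $k^\bullet$, $P_\cP(\tilde S^\pm\subset\cF\mid T,\cD)\le 2^{-\tilde s-1}$: (a) if $T^*$ contains a $+1$-feature $(k^*,+1)$ and $|T^*|\ge 2$, take $k^\bullet=k^*$ --- both root directions then doom $\tilde S^\pm$ (going right kills $j^*$, so a needed sibling of $k^*$ cannot enter $\cF$; going left puts $(k^*,-1)$ on the path and makes $(k^*,+1)$ forever undesirable), so the conditional probability is $0$; (b) if $T^*=\{(k^*,+1)\}$, take $k^\bullet$ to be any other feature of $S_{j^*}$ (it exists since $|S_{j^*}|>1$, and it is absent from $\tilde S^\pm$); (c) if $\emptyset\ne T^*\subsetneq S_{j^*}^-$, take $k^\bullet$ a feature of $S_{j^*}$ absent from $\tilde S^\pm$. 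In cases (b)--(c), if the root splits on $k^\bullet$ then going right kills $j^*$ and thereby a needed feature of $\tilde S^\pm$ (conditional probability $0$), while going left merely wastes $k^\bullet$ and leaves all $\tilde s$ constraints in force, so the conditional probability is $\le\frac12(0+2^{-\tilde s})=2^{-\tilde s-1}$. Conditionally on the root not splitting on $k^\bullet$ I only use \eqref{eq:theoNew0}, i.e.\ $\le 2^{-\tilde s}$. Since $k^\bullet\in\cup_jS_j$ gives $P_T(\text{root splits on }k^\bullet\mid\cD)\ge C_{\mathrm{root}}(\cD)$, averaging over $T$ yields $P_{(\cP,T)}(\tilde S^\pm\subset\cF\mid\cD)\le C_{\mathrm{root}}(\cD)\,2^{-\tilde s-1}+(1-C_{\mathrm{root}}(\cD))\,2^{-\tilde s}=2^{-\tilde s}(1-C_{\mathrm{root}}(\cD)/2)$.

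The hard part will be the non-interaction bound, specifically choosing the witness feature and checking every configuration of $T^*$. The delicate point is that the obvious choice --- a feature literally occurring in $\tilde S^\pm$ --- can fail: when $\tilde S^\pm$ meets the defective interaction in exactly one $+1$-feature, splitting the root on that very feature is harmless (going right satisfies it), so one must instead use a sibling feature of $S_{j^*}$ absent from $\tilde S^\pm$, and this is precisely where $|S_{j^*}|>1$ enters essentially. Throughout, the other thing to be careful about is the bookkeeping of the first-appearance sign convention and the fact --- encoded in the definition of $U(t)$ --- that any $+1$-split on a feature of an interaction permanently removes that interaction's features from the desirable sets; Lemma~\ref{lem:irrLN} packages just what is needed for the lower bound, but for the upper bound these ``killing'' statements have to be re-established directly for the chosen witness and root direction.
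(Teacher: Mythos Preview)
Your proposal is correct and tracks the paper's proof closely: all three parts use the first-appearance coin flips $B^k$, Lemma~\ref{lem:irrLN} supplies the lower bound, and a root-witness argument gives the non-interaction bound; your case split for \eqref{eq:theoNew2} (by the shape of $T^*$) is a reorganization of the paper's (which first fixes a missing $(k,-1)\in S_j^-$ and then branches on whether $(k,+1)\in\tilde S^\pm$), and the witnesses you pick agree with the paper's choices.

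One omission to patch: your reduction ``no noise features and not a union signed interaction $\Rightarrow$ there exists a defective $j^*$ with $|S_{j^*}|>1$'' can fail when the only defect is that $\tilde S^\pm$ contains both $(k,-1)$ and $(k,+1)$ for a feature $k$ lying in a size-$1$ basic interaction --- in that configuration there need not be any $j$ with $|S_j|>1$ and $T^*_j$ nonempty. The paper disposes of this (together with the case $|\tilde S^\pm\cap S_j^+|>1$) as a trivial probability-$0$ case upfront, since $\cF\subset\parent^\pm(t_{\leaf})$ contains at most one sign per feature; adding this one-line check before invoking the existence of $j^*$ closes the gap. A second minor imprecision: in your lower-bound argument the parenthetical ``going either way on a feature of an interaction deletes that whole interaction from $U(\cdot)$'' is only true for the $+1$ direction --- going $-1$ deletes just that single feature --- but your actual claim (each feature of $\tilde S^\pm$ is desirably split at most once) is correct, since any prior appearance of $k$ with either sign forces $k\notin U(t)$.
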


\begin{proof}[Proof of Theorem \ref{theo:SappearNew}]
Recall that the path $\cP$ corresponding to $\cF$ is selected in such a way: one starts at the root node $t_{\mathrm{root}}$ and then randomly follows the paths in the tree either to the plus ($+1$) or to the minus ($ -1$) direction with probability $0.5$.
Let $\cB$ denote a set of i.i.d.\ Bernoulli coin flips taking values $+1$ and $-1$ with equal probability $0.5$.
Assume that at every node in the tree we draw one of the Bernoulli coin flips $B \in \cB$ to decide whether we follow the path in the plus ($B = +1$) or in the minus ($B = -1$) direction.
In particular, for any feature $k \in [p]$, let $B^k \in \cB$ be the Bernoulli random variable we draw when $k$ appears for the first time on $\cP$.

\textbf{Proof of \eqref{eq:theoNew0}}:
Note that when $(k, -1) \in \cF$, $B^k = -1$.
Similar, when $(k, +1) \in \cF$, this implies that $B^k = +1$.
Consequently, for any $\tilde{S}^\pm = \{(k_1, b_1), \ldots, (k_{\tilde{s}}, b_{\tilde{s}}) \} \subset [p]\times \{-1,+1\}$ we have that
\begin{align}\label{eq:FsubsetB}
    \{ \tilde{S}^\pm \subset \cF \} \subset \{ B^{k_1} = b_1 \cap \ldots \cap  B^{k_{\tilde{s}}} = b_{\tilde{s}}  \}
\end{align}
and hence
\begin{align}
    P_{\cP}\left(\tilde{S}^\pm \subset \cF \middle|\; T, \cD \right) \leq P \left(B^{k_1} = b_1 \cap \ldots \cap  B^{k_{\tilde{s}}} = b_{\tilde{s}}  \right) = 0.5^{\tilde{s}}.
\end{align}
That completes the proof.

\textbf{Proof of \eqref{eq:theoNew1}}:
Consider any basic interaction $S_j = \{k_1, \ldots, k_{s_j}\}$, $j \in [J]$, then by Lemma \ref{lem:irrLN} we have that
\begin{align}\label{eq:BsubsetF_int}
   \Omega_0 \cap  \{ B^{k_1} = \ldots = B^{k_{s_j}} = -1 \} \subset  \{ S_j^- \subset \cF \}.
\end{align}
Moreover, when $s_j = 1$, we also have that
\begin{align}\label{eq:BsubsetF_single}
  \Omega_0 \cap  \{ B^{k_1} = +1 \} \subset  \{ S_j^+ \subset \cF \}.
\end{align}
Consequently, when $\tilde{S}$ is a union interaction as in Definition \ref{def:unionInter} it follows that 
\begin{align}\label{eq:BsubsetF}
  \Omega_0 \cap   \{B^{k_1} = b_1 \cap \ldots \cap  B^{k_{\tilde{s}}} = b_{\tilde{s}}  \} \subset  \{ \tilde{S}^\pm \subset \cF \},
\end{align}
which, shows \eqref{eq:theoNew1}.

\textbf{Proof of \eqref{eq:theoNew2}}:
Assume that $\tilde{S}^\pm$ is not a union interaction.
If any of the following is true:
\begin{itemize}
    \item $\tilde{S}^\pm$ contains any noisy signed feature $(k,b)$ that's not contained in $\cup_j S_j^+ \cup S_j^-$;
    \item for some signal feature $k \in \cup_j S_j$ we have that $(k, +1), (k, -1) \in \tilde{S}^\pm$;
    \item $|\tilde{S}^\pm \cap S_j^+| > 1$ for some $j \in [J]$; 
\end{itemize}
Then by definition of $U(t)$ in \eqref{eq:defPF}, $P_{\cP}\left( \tilde{S}^\pm \subset \cF \middle| \; T, \cD \right) = 0$ and thus, \eqref{eq:theoNew2} holds.

Thus, we can assume that $\tilde{S}^\pm$ contains no noisy features and there exists some interaction $j \in [J]$ with $s_j >1$ such that $(S_j^- \cup S_j^+) \cap \tilde{S}^\pm \neq \emptyset$ and for some $(k,-1) \in S_j^-$ we have that $(k,-1) \not\in \tilde{S}^\pm$.

First, assume that $(k, +1) \not\in \tilde{S}^\pm$.
Then, whenever $t_{\mathrm{root}}$ splits on feature $k$, we have that $\{\tilde{S}^\pm \subset \cF\}$ implies\footnote{Note that this requires the interactions to be disjoint, as otherwise the features in $\tilde{S}^\pm \cap (S_j^+ \cup S_j^-)$ may also appear in other interactions $S_l$ with $l \neq j$ and $k\not\in S_l$ and thus, even when $B^k = +1$ it is possible that $\tilde{S}^\pm \cap (S_j^+ \cup S_j^-) \subset \cF$.}
$B^k = -1$
and thus,
\begin{align*}
P_{(\cP, T)}\left(\tilde{S}^\pm \subset \cF | \cD\right) &= \sum_{\tilde{k} \in [p]}  P_{(\cP, T)}\left(\tilde{S}^\pm \subset \cF \cap t_{\mathrm{root}}\text{ splits on }\tilde{k} | \cD\right)\\
    &\leq \sum_{\tilde{k} \neq k}  P_{(\cP, T)}\left( B^{k_1} = b_1 \cap \ldots \cap  B^{k_{\tilde{s}}} = b_{\tilde{s}}  \cap t_{\mathrm{root}}\text{ splits on }\tilde{k} | \cD\right)\\
    &+ P_{(\cP, T)}\left(B^{k_1} = b_1 \cap \ldots \cap  B^{k_{\tilde{s}}} = b_{\tilde{s}}   \cap B^k = -1 \cap t_{\mathrm{root}}\text{ splits on }k | \cD\right)\\
    &= \sum_{\tilde{k} \neq k}  P\left( B^{k_1} = b_1 \cap \ldots \cap  B^{k_{\tilde{s}}} = b_{\tilde{s}}   \right) P_{T}\left( t_{\mathrm{root}}\text{ splits on }\tilde{k} | \cD\right)\\
    &+ P\left( B^{k_1} = b_1 \cap \ldots \cap  B^{k_{\tilde{s}}} = b_{\tilde{s}}  \cap B^k = -1 \right) P_{ T}\left( t_{\mathrm{root}}\text{ splits on }k| \cD \right)\\
    &= 0.5^{\tilde{s}} (1 - P_{ T}(t_{\mathrm{root}}\text{ splits on }k| \cD)) + 0.5^{\tilde{s} + 1}P_{ T}(t_{\mathrm{root}}\text{ splits on }k| \cD)\\
    & \leq 0.5^{\tilde{s}}(1 - C_{\mathrm{root}}/2),
\end{align*}
where we made use of the fact that the tree $T$ is independent of the Bernoulli random variables $\cB$.

Second, assume that $(k, +1) \in \tilde{S}^\pm$.
If $\tilde{S}^\pm \cap S_j^-  \neq \emptyset$, then $\{\tilde{S}^\pm \subset \cF\}$ implies\footnote{Again, this requires the interactions to be disjoint, as otherwise the features in $\tilde{S}^\pm \cap (S_j^- \cup S_j^+) \setminus {(k, +1)}$ may also appear in other interactions $S_l$ with $l \neq j$ and thus, even when $t_{\mathrm{root}}$ splits on $k$ with $B^k = +1$, it is possible that $\tilde{S}^\pm \cap (S_j^- \cup S_j^+) \setminus {(k, +1)} \subset \cF$.} that  $t_{\mathrm{root}}$ does not split on $k$ and thus 
\begin{align*}
P_{(\cP, T)}\left(\tilde{S}^\pm \subset \cF \middle|\; \cD \right) &\leq P\left( B^{k_1} = b_1 \cap \ldots \cap  B^{k_{\tilde{s}}} = b_{\tilde{s}}  \right) P_{ T} \left(t_{\mathrm{root}}\text{ does not split on }k \middle| \; \cD \right)\\ 
&= 0.5^{\tilde{s}} P_{ T} \left(t_{\mathrm{root}}\text{ does not split on }k  \middle|\; \cD \right) \leq 0.5^{\tilde{s}}(1  -  C_{\mathrm{root}}).
\end{align*}
If $\tilde{S}^\pm \cap S_j^-  = \emptyset$, let $k^\star \in S_j$ and $k^\star \neq k$.
Because $(k, +1) \in \tilde{S}^\pm$, we can assume that $(k^{\star }, +1) \not\in \tilde{S}^\pm$; otherwise $|\tilde{S}^\pm \cap S_j^+| > 1$, which implies $ P\left(\tilde{S}^\pm \subset \cF\right) = 0$.
When $t_{\mathrm{root}}$ splits on $k^\star$, $\{\tilde{S}^\pm \subset \cF\}$ implies\footnote{Again, this requires the interactions to be disjoint.} $B^{k^\star} = -1$ and thus,
\begin{align*}
    &P_{(\cP, T)}\left(\tilde{S}^\pm \subset \cF \middle| \; \cD \right) = \sum_{\tilde{k} \in [p]}  P_{(\cP, T)} \left(\tilde{S}^\pm \subset \cF \cap t_{\mathrm{root}}\text{ splits on }\tilde{k} \middle| \; \cD \right)\\
     &\leq \sum_{\tilde{k} \neq k^\star}  P_{(\cP, T)} \left( B^{k_1} = b_1 \cap \ldots \cap  B^{k_{\tilde{s}}} = b_{\tilde{s}}  \cap t_{\mathrm{root}}\text{ splits on }\tilde{k} \middle| \; \cD \right)\\
    &+ P_{(\cP, T)} \left( B^{k_1} = b_1 \cap \ldots \cap  B^{k_{\tilde{s}}} = b_{\tilde{s}}  \cap B^{k^\star} = -1 \cap t_{\mathrm{root}}\text{ splits on }k^\star \middle| \; \cD \right)\\
    &\leq  0.5^{\tilde{s}} (1 - P_{ T} \left(t_{\mathrm{root}}\text{ splits on }k^\star \middle| \; \cD \right) ) + 0.5^{\tilde{s} + 1} P_{(\cP, T)} \left(t_{\mathrm{root}}\text{ splits on }k^\star \middle| \; \cD \right)\\ &\leq 0.5^{\tilde{s}}(1 - C_{\mathrm{root}}/2).
\end{align*}
Thus, we have shown \eqref{eq:theoNew2}.
\end{proof}

\subsection{Proof of the finite sample case}\label{subsec:finiteSampleProof}

\subsubsection{Filtering of desirable features and impurity}\label{subsubsec:filtering}

Recall that $R_{t,l} = R_t\cap \{X|X_{k_t}\leq \gamma_{t}\}$ and $R_{t,r} = R_t\cap \{X|X_{k_t} > \gamma_{t}\}$ denote the region corresponding to the left and right children for node $t$. In other words, node $t$ divides the region $R_t$ into $R_{t,l}$ and $R_{t,r}$. Recall that $N_{n}(t)$ is the number of samples in the region $R_t$, i.e., $N_n(t) = \sum_{i=1}^n \1(x_i\in R_t)$. We will use an equivalent formula for the impurity as in Lemma \ref{lemma:two_impurities}.

\begin{lemma}\label{lemma:two_impurities}
$\Delta_I^n(R_{t,l}, R_{t,r})$ defined in \eqref{eq:impdecrease} in the main paper is equivalent to \eqref{eq:impInc}:
\begin{align}\label{eq:impInc}
    \Delta_I^n(R_{t,l}, R_{t,r}) = \frac{N_n(t_l)N_n(t_r)}{n(N_n(t_l) + N_n(t_r))}\left(\frac{1}{N_n(t_l)}\sum_{\bx_i \in {R_{t,l}}}y_i - \frac{1}{N_n(t_r)}\sum_{\bx_i \in R_{t,r}}y_i\right)^2.
\end{align}
\end{lemma}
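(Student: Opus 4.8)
The plan is to reduce the claimed identity to the elementary one-way ANOVA decomposition — total sum of squares equals within-group plus between-group sum of squares — applied to the two-part partition $R_t = R_{t,l} \sqcup R_{t,r}$ induced by the split at node $t$.

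First I would set up shorthand: write $n_l = N_n(t_l)$, $n_r = N_n(t_r)$, $n_t = N_n(t)$, and record that, since a split of $t$ partitions $R_t$ into its two children, $n_t = n_l + n_r$ and every sum over $R_t$ decomposes into a sum over $R_{t,l}$ plus a sum over $R_{t,r}$. Let $\bar y_l, \bar y_r, \bar y_t$ denote the sample means of the $y_i$ over the three regions. Since $I_n(t)$ is by definition the sample variance, $I_n(t) = \frac{1}{n_t}\sum_{\bx_i \in R_t}(y_i - \bar y_t)^2$, and likewise for $t_l, t_r$; multiplying the definition \eqref{eq:impdecrease} of $\Delta_I^n(t)$ through by $n_t$ turns it into $\sum_{\bx_i \in R_t}(y_i - \bar y_t)^2 - \sum_{\bx_i \in R_{t,l}}(y_i - \bar y_l)^2 - \sum_{\bx_i \in R_{t,r}}(y_i - \bar y_r)^2$.

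The key step is the ANOVA identity $\sum_{\bx_i \in R_t}(y_i - \bar y_t)^2 = \sum_{\bx_i \in R_{t,l}}(y_i - \bar y_l)^2 + \sum_{\bx_i \in R_{t,r}}(y_i - \bar y_r)^2 + n_l(\bar y_l - \bar y_t)^2 + n_r(\bar y_r - \bar y_t)^2$, which I would verify in one line by writing $y_i - \bar y_t = (y_i - \bar y_l) + (\bar y_l - \bar y_t)$ on $R_{t,l}$ (and analogously on $R_{t,r}$), expanding the square, and using $\sum_{\bx_i \in R_{t,l}}(y_i - \bar y_l) = 0$. This collapses $n_t \Delta_I^n(t)$ to $n_l(\bar y_l - \bar y_t)^2 + n_r(\bar y_r - \bar y_t)^2$. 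Substituting $\bar y_t = (n_l \bar y_l + n_r \bar y_r)/n_t$ gives $\bar y_l - \bar y_t = (n_r/n_t)(\bar y_l - \bar y_r)$ and $\bar y_r - \bar y_t = -(n_l/n_t)(\bar y_l - \bar y_r)$, so the two remaining terms combine into $\frac{n_l n_r}{n_t}(\bar y_l - \bar y_r)^2$. Dividing back by the appropriate normalization and rewriting $\bar y_l, \bar y_r$ as $\frac{1}{N_n(t_l)}\sum_{\bx_i \in R_{t,l}} y_i$ and $\frac{1}{N_n(t_r)}\sum_{\bx_i \in R_{t,r}} y_i$ then yields \eqref{eq:impInc}.

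I do not anticipate a genuine obstacle — this is the standard between/within sum-of-squares identity. The only point requiring care is the normalization constant: tracking whether the impurity decrease carries the extra global weight $N_n(t)/n$ (the scikit-learn convention used for the simulations) is what produces the factor $n$, rather than $N_n(t)$, in the denominator of \eqref{eq:impInc}; once that convention is fixed consistently across the two displays, the remaining bookkeeping is mechanical.
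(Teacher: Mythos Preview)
Your proposal is correct and essentially equivalent to the paper's proof. The only cosmetic difference is packaging: the paper expands each variance as $\sum y_i^2 - \tfrac{1}{N}(\sum y_i)^2$, cancels the $\sum y_i^2$ terms across the three regions, and then manipulates the remaining cross terms with $A=\sum_{R_{t,l}}y_i$, $B=\sum_{R_{t,r}}y_i$; you instead invoke the one-way ANOVA identity directly and simplify the between-group sum of squares via $\bar y_t=(n_l\bar y_l+n_r\bar y_r)/n_t$. Both routes are the same elementary algebra, and your remark about the global $N_n(t)/n$ weighting is exactly the point that reconciles the $n$ (rather than $N_n(t)$) appearing in the denominator of \eqref{eq:impInc}.
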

\begin{proof}
We have that
\begin{align*}
&\Delta_I^n(R_{t,l}, R_{t,r})\\
= & \frac{1}{n}\left(\sum_{\bx_i\in R_t}(y_i - \frac{1}{N_n(t)}\sum_{\bx_i \in R_t}y_i)^2 - \sum_{\bx_i\in R_{t,l}}(y_i - \frac{1}{N_n(t_l)}\sum_{\bx_i \in R_{t,l}}y_i)^2 - \sum_{\bx_i\in R_{t,r}}(y_i - \frac{1}{N_n(t_r)}\sum_{\bx_i \in R_{t,r}}y_i)^2\right)\\
= & \frac{1}{n}\left(\sum_{\bx_i\in R_t}y_i^2 - \frac{1}{N_n(t)}(\sum_{\bx_i \in R_t}y_i)^2 - \sum_{\bx_i\in R_{t,l}}y_i^2 + \frac{1}{N_n(t_l)}(\sum_{\bx_i \in R_{t,l}}y_i)^2 - \sum_{\bx_i\in R_{t,r}}y_i^2 + \frac{1}{N_n(t_r)}(\sum_{\bx_i \in R_{t,r}}y_i)^2\right)\\
= & \frac{1}{n}\left( - \frac{1}{N_n(t)}(\sum_{\bx_i \in R_t}y_i)^2 + \frac{1}{N_n(t_l)}(\sum_{\bx_i \in R_{t,l}}y_i)^2 + \frac{1}{N_n(t_r)}(\sum_{\bx_i \in R_{t,r}}y_i)^2\right).
\end{align*}
If we denote $A = \sum_{\bx_i \in R_{t,l}}y_i$ and $B = \sum_{\bx_i \in R_{t,r}}y_i$, the above formula is the same as :
\begin{align*}
     & \frac{1}{n}\left( - \frac{1}{N_n(t)}(A+B)^2 + \frac{1}{N_n(t_l)}A^2 + \frac{1}{N_n(t_r)}B^2\right)\\
     = &\frac{1}{n}\left( \frac{N_n(t_r)}{N_n(t_l)N_n(t)}A^2 + \frac{N_n(t_l)}{N_n(t_r)N_n(t)}B^2 - \frac{2}{N_n(t)}AB \right)\\
     = &\frac{N_n(t_l) N_n(t_r)}{n N_n(t)}\left( \frac{1}{N_n(t_l)^2}A^2 + \frac{1}{N_n(t_r)^2}B^2 - \frac{2}{N_n(t_l)N_n(t_r)}AB \right) = \frac{N_n(t_l) N_n(t_r)}{n N_n(t)} \left(\frac{1}{N_n(t_l)} A - \frac{1}{N_n(t_r)} B\right)^2.
\end{align*}
\end{proof}
Let $\mathscr R$ denote the set of axis-aligned hyper-rectangles obtained by splitting the unit hyper-rectangle consecutively, where each split satisfies assumption \ref{A:balancedsplit} in the main text. We study $\mathscr R$ because it contains all the possible rectangles that can represent region of a node in a tree.
Let $\mathscr{R}_d$ be the set of rectangles obtained by splitting the unit hyper-rectangle $d$ times, where each split satisfies assumption \ref{A:balancedsplit} from the main text.
Then $\mathscr{R} = \cup_{d\geq 1} \mathscr{R}_d$, and for any $R\in\mathscr{R}_d$, we have $\mu(R) \leq (1-C_{\gamma})^{d}$ (recall that $\mu(R)$ denotes the volume of $R$). For any region $R$, we denote $N_R$ to be the number of points in $R$. 
\begin{lemma}
\label{lemma:generalization_small_rectangles}
Suppose that assumption \ref{A:balancedsplit} from the main text is satisfied. Then for any $d\geq 1$ it holds true that
\[
\max_{R\in \cup_{d_1>d}\mathscr{R}_{d_1}}\Bigg|\frac{1}{n}\sum_{i=1}^ny_i\1(\bx_i\in R_1) - \mathbb E (Y\cdot\1(X\in R))\Bigg|\leq C_Y\left(\max_{R\in \mathscr{R}_d} \Big|\frac{N_R}{n} - \mu(R)\Big|\right) + 2C_Y (1-C_{\gamma})^d.
\]
\end{lemma}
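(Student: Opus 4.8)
The plan is to reduce the uniform control over all ``deep'' rectangles (those obtained by more than $d$ balanced splits) to the uniform control of the empirical mass of rectangles in the smaller family $\mathscr{R}_d$. The two ingredients are that $Y$ is bounded, so that the extra $y_i$-weighting costs only a factor $C_Y$, and that deep rectangles have small volume.

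First I would record a nesting observation: the splitting procedure is sequential, so any $R\in\mathscr{R}_{d_1}$ with $d_1>d$ is obtained from $[0,1]^p$ by $d_1$ consecutive balanced splits, and the cell obtained after the first $d$ of these splits is a well-defined ancestor $R'\in\mathscr{R}_d$ with $R\subseteq R'$. Consequently $N_R\le N_{R'}$ and $\mu(R)\le\mu(R')\le(1-C_\gamma)^{d}$, the last inequality being the volume estimate for $\mathscr{R}_d$ recorded just before the lemma, which uses \ref{A:balancedsplit}.

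Next, using $|Y|\le C_Y$ almost surely, I would bound the empirical and the population piece separately. For the empirical piece,
\[
\Bigl|\tfrac1n\sum_{i=1}^n y_i\1(\bx_i\in R)\Bigr|\;\le\;\tfrac{C_Y}{n}\sum_{i=1}^n\1(\bx_i\in R)\;=\;C_Y\tfrac{N_R}{n}\;\le\;C_Y\tfrac{N_{R'}}{n}\;\le\;C_Y\Bigl((1-C_\gamma)^d+\max_{\bar R\in\mathscr{R}_d}\Bigl|\tfrac{N_{\bar R}}{n}-\mu(\bar R)\Bigr|\Bigr),
\]
where the final step inserts $N_{R'}/n\le\mu(R')+|N_{R'}/n-\mu(R')|$ and then bounds $\mu(R')\le(1-C_\gamma)^d$ and $|N_{R'}/n-\mu(R')|$ by the maximum over $\mathscr{R}_d$. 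For the population piece, $|\E(Y\,\1(X\in R))|\le C_Y\,\mu(R)\le C_Y(1-C_\gamma)^d$. Adding the two bounds via the triangle inequality shows that every $R\in\bigcup_{d_1>d}\mathscr{R}_{d_1}$ obeys the claimed inequality; since the right-hand side does not depend on $R$, maximizing over such $R$ completes the proof.

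I do not expect a genuine obstacle here; the one point needing a moment of care is the nesting observation, namely that a rectangle produced by $d_1>d$ balanced splits sits inside a member of $\mathscr{R}_d$. This is essentially a matter of the convention for $\mathscr{R}_{d_1}$: reading it as the collection of cells reachable by $d_1$ successive refinements of $[0,1]^p$ makes the nesting automatic, and then the volume decay $\mu(R)\le(1-C_\gamma)^d$ for $R\in\mathscr{R}_d$ (a direct consequence of \ref{A:balancedsplit}) supplies the $2C_Y(1-C_\gamma)^d$ term. One could even sharpen the constant to $(1-C_\gamma)^{d+1}$ using $\mu(R)\le(1-C_\gamma)^{d_1}$ directly, but the stated form is all that is used downstream.
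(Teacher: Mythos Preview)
Your proposal is correct and follows essentially the same approach as the paper: both use the nesting $R\subseteq R'\in\mathscr{R}_d$, bound the empirical term via $|Y|\le C_Y$ and $N_R\le N_{R'}$ with $N_{R'}/n\le\mu(R')+\bigl|N_{R'}/n-\mu(R')\bigr|$, bound the population term by $C_Y\,\mu(R)$, and combine with the triangle inequality. The paper in fact records the slightly sharper $C_Y(1-C_\gamma)^{d+1}$ for the population piece exactly as you suggest in your final remark, though it then relaxes to $2C_Y(1-C_\gamma)^d$ for the stated bound.
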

\begin{proof} [Proof of Lemma]
For any $R_1\in \mathscr{R}_{d_1}$, $d_1>d$, there exists $R_0\in\mathscr{R}_d$ such that
$R_1\subset R_0$. Therefore, $N_{R_1} < N_{R_0}$ and
\[
\Big|\frac{1}{n}\sum_{i=1}^ny_i\1({\bx_i\in R_1})\Big| \leq \frac{N_{R_1}}{n}C_Y < \frac{N_{R_0}}{n}C_Y.
\]
Since $R_0\in\mathscr{R}_d$, we have
\begin{equation}
    \label{eq:rectangle_size_bound}
    \frac{N_{R_0}}{n} \leq \max_{R\in \mathscr{R}_d} \Big|\frac{N_R}{n} - \mu(R)\Big| + \max_{R\in \mathscr{R}_d} \mu(R) \leq \max_{R\in \mathscr{R}_d} \Big|\frac{N_R}{n} - \mu(R)\Big| + (1-C_{\gamma})^{d}.
\end{equation}
Therefore,
\begin{align*}
&\Bigg|\frac{1}{n}\sum_{i=1}^ny_i\1({\bx_i\in R_1}) - \mathbb E (Y\cdot\1({X\in R}))\Bigg| \\
\leq \;\ &\Big|\frac{1}{n}\sum_{i=1}^n y_i\1({\bx_i\in R_1})\Big| + |\mathbb E (Y\cdot\1({X\in R}))| \\
\leq \;\ & \frac{N_{R_0}}{n}C_Y + C_Y(1-C_{\gamma})^{d+1}\\
\leq \;\ & C_Y\left(\max_{R\in \mathscr{R}_d} \Big|\frac{N_R}{n} - \mu(R)\Big| + (1-C_{\gamma})^{d}\right) + C_Y(1-C_{\gamma})^{d+1}\\
\leq \;\ & C_Y\left(\max_{R\in \mathscr{R}_d} \Big|\frac{N_R}{n} - \mu(R)\Big|\right) + 2C_Y (1-C_{\gamma})^d.
\end{align*}
Since $R_1$ is arbitrary, we have
\begin{equation}
    \max_{R\in \cup_{d_1>d}\mathscr{R}_{d_1}}\Bigg|\frac{1}{n}\sum_{i=1}^n  y_i\1({\bx_i\in R_1}) - \mathbb E (Y\cdot\1({X\in R}))\Bigg|\leq C_Y\left(\max_{R\in \mathscr{R}_d} \Big|\frac{N_R}{n} - \mu(R)\Big|\right) + 2C_Y (1-C_{\gamma})^d.
\end{equation}
\end{proof}

\begin{proposition}\label{prop:uniform_c}
Suppose that constraint C4 and
%assumptions \ref{A:asym_n_p} and 
assumption \ref{A:balancedsplit} from the main text hold true. Then
\[
\max_{R\in \mathscr{R}} \Big|\frac{N_R}{n} - \mu(R)\Big| \pto 0,
\]
and
\[
\max_{R\in \mathscr{R}} \Big|\frac{1}{n}\sum_{i=1}^n y_i\1({\bx_i\in R}) - \mathbb E (Y\cdot\1({X\in R}))\Big| \pto 0.
\]
\end{proposition}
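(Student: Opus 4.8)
Both assertions are suprema over the infinite family $\mathscr R=\bigcup_{d\ge1}\mathscr R_d$ of arbitrarily deep balanced-split rectangles. The plan is to (i) truncate the depth, reducing each supremum to one over \emph{shallow} rectangles plus a vanishing geometric remainder, and (ii) control the shallow family by discretizing the split thresholds and applying Hoeffding's inequality with a union bound, choosing the truncation depth as a sequence $d_n\to\infty$ that grows just slowly enough.

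\emph{Step 1 (depth truncation).} First I would fix $d$ and split each supremum over $\mathscr R$ into the part over depth $\le d$ and the part over depth $>d$. For the deep part, the argument in the proof of Lemma~\ref{lemma:generalization_small_rectangles} applies directly: each $R_1\in\mathscr R_{d_1}$ with $d_1>d$ lies inside some $R_0\in\mathscr R_d$, so $N_{R_1}\le N_{R_0}$ and $\mu(R_1)\le(1-C_\gamma)^d$; by \eqref{eq:rectangle_size_bound} this bounds the deep part of $\max_R|N_R/n-\mu(R)|$ by $\max_{R\in\mathscr R_d}|N_R/n-\mu(R)|+2(1-C_\gamma)^d$, and Lemma~\ref{lemma:generalization_small_rectangles} bounds the deep part of the $y$-weighted supremum by $C_Y\max_{R\in\mathscr R_d}|N_R/n-\mu(R)|+2C_Y(1-C_\gamma)^d$. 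Hence it suffices to show that, for a suitable $d_n\to\infty$, both $\max_{R\in\bigcup_{d_1\le d_n}\mathscr R_{d_1}}|N_R/n-\mu(R)|$ and $\max_{R\in\bigcup_{d_1\le d_n}\mathscr R_{d_1}}|\frac1n\sum_i y_i\1(\bx_i\in R)-\E(Y\1(X\in R))|$ converge to $0$ in probability, since then the remainder $(1-C_\gamma)^{d_n}$ vanishes as well.

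\emph{Step 2 (shallow family via discretization).} A rectangle from at most $d$ balanced splits is an axis-aligned box constrained in at most $d$ of the $p$ coordinates. I would round every box endpoint to the grid $\{0,1/n,\dots,1\}$ and sandwich each such box $R$ between an inner and an outer grid box $R^-\subseteq R\subseteq R^+$ with $\mu(R^+)-\mu(R^-)\le 2d/n$; then $|N_R/n-\mu(R)|$ and $|\frac1n\sum_i y_i\1(\bx_i\in R)-\E(Y\1(X\in R))|$ differ from the corresponding quantities for $R^+$ and $R^-$ by at most a constant multiple of $\max_{\text{grid }R'}|N_{R'}/n-\mu(R')|+C_Y\,d/n$ (using $N_{R^+}-N_{R^-}$ to bound the number of points in the ``shell''). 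There are at most $\binom{p}{\le d}(n+1)^{2d}\le\exp\{cd(\log p+\log n)\}$ grid boxes, and for each, Hoeffding applied to $\1(\bx_i\in R')\in[0,1]$ and to $y_i\1(\bx_i\in R')\in[-C_Y,C_Y]$ (here $|Y|\le C_Y$ by constraint C2) together with a union bound gives $P(\max_{\text{grid }R'}|\cdots|>t)\le C_1\exp\{cd(\log p+\log n)-c_2nt^2/C_Y^2\}$. Under C4, $\log p+\log n=o(n)$, so $n/(\log p+\log n)\to\infty$; choosing $d_n\to\infty$ with $d_n(\log p+\log n)=o(n)$ (e.g.\ $d_n=\lceil\sqrt{n/(\log p+\log n+1)}\,\rceil$) makes $d_n/n\to0$ and, for $t=t_n\to0$ with $nt_n^2\gg d_n(\log p+\log n)C_Y^2$, the above probability tends to $0$; combined with Step~1, both claims follow. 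Alternatively one may invoke a uniform law of large numbers for the VC-subgraph class $\{(x,y)\mapsto y\1(x\in R):R\in\bigcup_{d_1\le d}\mathscr R_{d_1}\}$, whose VC index is $O(d\log p)$ since each such $R$ uses only $d$ of the $p$ coordinates.

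\emph{Main obstacle.} The crux is matching the weak hypothesis $\log p/n\to0$ of C4. The naive estimate — treating $\mathscr R_d$ as a subset of \emph{all} boxes in $\R^p$ — has log-complexity $\asymp p\log n$ and would require roughly $p\log n=o(n)$, far stronger than C4. One must exploit that a depth-$d$ rectangle touches only $d$ coordinates, which cuts the log-complexity to $O(d(\log p+\log n))$; the truncation depth $d_n$ then has to be chosen to grow \emph{fast} enough that $(1-C_\gamma)^{d_n}\to0$ yet \emph{slowly} enough that $d_n(\log p+\log n)=o(n)$, and it is precisely C4 that guarantees such a $d_n$ exists. The remaining work — making the grid discretization control $N_R/n$ and the $y$-weighted sums uniformly, not just $\mu(R)$ — is routine bookkeeping.
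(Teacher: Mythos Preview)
Your proposal is correct and shares the same skeleton as the paper's proof: both use the depth truncation (your Step~1 is precisely the paper's application of Lemma~\ref{lemma:generalization_small_rectangles}) to reduce to shallow rectangles, and both exploit that a depth-$d$ rectangle is determined by $O(d)$ choices of coordinate and $O(d)$ thresholds, giving log-complexity $O(d(\log p+\log n))$ and hence the match with C4.

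The difference is in how the shallow family is handled. The paper bounds the growth function of $\bigcup_{d_0\le d}\mathscr R_{d_0}$ by $\log(nd(2np)^d)$ (at each of $d$ splits: $p$ directions, $n$ data-determined thresholds, $2$ sides; plus an extra factor $n$ for the $y$-level) and then invokes Vapnik's uniform law of large numbers (Theorem~5.1 of \cite{vapnik1998statistical}) directly. Your route instead discretizes thresholds to an explicit $1/n$-grid and uses Hoeffding plus a union bound, paying the extra ``shell'' bookkeeping to transfer from the grid back to arbitrary boxes; this is more elementary and self-contained, but longer. Your aside about VC-subgraph classes is essentially what the paper does. A second minor difference: the paper keeps $d$ \emph{fixed}, obtains the $(1-C_\gamma)^d$ remainder for each fixed $d$, and concludes by letting $d\to\infty$ \emph{after} $n\to\infty$; you let $d=d_n\to\infty$ along with $n$. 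Both are valid, and neither gains anything over the other here.
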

\begin{proof}
For any fixed $d$, let $G_n(\mathscr{R}_{d})$ be the growth function for the set of rectangles $\mathscr{R}_{d}$ defined in Chapter 5.2 of Vapnik \cite{vapnik1998statistical}, i.e., 

\[
G_n(\mathscr{R}_{d}) \triangleq \max_{\bx_i\in \R^p,y_i\in \R} \log \left|\left\{(\1({y_1 \geq \theta, \bx_1\in R}), \ldots, \1({y_n \geq \theta, \bx_n\in R}))\Big| R\in \mathscr R_d, \theta\in \R \right\}\right|.
\]
Here for any set $A$, $|A|$ denotes the number of elements in $A$.

We claim that $G_n(\mathscr{R}_{d})\leq \log(n(2np)^d)$. This is because at each of $d$ splits, we have at most $p$ directions and at most $n$ split points to choose from. Therefore, splitting $d$ times can create no more than $(2np)^d$ different separations of the $n$ data points. Furthermore, within each rectangle, the indicator functions $\1({y_i\geq\theta}), \theta\in \mathbb{R}$ can at most create $n$ separations.

Thus,
\begin{equation}\label{eq:growth_function_bound}
G_n(\cup_{d_0\leq d}\mathscr{R}_{d_0})\leq \log (d \exp(G_n(\mathscr R_{d}))) \leq \log(nd(2np)^d),
\end{equation}
and 
\[
\frac{G_n(\cup_{d_0\leq d}\mathscr{R}_{d_0})}{n} \leq \frac{\log(nd) + d\log(2n)}{n} + \frac{d\log p}{n} \to 0.
\]
Therefore, by Theorem 5.1 of \cite{vapnik1998statistical}:

\textbf{Theorem 5.1 in \cite{vapnik1998statistical}}: Let $A \leq Q(z, \alpha) \leq B$, $\alpha\in \Lambda$ be a measurable set of bounded real-valued functions. Let $G_n$ be the growth function of the indicator functions induced by $Q$, then we have the following inequality:
\[
P\left\{\sup_{\alpha\in \Lambda} \left(\int Q(z, \alpha) dF(z) - \frac{1}{n}\sum_{i=1}^n Q(z_i, \alpha)\right)> \epsilon\right\}\leq 4 \exp\left\{\left(\frac{G_{2n}}{n} - \frac{(\epsilon - n^{-1})^2}{(B - A)^2}\right)n\right\}.
\]
we have
\begin{equation}\label{Eq:small_recs}
\max_{R\in \cup_{d_0\leq d}\mathscr{R}_{d_0}}\Bigg|\frac{1}{n}\sum_{i=1}^n y_i\1({\bx_i\in R}) - \mathbb E (Y\cdot\1({X\in R}))\Bigg| \pto 0.
\end{equation}
Taking $Y=1$ and we have 
\begin{equation}
    \max_{R\in \cup_{d_0\leq d}\mathscr{R}_{d_0}} \Big|\frac{N_R}{n} - \mu(R)\Big| \pto 0.
\end{equation}
By Lemma \ref{lemma:generalization_small_rectangles} and the above equation,
we have
\begin{align}
\label{eq:large_recs}
\begin{aligned}
    &\max_{R\in \cup_{d_1>d}\mathscr{R}_{d_1}}\Bigg|\frac{1}{n}\sum_{i=1}^n y_i\1({\bx_i\in R_1}) - \mathbb E (f(X)\cdot\1({X\in R_1}))\Bigg|\\
    \leq & \; C_Y\left(\max_{R\in \mathscr{R}_d} \Big|\frac{N_R}{n} - \mu(R)\Big|\right) + 2C_Y (1-C_{\gamma})^d \overset{p}{\leq} 3 C_Y (1 - C_\gamma)^d.
\end{aligned}
\end{align}
Since that holds for any fixed $d > 0$, we know the left hand side of \eqref{eq:large_recs} converges to zero in probability. Combining \eqref{Eq:small_recs} and \eqref{eq:large_recs}, we have shown that:
\[
\max_{R\in \mathscr{R}} \Big|\frac{1}{n}\sum_{i=1}^n y_i\1({\bx_i\in R}) - \mathbb E (Y\cdot\1({X\in R}))\Big| \pto 0.
\]
Since this holds for any bounded random variable $Y$, we can take $Y=1$ and we have shown
\begin{equation}
    \max_{R\in \mathscr{R}} \Big|\frac{N_R}{n} - \mu(R)\Big|  \pto 0.
\end{equation}
That completes the proof.
\end{proof}

\begin{proposition}[Subgaussian case]\label{prop:sub_gaussian}
Suppose that assumption \ref{A:balancedsplit} from the main text holds true and\\ $(\log n)^{1+\delta} \log p / n \to 0$ for some $\delta > 0$. Suppose $Y = \E(Y|X) + Z$ where $Z$ is independent of $X$ and $1$-subgaussian. 
Then
\[
\max_{R\in \mathscr{R}} \Big|\frac{1}{n}\sum_{i=1}^n y_i\1({\bx_i\in R}) - \mathbb E (Y\cdot\1({X\in R}))\Big| \pto 0.
\]
\end{proposition}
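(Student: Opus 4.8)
The plan is to peel off the bounded part of $Y$, for which Proposition \ref{prop:uniform_c} already applies, and to control the subgaussian noise by truncation combined with the same depth-based peeling of $\mathscr{R}$ that was used in the proof of Proposition \ref{prop:uniform_c}. Since the data come from the LSS model (so that $f(X):=\E(Y\mid X)$ is bounded by a constant), I would write $Y=f(X)+Z$ and use $\E(Z\1(X\in R))=\E(Z)\mu(R)=0$ (as $Z\indep X$ and $\E Z=0$) to get that $\frac1n\sum_{i=1}^n y_i\1(\bx_i\in R)-\E(Y\1(X\in R))$ equals $\big(\frac1n\sum_{i=1}^n f(\bx_i)\1(\bx_i\in R)-\E(f(X)\1(X\in R))\big)+\frac1n\sum_{i=1}^n z_i\1(\bx_i\in R)$. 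The first bracket tends to $0$ uniformly over $R\in\mathscr{R}$ by Proposition \ref{prop:uniform_c} applied to the bounded variable $f(X)$ (note $(\log n)^{1+\delta}\log p/n\to0$ implies the sparsity condition C4). Hence it remains to show $\max_{R\in\mathscr{R}}\big|\frac1n\sum_{i=1}^n z_i\1(\bx_i\in R)\big|\pto0$.

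\textbf{Truncation.} Fix $\alpha>2$ and set $M_n:=\sqrt{\alpha\log n}$. By the subgaussian tail and a union bound, $P(\max_{i\le n}|z_i|>M_n)\le 2n^{1-\alpha/2}\to0$, so on an event $E_n$ of probability tending to $1$ one may replace each $z_i$ by $\bar z_i:=z_i\1(|z_i|\le M_n)$. The $\bar z_i$ remain i.i.d.\ and independent of the $\bx_i$, are bounded by $M_n$, and satisfy $|\E\bar z_i|=|\E(Z\1(|Z|>M_n))|\le C M_n e^{-M_n^2/2}\to0$. Centering to $\tilde z_i:=\bar z_i-\E\bar z_i$, on $E_n$ one has $\big|\frac1n\sum_i z_i\1(\bx_i\in R)\big|\le\big|\frac1n\sum_i\tilde z_i\1(\bx_i\in R)\big|+|\E\bar z_i|$ for every $R$, so it suffices to control $\max_R\big|\frac1n\sum_i\tilde z_i\1(\bx_i\in R)\big|$, where the $\tilde z_i$ are i.i.d., mean zero, bounded by $2M_n$, and independent of the $\bx_i$.

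\textbf{Peeling by depth.} I would take a slowly growing cutoff $d_n\to\infty$ with $d_n=O(\log\log n)$ and split $\mathscr{R}=\big(\bigcup_{d_0\le d_n}\mathscr{R}_{d_0}\big)\cup\big(\bigcup_{d_1>d_n}\mathscr{R}_{d_1}\big)$. For the shallow part, conditioning on $\bx_1,\ldots,\bx_n$, the average $\frac1n\sum_i\tilde z_i\1(\bx_i\in R)$ is, as a function of the $\tilde z_i$, a sum of independent bounded mean-zero terms with Hoeffding variance proxy at most $16M_n^2 N_R/n^2\le 16M_n^2/n$, while the number of distinct indicator vectors $(\1(\bx_1\in R),\ldots,\1(\bx_n\in R))$ arising from at most $d_n$ balanced splits is at most $(2np)^{d_n+1}$ by the growth-function bound \eqref{eq:growth_function_bound}. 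A Hoeffding bound plus a union bound then give, for each fixed $\epsilon>0$, $P\big(\max_{R\in\cup_{d_0\le d_n}\mathscr{R}_{d_0}}|\frac1n\sum_i\tilde z_i\1(\bx_i\in R)|>\epsilon\big)\le 2(2np)^{d_n+1}\exp(-n\epsilon^2/(8M_n^2))\to0$ once $M_n^2 d_n\log(2np)/n\to0$; this is exactly where the strengthened rate enters, since $M_n^2 d_n\log(2np)\asymp(\log n)(\log\log n)(\log n+\log p)$ and $\log\log n=o((\log n)^\delta)$ gives $(\log n)(\log\log n)\log p=o((\log n)^{1+\delta}\log p)=o(n)$. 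For the deep part, every $R_1\in\mathscr{R}_{d_1}$ with $d_1>d_n$ is contained in the rectangle $R_0\in\mathscr{R}_{d_n}$ obtained from its first $d_n$ splits, and $\mu(R_0)\le(1-C_\gamma)^{d_n}$ by \ref{A:balancedsplit}; hence on $E_n$, $\big|\frac1n\sum_i z_i\1(\bx_i\in R_1)\big|\le\frac1n\sum_i|z_i|\1(\bx_i\in R_0)$, a bounded empirical average with mean $\E|Z|\,\mu(R_0)\le\E|Z|(1-C_\gamma)^{d_n}\to0$, which concentrates uniformly over the at most $(2np)^{d_n+1}$ shapes $R_0\in\mathscr{R}_{d_n}$ by the same Hoeffding-plus-union argument (this parallels Lemma \ref{lemma:generalization_small_rectangles} with $C_Y$ replaced by $M_n$). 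Combining the shallow and deep bounds yields $\max_{R\in\mathscr{R}}|\frac1n\sum_i z_i\1(\bx_i\in R)|\pto0$, which finishes the proof.

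\textbf{Main obstacle.} The delicate part will be the joint calibration of the truncation level $M_n$, the depth cutoff $d_n$, and the rectangle count $(2np)^{d_n}$: the deep-rectangle error forces $d_n\to\infty$, which inflates both the union-bound term $d_n\log(np)$ and the variance proxy $M_n^2/n$, and arranging $M_n^2 d_n\log(np)/n\to0$ is precisely what requires strengthening $\log p/n\to0$ (as in C4) to $(\log n)^{1+\delta}\log p/n\to0$. A cruder route applying a single VC bound to all axis-aligned rectangles would instead demand $p\log n/n\to0$, which fails in the intended high-dimensional regime, so the depth peeling (as already present in Lemma \ref{lemma:generalization_small_rectangles}) is essential.
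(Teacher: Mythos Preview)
Your proposal is correct and takes a genuinely different route from the paper's. Both proofs split $Y=f(X)+Z$ and handle the bounded part via Proposition~\ref{prop:uniform_c}; the divergence is entirely in the treatment of $\max_{R\in\mathscr{R}}\big|\tfrac1n\sum_i z_i\1(\bx_i\in R)\big|$.

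You truncate $Z$ at $M_n=\sqrt{\alpha\log n}$, then run the depth-peeling argument of Proposition~\ref{prop:uniform_c} essentially unchanged on the now-bounded noise, with a very small cutoff $d_n=O(\log\log n)$: shallow rectangles by Hoeffding plus a union bound over $(2np)^{d_n}$ occupancy patterns, deep rectangles by passing to the depth-$d_n$ ancestor $R_0$ and bounding $\tfrac1n\sum_i|z_i|\1(\bx_i\in R_0)$, whose mean $\E|Z|(1-C_\gamma)^{d_n}$ vanishes and whose fluctuation is handled by the same Hoeffding-plus-union step. The paper never truncates: it applies the subgaussian Hoeffding inequality directly, obtaining $\exp(-n^2\epsilon^2/(8N_R))$ conditional on $X$, takes a polynomially large cutoff $d=(n/\log(np))^{\delta_0}$ with $\delta_0=\delta/(2\delta+4)$, and for the deep part argues that every $R\in\cup_{d_1>d}\mathscr{R}_{d_1}$ contains at most $s_0=n/(\log n)^{1/2+\delta_0}$ samples (using the rate extracted in Step~1), then union-bounds over the at most $n^{s_0+1}$ sample configurations with the sharper exponent $n^2\epsilon^2/(8s_0)$. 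Your route is more elementary and reuses Lemma~\ref{lemma:generalization_small_rectangles} almost verbatim; the paper's route avoids truncation bookkeeping but needs the explicit rate $\max_{R\in\mathscr{R}_d}|N_R/n-\mu(R)|=o((\log(np)/n)^{1/2-\delta_0})$ from Step~1 to control sample counts in Step~2. Both arguments correctly isolate the reason for the strengthening to $(\log n)^{1+\delta}\log p/n\to0$: in your version it is the product $M_n^2 d_n\log(np)/n$, in the paper's it is the matching of $(s_0+1)\log n$ against $n^2/s_0$.
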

\begin{proof}
Denote $f(X) = E(Y|X)$ and $C_Y = \sum_{j=0}^{J}|\beta_j|$. Then $|f(X)|\leq C_Y$. 
Note that
\begin{align*}
&\max_{R\in \mathscr{R}} \Big|\frac{1}{n}\sum_{i=1}^n y_i\1({\bx_i\in R}) - \mathbb E (Y\cdot\1({X\in R}))\Big| \\
\leq \;\ & \max_{R\in \mathscr{R}} \Big|\frac{1}{n}\sum_{i=1}^n f(\bx_i)\1({\bx_i\in R}) - \mathbb E (f(X)\cdot\1({X\in R}))\Big| + \max_{R\in \mathscr{R}} \Big|\frac{1}{n}\sum_{i=1}^n z_i\1({\bx_i\in R}) \Big|.
\end{align*}
Here $z_i = y_i - f(\bx_i)$ represents the noise terms.
Our proof proceeds in the following two steps.

\textbf{Step 1}.
Show that
\begin{equation}
\label{eq:expectation_generalization}
    \max_{R\in \mathscr{R}} \Big|\frac{1}{n}\sum_{i=1}^n f(\bx_i)\1({\bx_i\in R}) - \mathbb E (f(X)\cdot\1({X\in R}))\Big| \pto 0.
\end{equation}
Step 1 is similar to the proof of Proposition \ref{prop:uniform_c} but the difference is that we need the convergence rate.
Let 
\[
\delta_0 = \frac{\delta}{2\delta + 4},
\]
and take \[
d = \left(\frac{n}{\log (np)}\right)^{\delta_0} \to \infty.
\]
Let $G_n(\mathscr{R}_{d})$ be the growth function for the set of rectangles $\mathscr{R}_{d}$. 
By \eqref{eq:growth_function_bound}, we have
\[
\frac{G_n(\cup_{d_0\leq d}\mathscr{R}_{d_0})}{n} \leq \frac{\log(nd) + d\log(2n)}{n} + \frac{d\log p}{n} = O\left(\frac{d\log (np)}{n}\right) = O\left(\left(\frac{\log(np)}{n}\right)^{1-\delta_0}\right)\to 0.
\]
Therefore, by Theorem 5.1 of \cite{vapnik1998statistical}, we have
\begin{equation}
\label{eq:small_rectangles}
\max_{R\in \cup_{d_0\leq d}\mathscr{R}_{d_0}}\Bigg|\frac{1}{n}\sum_{i=1}^n f(\bx_i)\1({\bx_i\in R_1}) - \mathbb E (f(X)\cdot\1({X\in R}))\Bigg| = o\left(\left(\frac{\log(np)}{n}\right)^{1/2-\delta_0}\right)\pto 0.
\end{equation}
Since this holds for any bounded random variable $Y$, we can take $Y=1$ and it follows that
\begin{equation}
    \label{eq:rectangle_size_deviation}
    \max_{R\in \mathscr{R}_d} \Big|\frac{N_R}{n} - \mu(R)\Big| \leq \max_{R\in \cup_{d_0\leq d}\mathscr{R}_{d_0}} \Big|\frac{N_R}{n} - \mu(R)\Big| = o\left(\left(\frac{\log(np)}{n}\right)^{1/2-\delta_0}\right) \pto 0.
\end{equation}
Since $d\to\infty$, $(1-C_\gamma)^d\to 0$. Therefore, by Lemma \ref{lemma:generalization_small_rectangles},
we have
\begin{equation}
\label{eq:large_rectangles}
\max_{R\in \cup_{d_1>d}\mathscr{R}_{d_1}}\Bigg|\frac{1}{n}\sum_{i=1}^n f(\bx_i)\1({\bx_i\in R_1}) - \mathbb E (f(X)\cdot\1({X\in R}))\Bigg| \pto 0.
\end{equation}
Combining \eqref{eq:small_rectangles} and \eqref{eq:large_rectangles}, \eqref{eq:expectation_generalization} is proved.

\textbf{Step 2}.
Show that 
\begin{equation}
\label{eq:noise_generalization}
    \max_{R\in \mathscr{R}} \Big|\frac{1}{n}\sum_{i=1}^n z_i\1({\bx_i\in R}) \Big| \pto 0.
\end{equation}

Note that 
\[
\max_{R\in \mathscr{R}} \Big|\frac{1}{n}\sum_{i=1}^n z_i\1({\bx_i\in R}) \Big| = \max\left\{\max_{R\in \cup_{d_0\leq d}\mathscr{R}_{d_0}} \Big|\frac{1}{n}\sum_{i=1}^n z_i\1({\bx_i\in R}) \Big|, \max_{R\in R\in \cup_{d_1>d}\mathscr{R}_{d_1}} \Big|\frac{1}{n}\sum_{i=1}^n z_i\1({\bx_i\in R}) \Big| \right\}.
\]
Therefore, it suffices to prove that both of the two terms on the right hand size converges to 0 in probability.
We begin with the first term: $\max_{R\in \cup_{d_0\leq d}\mathscr{R}_{d_0}} \Big|\frac{1}{n}\sum_{i=1}^n z_i\1({\bx_i\in R}) \Big|$. Since $X$ and $Z$ are independent and $Z$ is 1-subgaussian, by Hoeffding inequality,
\[
P\left(\Big|\frac{1}{n}\sum_{i=1}^n z_i\1({\bx_i\in R}) \Big|\geq \epsilon/2  \; \bigg| \; X\right) = P\left( \Big|\frac{1}{n}\sum_{i=1}^{N_R} z_i\Big|\geq \epsilon/2\right) \leq 2\exp\left(-\frac{n^2\epsilon^2}{8N_R}\right) 
\]
for any rectangle $R$.
Therefore by union bound,
\begin{align*}
    &P\left(\max_{R\in \cup_{d_0\leq d}\mathscr{R}_{d_0}}\Big|\frac{1}{n}\sum_{i=1}^n z_i\1({\bx_i\in R}) \Big|\geq \epsilon/2 \; \bigg| \;  X\right)\\
    \leq\;  &2\exp(G_n(\cup_{d_0\leq d}\mathscr{R}_{d_0}))\exp\left(-\frac{n\epsilon^2}{8}\right) \\
    \leq\; &2\exp\left(\log(nd(2np)^d) -\frac{n\epsilon^2}{8}\right)\to 0
\end{align*}
for any $\epsilon > 0$. Since the above upper bound on the probability is independent of $X$, we conclude that 
\[
\max_{R\in \cup_{d_0\leq d}\mathscr{R}_{d_0}}\Big|\frac{1}{n}\sum_{i=1}^n z_i\1({\bx_i\in R}) \Big| \pto 0.
\]
We now turn to the second term 
$\max_{R\in R\in \cup_{d_1>d}\mathscr{R}_{d_1}} \Big|\frac{1}{n}\sum_{i=1}^n z_i\1({\bx_i\in R}) \Big|$.
Let $\mathscr{R}^{s_0}$ be the set of rectangles with at most $s_0 = n/(\log n)^{1/2 + \delta_0}$ samples, then $\log |\mathscr{R}^{s_0}| \leq (s_0 + 1)\log n$.
By union bound,
\begin{align*}
    &P\left(\max_{R\in \mathscr{R}^{s_0}}\Big|\frac{1}{n}\sum_{i=1}^n z_i\1({\bx_i\in R}) \Big|\geq \epsilon/2 \; \bigg| \;  X\right)\\
    \leq\;  & 2\exp(\log |\mathscr{R}^{s_0}|)\exp\left(-\frac{n\epsilon^2}{8}\right)\\ \leq \; & 2\exp\left((s_0 + 1)\log n -\frac{n^2\epsilon^2}{8s_0}\right)\to 0.
\end{align*}
Therefore,
\[
\max_{R\in \mathscr{R}^{s_0}}\Big|\frac{1}{n}\sum_{i=1}^n z_i\1({\bx_i\in R}) \Big| \to 0.
\]
Hence, to prove \eqref{eq:noise_generalization}, it suffices to show that
$\cup_{d_1>d}\mathscr{R}_{d_1} \subset \mathscr{R}^{s_0}$
with probability tending to 1. 
Note that by definition of $\delta_0$, $\frac{1/2 + \delta_0}{1/2 - \delta_0} = 1 + \delta$. Therefore
\[
\left(\frac{\log(np)}{n}\right)^{\frac 12-\delta_0}(\log n)^{\frac 12 + \delta_0} = \left(\frac{\log(np)(\log n)^{1+\delta}}{n}\right)^{\frac 12-\delta_0} = \left(\frac{(\log n)^{2+\delta} + \log p(\log n)^{1+\delta}}{n}\right)^{\frac 12-\delta_0} \to 0.
\]
By \eqref{eq:rectangle_size_bound} and \eqref{eq:rectangle_size_deviation} we have
\[
\max_{R\in \cup_{d_1>d}\mathscr{R}_{d_1}} N_R \leq 
\max_{R\in \mathscr{R}_{d}} N_R = o\left(n\left(\frac{\log(np)}{n}\right)^{1/2-\delta_0}\right) = o(s_0).
\]
Therefore, $\max_{R\in \cup_{d_1>d}\mathscr{R}_{d_1}} N_R\leq s_0$ with probability tending to 1. The proof is now complete.
\end{proof}
Define population impurity decrease $\Delta_I(t)$ at a node $t$ to be
\begin{align}\label{eq:posMdi}
       \Delta_I(t) = \va(Y | R_t )- \frac{\mu(R_{t_l})}{\mu(R_t)} \va(Y | R_{t_l}) - \frac{\mu(R_{t_r})}{\mu(R_t)} \va(Y | R_{t_r}).
    \end{align}
Similar to Lemma \ref{lemma:two_impurities}, we know it is equivalent to:
\begin{align} \label{Eq:equivalent_impurity_decrease}
    \Delta_I(R_{t,l}(\gamma; k), R_{t,r}(\gamma; k)) = \frac{\mu(R_{t,l}(\gamma; k))\mu(R_{t,r}(\gamma; k))}{\mu(R_t(\gamma; k))}\Big[\mathbb E (Y|{X\in R_{t,l}(\gamma; k)}) - \mathbb E (Y|{X\in R_{t,r}(\gamma; k)})\Big]^2.
\end{align}
The following proposition shows that the finite-sample impurity decrease converges to the population impurity decrease uniformly.
\begin{proposition}\label{Prop:uniform_convergence} Suppose that 
constraint C4
%assumptions \ref{A:asym_n_p} and
and assumption \ref{A:balancedsplit} from the main text are satisfied. Then, we have the following two uniform convergence results:
\begin{enumerate}
    \item[a.] $\max_{R\in \mathscr{R}} \Big|\frac{N_R}{n} - \mu(R)\Big|\pto 0$,  
    \item[b.]$\underset{R_{t,l}, R_{t,r}\in \mathscr R}{\sup} \Big| \Delta_I^n(R_{t,l}, R_{t,r}) - \Delta_I(R_{t,l}, R_{t,r})\Big|\pto 0$. 
\end{enumerate}
\end{proposition}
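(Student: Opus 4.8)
Part (a) requires no new work: it is precisely the first conclusion of Proposition~\ref{prop:uniform_c}, which is available here since constraint~C4 and Assumption~\ref{A:balancedsplit} are in force. So all the effort goes into part (b). The plan is to first rewrite both impurity decreases in their ``product'' forms --- $\Delta_I^n$ via Lemma~\ref{lemma:two_impurities} and $\Delta_I$ via \eqref{Eq:equivalent_impurity_decrease} --- and then to compare them entry by entry. Writing $p_l = N_n(t_l)/n$, $p_r = N_n(t_r)/n$, $a_l = \frac1n\sum_{\bx_i\in R_{t,l}}y_i$, $a_r = \frac1n\sum_{\bx_i\in R_{t,r}}y_i$ and $q_l = \mu(R_{t,l})$, $q_r = \mu(R_{t,r})$, $m_l = \E(Y\1(X\in R_{t,l}))$, $m_r = \E(Y\1(X\in R_{t,r}))$, one has $\Delta_I^n(R_{t,l},R_{t,r}) = g(p_l,p_r,a_l,a_r)$ and $\Delta_I(R_{t,l},R_{t,r}) = g(q_l,q_r,m_l,m_r)$ for the single function $g(u,v,x,y) = \frac{uv}{u+v}\big(\frac{x}{u}-\frac{y}{v}\big)^2 = \frac{(xv-yu)^2}{uv(u+v)}$. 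Under the LSS model $|Y|$ is a.s. bounded by a constant $C_Y$ (constraint~C2), so $|a_l|,|a_r|,|m_l|,|m_r|\le C_Y$, the averages $a_l/p_l, m_l/q_l,\dots$ all lie in $[-C_Y,C_Y]$, and, bounding the prefactor by $\tfrac12\min(\cdot,\cdot)$, one gets $\Delta_I^n\le 2C_Y^2\,N_n(t)/n$ and $\Delta_I\le 2C_Y^2\,\mu(R_t)$ for every split (with the convention $\Delta_I^n=0$ when a child region is empty). The two uniform laws of large numbers in Proposition~\ref{prop:uniform_c} give $\sup_{R\in\mathscr R}|N_R/n-\mu(R)|\pto 0$ and $\sup_{R\in\mathscr R}|\frac1n\sum_{\bx_i\in R}y_i-\E(Y\1(X\in R))|\pto 0$, i.e. $p_R\to q_R$ and $a_R\to m_R$ uniformly over $\mathscr R$.

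The main obstacle is that $\mathscr R$ contains arbitrarily deep, hence arbitrarily small, rectangles; on these $N_n(t_l)$ may be $0$ and $g$ fails to be continuous in $(p_l,p_r)$ at the boundary, so no single Lipschitz estimate can cover the whole supremum. I would deal with this by splitting $\mathscr R$ into two regimes at a threshold $\delta>0$ chosen after $\epsilon$. In the \emph{small} regime $\mu(R_t)\le\delta$: here $\Delta_I\le 2C_Y^2\delta$, and on the event $\{\sup_R|N_R/n-\mu(R)|<\delta\}$ (probability $\to1$) also $N_n(t)/n\le 2\delta$, hence $\Delta_I^n\le 4C_Y^2\delta$, so $|\Delta_I^n-\Delta_I|\le 6C_Y^2\delta<\epsilon/2$ once $\delta<\epsilon/(12C_Y^2)$. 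In the \emph{large} regime $\mu(R_t)>\delta$: Assumption~\ref{A:balancedsplit} applied to the split of $R_t$ yields the elementary bound $q_l\wedge q_r>C_\gamma\,\mu(R_t)>C_\gamma\delta=:\delta'$, and on the event $\{\sup_R|N_R/n-\mu(R)|<\delta'/2\}$ also $p_l,p_r>\delta'/2$; then $(p_l,p_r,q_l,q_r)\in[\delta'/2,1]^2$ and $(a_l,a_r,m_l,m_r)\in[-C_Y,C_Y]^2$, a compact box on which $g$ is $C^1$ (its denominator is bounded below by $(\delta'/2)^3$) and hence Lipschitz with some constant $L=L(\delta,C_\gamma,C_Y)$, so $|\Delta_I^n-\Delta_I|\le L\big(|p_l-q_l|+|p_r-q_r|+|a_l-m_l|+|a_r-m_r|\big)\le 4L\cdot o_P(1)<\epsilon/2$ with probability $\to1$, uniformly over all such $R_t$.

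Putting the two regimes together, for every $\epsilon>0$ we get $\sup_{R_{t,l},R_{t,r}\in\mathscr R}|\Delta_I^n(R_{t,l},R_{t,r})-\Delta_I(R_{t,l},R_{t,r})|<\epsilon$ on an event whose probability tends to one; since $\epsilon$ is arbitrary, this is conclusion (b). The points needing care in the full write-up are: justifying the empty-child convention so that the bound $\Delta_I^n\le 2C_Y^2 N_n(t)/n$ holds verbatim; verifying $q_l\wedge q_r>C_\gamma\mu(R_t)$ from the balanced-split ratio; and checking that the partial derivatives of $g$ on $[\delta'/2,1]^2\times[-C_Y,C_Y]^2$ are bounded by a constant depending only on $\delta'$ and $C_Y$. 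None of these is deep --- the genuine difficulty is purely the two-regime split that tames the non-uniformity caused by tiny rectangles.
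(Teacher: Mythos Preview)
Your proposal is correct, and it takes a more explicit route than the paper. The paper's proof of part~(b) is very terse: it writes $f(x_1,x_2,y_1,y_2)=\frac{x_1x_2}{x_1+x_2}(y_1-y_2)^2$ with $y_1,y_2$ the \emph{conditional averages}, asserts that $f$ is Lipschitz on all of $[0,1]^2\times[-C_Y-1,C_Y+1]^2$ (true: all four partials are bounded, because the prefactor $\frac{x_1x_2}{x_1+x_2}\le\min(x_1,x_2)$ tames the singularity at the origin), and then invokes ``the continuous mapping theorem'' together with the two uniform laws of large numbers in Proposition~\ref{prop:uniform_c}. As written, this glosses over exactly the issue you isolate: the conditional averages $\bar y_l=a_l/p_l$ need not converge uniformly to $E_l=m_l/q_l$ over $\mathscr R$ when the regions shrink, so a naive Lipschitz bound $L\sum|\cdot-\cdot|$ does not close directly. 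Your two-regime split --- small $\mu(R_t)$: both $\Delta_I^n$ and $\Delta_I$ are $O(\delta)$; large $\mu(R_t)$: Assumption~\ref{A:balancedsplit} forces $q_l\wedge q_r>C_\gamma\delta$, hence all arguments of $g$ lie in a fixed compact box where $g$ is $C^1$ --- resolves this cleanly and self-containedly. An alternative closer to the paper's spirit is to exploit that the $y$-partials of $f$ carry an extra factor $\frac{x_1x_2}{x_1+x_2}\le p_l$, and that $p_l\,|\bar y_l-E_l|\le|a_l-m_l|+C_Y|p_l-q_l|\to0$ uniformly; this avoids the regime split but requires the same care. Either way works; your version makes the non-uniformity and its resolution explicit, which is a genuine improvement over the paper's sketch.
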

\begin{proof}
\textbf{a.} This follow directly from Proposition \ref{prop:uniform_c}.

\textbf{b.} Let $f(x_1, x_2, y_1, y_2) = \frac{x_1x_2}{x_1+x_2}(y_1-y_2)^2$. Then $f$ is a Lipschitz function on $[0, 1]\times[0, 1]\times[-C_Y-1, C_Y+1]\times[-C_Y-1, C_Y+1]$.
Use the fact that 
$\max_{R\in \mathscr{R}} \Big|\frac{1}{n}\sum_{i=1}^n y_i\1({\bx_i\in R}) - \mathbb E (Y\cdot\1({X\in R}))\Big| \pto 0$ in Proposition \ref{prop:uniform_c} and the fact $\max_{R\in \mathscr{R}} \Big|\frac{N_R}{n} - \mu(R)\Big|\pto 0$ in a., by the continuous mapping theorem, we have
\[
\underset{R_{t,l}, R_{t,r}\in \mathscr R}{\sup} \Big| \Delta_I^n(R_{t,l}, R_{t,r}) - \Delta_I(R_{t,l}, R_{t,r})\Big|\pto 0.
\]
\end{proof}

Now we analyze the impurity decrease at each node of a tree. We consider three families of trees: $\mathcal{T}_0$, $\mathcal T_1$ and $\mathcal T_2$:
\[
\mathcal T_0 \triangleq \{\text{Any tree that satisfies \ref{A:balancedsplit}}\}.
\]

\[
\mathcal T_1 \triangleq \{\text{Any CART tree that satisfies \ref{A:balancedsplit} and \ref{A:no_bootstrap}}\}.
\]
\[
\mathcal T_2 \triangleq \{\text{Any CART tree that satisfies \ref{A:balancedsplit}, \ref{A:no_bootstrap}, and \ref{A:mtry}}\}.
\]
$\mathcal T_1$ is the family of CART trees that satisfy our assumptions but $\mt$ can be arbitrary. $\mathcal T_1$ is more restricted than $\mathcal T_0$ in the sense that the threshold $\gamma_t$ of any node $t$ of any tree in $\mathcal T_1$ must maximize the finite sample impurity decrease in \eqref{eq:impdecrease}. Thus, $\mathcal T_1$ depends on the data. 
For any $T\in \mathcal T_0$ and any $t\in T$ such that $\mathring U(t)\neq \emptyset$, its region $R_t$ is a rectangle:
\begin{align}
    R_t = \{x\in \mathbb R^p| \forall \ell\in [p], c_{low, \ell} < x_\ell \leq c_{high, \ell}\}.
\end{align}
where $c_{low, \ell}, c_{high,\ell} \in [0,1]$. 

By the definition of desirable feature set $U(t)$ in \eqref{eq:desirableFeatures}, we have its equivalent formula:
\[
U(t) \triangleq \cup_{j\in [J]:S_j^+\cap \parent^\pm(t) = \emptyset}S_j / \parent(t).
\] 
Define the set of noisy features to be its complement: $[p] / U(t)$. We also define 
\[
\mathring U(t) \triangleq \cup_{j\in [J]:S_j^+\cap \dot \parent^\pm(t) = \emptyset}S_j / \parent(t).
\]
Since $\parent^\pm(t) \subset \dot \parent^\pm(t)$, $\mathring U(t)\subset U(t)$. For any $\gamma$, denote $R_{t,l}(\gamma; k) = R_t\cap \{X| X_k\leq \gamma\}$ and $R_{t,r}(\gamma; k) = R_t\cap \{X| X_k > \gamma\}$.
First, for any node $t\in T$ and any $k\in \mathring U(t)$, we have a characterization for the impurity decrease:
\begin{lemma}\label{Lemma:impurity_decrease} For any $T\in \mathcal T_0$, $t\in T$, $j\in[J]$, $k\in S_j\cap U(t)$, and $\gamma \in (0,1)$, 
\begin{align*}
& \Delta_I(R_{t,l}(\gamma;k), R_{t,r}(\gamma;k)) \\
=& \mu(R_t)\cdot \beta_{j}^2P(\forall \; \ell\in S_j/\{k\},\; X_\ell \leq \gamma_{\ell}| X\in R_t)^2\cdot 
\Big(\1(\gamma \leq \gamma_{k})\cdot \frac{(1 - \gamma_{k})^2\gamma}{(1 -\gamma)} + \1(\gamma > \gamma_{k}) \cdot \frac{\gamma_{k}^2(1-\gamma)}{\gamma}
\Big).
\end{align*}
\end{lemma}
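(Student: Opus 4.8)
The plan is to evaluate the closed form \eqref{Eq:equivalent_impurity_decrease} of the population impurity decrease directly, using only uniformity (C1) and non-overlap (C3). The first step is geometric. From the definition of $U(t)$ in \eqref{eq:desirableFeatures}, $k\in S_j\cap U(t)$ implies that neither $(k,+1)$ nor $(k,-1)$ lies in $\parent^\pm(t)$, hence $k\notin\parent(t)$; that is, feature $k$ is never split on the path from the root to $t$, so the $k$-th side of the rectangle $R_t$ is the full interval $(0,1]$. Under C1 this gives $\mu(R_{t,l}(\gamma;k))=\gamma\,\mu(R_t)$ and $\mu(R_{t,r}(\gamma;k))=(1-\gamma)\,\mu(R_t)$, so the prefactor $\mu(R_{t,l})\mu(R_{t,r})/\mu(R_t)$ in \eqref{Eq:equivalent_impurity_decrease} equals $\gamma(1-\gamma)\mu(R_t)$.

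The second step computes the bracketed difference of conditional means. Inserting the regression function \eqref{eq:E_lssmodel_simple} and using that conditioning on an axis-aligned rectangle preserves coordinate independence, one has $\E(Y\mid X\in R)=\beta_0+\sum_{j'}\beta_{j'}\prod_{\ell\in S_{j'}}P(X_\ell\le\gamma_\ell\mid X\in R)$ for any such $R$. Subtracting the values at $R_{t,l}$ and $R_{t,r}$, the $\beta_0$ term cancels; for every $j'\neq j$, non-overlap (C3) gives $k\notin S_{j'}$, and since $R_{t,l}$ and $R_{t,r}$ differ only in coordinate $k$ while the product for $S_{j'}$ involves only coordinates in $S_{j'}$, that product is the same in both children and also cancels. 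Only the $j'=j$ term survives, which I would factor as $P(X_k\le\gamma_k\mid X\in R)\cdot q$ with $q:=P(\forall\,\ell\in S_j\setminus\{k\},\,X_\ell\le\gamma_\ell\mid X\in R_t)$ (with $q=1$ when $S_j=\{k\}$), noting that $q$ is common to both children. A short case split then finishes it: $P(X_k\le\gamma_k\mid X\in R_{t,l}(\gamma;k))$ equals $1$ if $\gamma\le\gamma_k$ and $\gamma_k/\gamma$ otherwise, while $P(X_k\le\gamma_k\mid X\in R_{t,r}(\gamma;k))$ equals $0$ if $\gamma\ge\gamma_k$ and $(\gamma_k-\gamma)/(1-\gamma)$ otherwise, so the difference of conditional means is $\beta_j q\,(1-\gamma_k)/(1-\gamma)$ when $\gamma\le\gamma_k$ and $\beta_j q\,\gamma_k/\gamma$ when $\gamma>\gamma_k$. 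Squaring and multiplying by the prefactor $\gamma(1-\gamma)\mu(R_t)$ reproduces exactly the two summands $\1(\gamma\le\gamma_k)\frac{(1-\gamma_k)^2\gamma}{1-\gamma}$ and $\1(\gamma>\gamma_k)\frac{\gamma_k^2(1-\gamma)}{\gamma}$, which is the claim.

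The computation itself is routine; the only parts that need attention are the two structural inputs rather than any algebra. First, the definition-chase showing that $k\in U(t)$ leaves coordinate $k$ of $R_t$ unconstrained, which is what pins the volume ratios to exactly $\gamma$ and $1-\gamma$. Second, the observation that conditioning on a rectangle preserves coordinate independence, which, combined with C3, is precisely what makes every interaction $j'\neq j$ drop out of the difference of conditional means. Beyond that, it is just the case analysis on the sign of $\gamma-\gamma_k$.
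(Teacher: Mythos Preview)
Your proposal is correct and follows essentially the same route as the paper's proof: the same geometric observation that $k\in U(t)$ leaves the $k$-th side of $R_t$ equal to $(0,1]$, the same cancellation of the $j'\neq j$ terms via non-overlap, and the same case split on $\gamma\lessgtr\gamma_k$ for the $k$-coordinate probabilities. The paper packages the independence argument with auxiliary events $A_{j'},B,C_k$ whereas you phrase it as ``conditioning on a rectangle preserves coordinate independence,'' but the content is identical.
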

\begin{proof}[Proof of Lemma \ref{Lemma:impurity_decrease}]
Since $k \in U(t)$, we know that $k$ is not in $\parent (t)$. That means any of $t$'s parents do not split on $k$. In other words,  $R_t$ does not have any constraints for feature $k$, i.e., $c_{low, k} = 0$ and $c_{high, k} = 1$.
Thus, we know that
\begin{align}
    \mu(R_{t,l}(\gamma; k)) =\mu(R_t) \cdot \gamma \label{Eq:left_region}
\end{align} 
and 
\begin{align}
\mu(R_{t,r}( \gamma; k)) =\mu(R_t) \cdot (1 - \gamma).\label{Eq:right_region}    
\end{align}
Recall that $\Delta_I$ in \eqref{eq:posMdi} has its equivalent formula \eqref{Eq:equivalent_impurity_decrease}:
\begin{align*} 
\Delta_I(R_{t,l}(\gamma; k), R_{t,r}(\gamma; k)) = \frac{\mu(R_{t,l}(\gamma; k))\mu(R_{t,r}(\gamma; k))}{\mu(R_t(\gamma; k))}\Big[\mathbb E (Y|{X\in R_{t,l}(\gamma; k)}) - \mathbb E (Y|{X\in R_{t,r}(\gamma; k)})\Big]^2
\end{align*}
where the conditional expectations are
\begin{align}\label{Eq:conditional_expect_l}
    \E(Y|X\in R_{t,l}(\gamma; k)) = \sum_{j'=1}^J \beta_{j'} P\left(\forall \ell \in S_{j'},\; X_\ell\leq \gamma_{\ell} \Big| X\in R_{t,l}( \gamma; k)\right),
\end{align}
and
\begin{align}\label{Eq:conditional_expect_r}
    \E(Y|X\in R_{t,r}(\gamma; k)) = \sum_{j'=1}^J \beta_{j'} P\left(\forall \ell \in S_{j'},\; X_\ell\leq \gamma_{\ell} \Big| X\in R_{t,r}( \gamma; k)\right).
\end{align}

Now we will analyze \eqref{Eq:conditional_expect_l} and \eqref{Eq:conditional_expect_r}. To ease the notations, we define the following three events:
\begin{align}
    A_{j'} = & \{X_\ell\leq \gamma_{\ell}, \, \forall \ell \in S_{j'}\},\\
    B = & \{X\in R_t\},\\
    C_{k} = & \{X_k \leq \gamma\}.
\end{align}
Then \eqref{Eq:conditional_expect_l} becomes $\sum_{j'=1}^J \beta_{j'}P(A_{j'}|BC_k)$. 
Because $R_t$ has no constraints on $k$, $B$ does not involve feature $k$. When $j'\neq j$ (namely, $k\not\in S_{j'}$), $A_{j'}$ also does not involve feature $k$. Thus, $C_k$ is independent of $(A_{j'}, B)$, which implies $P(A_{j'}|BC_k) = \frac{P(A_{j'}BC_k)}{P(BC_k)} = \frac{P(A_{j'}B)P(C_k)}{P(B)P(C_k)} = P(A_{j'}|B)$. Similarly this holds for \eqref{Eq:conditional_expect_r}. Therefore, when $j'\neq j$:
\begin{align*}
P\left(\forall \ell \in S_{j'},\; X_\ell\leq \gamma_{\ell} \Big| X\in R_{t,l}( \gamma; k)\right) & = P\left(\forall \ell \in S_{j'},\; X_\ell\leq \gamma_{\ell} \Big| X\in R_{t,r}( \gamma; k)\right).
\end{align*}
When $j' = j$, 
\begin{align*}
& P\left(\forall \ell \in S_{j},\; X_\ell\leq \gamma_{\ell} \Big| X\in R_{t,l}( \gamma; k)\right) - P\left(\forall \ell \in S_{j},\; X_\ell\leq \gamma_{\ell} \Big| X\in R_{t,r}( \gamma; k)\right)\\
(X_k\text{ is ind. of }X_\ell \text{ for } \ell\neq k)=&P(\forall \; \ell\in S_j/\{k\},\; X_\ell \leq \gamma_{\ell}| X\in R_t)\cdot\\
& \qquad\qquad\qquad \Big(P(X_k\leq \gamma_{k}|X_k\leq \gamma) - P(X_k \leq \gamma_{k}|X_k> \gamma)\Big)\\
=&P(\forall \; \ell\in S_j/\{k\},\; X_\ell \leq \gamma_{\ell}| X\in R_t)\cdot \\
&\qquad\qquad
\Big(\1(\gamma \leq \gamma_{k})\cdot \frac{1 - \gamma_{k}}{1 -\gamma} + \1(\gamma > \gamma_{k}) \cdot \frac{\gamma_{k}}{\gamma}
\Big).
\end{align*}
Therefore, \eqref{Eq:equivalent_impurity_decrease} becomes: 
\begin{align}
& \frac{\mu(R_{t,l}(\gamma;k))\mu(R_{t,r}(\gamma;k))}{\mu(R_t)}\Big(\mathbb E(Y|X\in R_{t,l}(\gamma;k)) - \mathbb E(Y|X\in R_{t,r}(\gamma;k))\Big)^2\nonumber\\
&=\mu(R_t)\gamma(1-\gamma)\cdot \beta_{j}^2P(\forall \; \ell\in S_j/\{k\},\; X_\ell \leq \gamma_{\ell}| X\in R_t)^2\cdot \nonumber\\
&\qquad\qquad\qquad\qquad
\Big(\1(\gamma \leq \gamma_{k})\cdot \frac{(1 - \gamma_{k})^2}{(1 -\gamma)^2} + \1(\gamma > \gamma_{k}) \cdot \frac{\gamma_{k}^2}{\gamma^{2}}
\Big)\nonumber\\
&=\mu(R_t)\cdot \beta_{j}^2P(\forall \; \ell\in S_j/\{k\},\; X_\ell \leq \gamma_{\ell}| X\in R_t)^2\cdot \nonumber\\
&\qquad\qquad\qquad\qquad
\Big(\1(\gamma \leq \gamma_{k})\cdot \frac{(1 - \gamma_{k})^2\gamma}{(1 -\gamma)} + \1(\gamma > \gamma_{k}) \cdot \frac{\gamma_{k}^2(1-\gamma)}{\gamma}
\Big).\nonumber
\end{align}
That completes the proof.
\end{proof}
\begin{lemma}\label{Lemma:sufficient_overlap}
For $T\in \mathcal T_0$, $t\in T$, if there exists $j\in [J]$ and $k\in S_j$ such that $k\in \mathring U(t)$, then 
\[
P(\forall \; \ell\in S_j/\{k\},\; X_\ell \leq \gamma_{\ell}| X\in R_t) \geq C_\gamma^{s_j - 1}.
\]
\end{lemma}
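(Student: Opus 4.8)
The plan is to reduce the conditional probability to a product of one‑dimensional conditional probabilities using the uniformity assumption C1, and then to bound each factor below by $C_\gamma$ using the threshold bound \eqref{eq:Cgamma} together with the structural information that $k \in \mathring U(t)$ encodes about the rectangle $R_t$.

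First I would pin down the interaction index. By the non‑overlap assumption C3 there is a unique $j \in [J]$ with $k \in S_j$, and since $k \in \mathring U(t) = \cup_{j':\, S_{j'}^+\cap \dot\parent^\pm(t)=\emptyset}\, S_{j'}/\parent(t)$, this $j$ must satisfy $S_j^+ \cap \dot\parent^\pm(t) = \emptyset$. Consequently, for \emph{every} $\ell \in S_j$ the signed feature $(\ell,+1)$ is absent from the path from the root to $t$, i.e.\ no split along that path sent the path to the $>$‑side on coordinate $\ell$. Writing the side‑intervals of the hyper‑rectangle $R_t$ as $(c_{low,\ell}, c_{high,\ell}]$, this forces $c_{low,\ell} = 0$ and $c_{high,\ell} \in (0,1]$ for all $\ell \in S_j$ (positivity of $c_{high,\ell}$ holds because $R_t$ has positive volume).

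Second, under C1 the coordinates of $X$ are independent and uniform, so the conditional law of $X$ given $X \in R_t$ is the product of the uniform laws on the side‑intervals. The event $\{\forall \ell \in S_j/\{k\},\ X_\ell \leq \gamma_\ell\}$ involves only coordinates in $S_j/\{k\}$, hence
\[
P\!\left(\forall \ell \in S_j/\{k\},\ X_\ell \leq \gamma_\ell \;\middle|\; X \in R_t\right) = \prod_{\ell \in S_j/\{k\}} P\!\left(X_\ell \leq \gamma_\ell \;\middle|\; 0 < X_\ell \leq c_{high,\ell}\right).
\]
Each factor equals $\min(\gamma_\ell / c_{high,\ell},\, 1)$, which is at least $\gamma_\ell$ because $c_{high,\ell}\leq 1$, and $\gamma_\ell > C_\gamma$ by \eqref{eq:Cgamma}. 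Multiplying the $|S_j|-1 = s_j - 1$ factors yields the claimed bound $C_\gamma^{s_j-1}$ (the empty product being $1$ in the degenerate case $s_j = 1$).

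I do not expect a genuine obstacle; the calculations are elementary. The only points that need care are (i) parsing the definition of $\mathring U(t)$ carefully so as to conclude $S_j^+ \cap \dot\parent^\pm(t) = \emptyset$ for the \emph{whole} of $S_j$ and not just for the feature $k$ — this is exactly where C3 is used, to make the index $j$ unambiguous — and (ii) handling the boundary case $c_{high,\ell} < \gamma_\ell$, in which the one‑dimensional conditional probability is identically $1$ and the $\min$ in the bound above is attained by its second argument.
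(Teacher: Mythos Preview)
Your proposal is correct and follows essentially the same approach as the paper: deduce from $k\in\mathring U(t)$ (via C3) that $S_j^+\cap\dot\parent^\pm(t)=\emptyset$, hence $c_{low,\ell}=0$ for all $\ell\in S_j$, then use C1 to factor the conditional probability over coordinates and bound each factor $\min(\gamma_\ell/c_{high,\ell},1)\ge\gamma_\ell>C_\gamma$. The paper writes the same computation as a ratio of Lebesgue volumes and uses the equivalent inequality $\min(c_{high,\ell},\gamma_\ell)\ge c_{high,\ell}\cdot\gamma_\ell$, but the content is identical; your version has the small merit of making the role of C3 explicit, which the paper leaves implicit.
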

\begin{proof}[Proof of Lemma \ref{Lemma:sufficient_overlap}]
Because $k \in S_j$ and $k\in \mathring U(t)$, we know that $S_j^+\cap \dot \parent^\pm(t) = \emptyset$. That means node $t$ is not at the right branch of any node that splits on features in $S_j$. Thus,
\begin{align}\label{Eq:lower_bound_is_zero}
    c_{low,\ell} = & 0 \text{ when }\ell\in S_j.
\end{align}
Also, $c_{high,k} = 1$ and $c_{low,k} = 0$ because $k\in \mathring U(t)$. 
Then, $P(\forall \; \ell\in S_j/\{k\},\; X_\ell \leq \gamma_{\ell}| X\in R_t)$ is
\begin{align}
    & \frac{P (\forall \ell \in S_{j}/\{k\}\; X_\ell\leq \gamma_{\ell}, X\in R_t)}{\mu(R_t)} \nonumber\\
    (\text{Due to \eqref{Eq:lower_bound_is_zero}})= & \frac{\prod_{\ell\in [p]/S_{j}} (c_{high,\ell} - c_{low,\ell})\prod_{\ell\in S_{j}/\{k\}}\min(c_{high,\ell}, \gamma_{\ell})}{\mu(R_t)}\nonumber\\
    \geq & \frac{\prod_{\ell\in [p]/S_{j}} (c_{high,\ell} - c_{low,\ell})\prod_{\ell\in S_{j}/\{k\}}c_{high,\ell}\cdot \gamma_{\ell}}{\mu(R_t)}\nonumber\\
    = & \frac{\mu(R_t) \cdot \prod_{\ell\in S_{j}/\{k\}} \gamma_{\ell}}{\mu(R_t)}\nonumber\\
    \geq & C_\gamma^{s_{j} - 1}. \nonumber
\end{align}
That completes the proof.
\end{proof}
\begin{lemma}\label{Lemma:5} Suppose that constraint C4
%assumption \ref{A:asym_n_p} 
from the main text holds. Then, for any fixed $\epsilon > 0$ it holds true that
\begin{align}\label{Eq:desirable_feature_lower_bound}
    P\left(\inf_{T\in \mathcal T_0}\min_{\begin{array}{c} t\in T, \mu(R_t) \geq \epsilon,\\\mathring U(t)\neq \emptyset\end{array}}\min_{k\in \mathring U(t)}\sup_{\gamma\in [C_\gamma, 1 - C_\gamma]} \Delta_I^n(R_{t,l}(\gamma; k), R_{t,r}(\gamma; k)) > \frac{\epsilon}{4}C_\beta^2C_\gamma^{2\max_j s_j - 1}\right) \rightarrow 1.
\end{align}
\end{lemma}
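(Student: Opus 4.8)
The plan is to deduce the bound from a deterministic lower estimate on the \emph{population} impurity decrease, combined with the uniform convergence already established in Proposition~\ref{Prop:uniform_convergence}. The crucial point is that for a desirable feature $k\in\mathring U(t)$ one is always free to use the true LSS threshold $\gamma_k$ as the split threshold, and $\gamma_k\in(C_\gamma,1-C_\gamma)$ by \eqref{eq:Cgamma}, so $\gamma_k$ is an admissible choice inside the supremum appearing in \eqref{Eq:desirable_feature_lower_bound}.

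First I fix an arbitrary $T\in\mathcal T_0$, a node $t\in T$ with $\mu(R_t)\ge\epsilon$ and $\mathring U(t)\ne\emptyset$, and a feature $k\in\mathring U(t)$. Since $\mathring U(t)\subset U(t)$, and since by the non-overlap condition C3 there is a unique $j\in[J]$ with $k\in S_j$, Lemma~\ref{Lemma:impurity_decrease} applies with $\gamma=\gamma_k$. Evaluating its formula at $\gamma=\gamma_k$, the last factor becomes $\gamma_k(1-\gamma_k)$, and because $\gamma_k\in(C_\gamma,1-C_\gamma)$ with $C_\gamma<1/2$ one has $\gamma_k(1-\gamma_k)\ge C_\gamma(1-C_\gamma)>C_\gamma/2$. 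Bounding the remaining factors by $\mu(R_t)\ge\epsilon$, by $\beta_j^2>C_\beta^2$ from \eqref{eq:Cbeta}, and by $P(\forall\ell\in S_j\setminus\{k\},\,X_\ell\le\gamma_\ell\mid X\in R_t)^2\ge C_\gamma^{2(s_j-1)}$ from Lemma~\ref{Lemma:sufficient_overlap}, I obtain the deterministic estimate
\[
\Delta_I\!\left(R_{t,l}(\gamma_k;k),\,R_{t,r}(\gamma_k;k)\right)\;>\;\frac{\epsilon}{2}\,C_\beta^2\,C_\gamma^{2s_j-1}\;\ge\;\frac{\epsilon}{2}\,C_\beta^2\,C_\gamma^{2\max_{j'}s_{j'}-1}\;=:\;2\kappa ,
\]
where I used $C_\gamma<1$ and $s_j\le\max_{j'}s_{j'}$, and $\kappa:=\tfrac{\epsilon}{4}C_\beta^2C_\gamma^{2\max_j s_j-1}$ is a fixed positive constant. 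This estimate is uniform over all admissible triples $(T,t,k)$.

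Next I transfer this to the finite-sample impurity. I first note that $\bigl(R_{t,l}(\gamma_k;k),R_{t,r}(\gamma_k;k)\bigr)\in\mathscr R$: because $k\in\mathring U(t)$ the region $R_t$ carries no constraint on coordinate $k$, so splitting $R_t$ at $\gamma_k$ on feature $k$ yields children whose volume ratio is $\gamma_k/(1-\gamma_k)$ or its reciprocal, which exceeds $C_\gamma/(1-C_\gamma)$ since $\gamma_k>C_\gamma$ and $1-\gamma_k>C_\gamma$; hence this extra split obeys Assumption~\ref{A:balancedsplit}, and its children are obtained from $[0,1]^p$ by balanced splits, i.e.\ belong to $\mathscr R$. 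By Proposition~\ref{Prop:uniform_convergence}(b), $\sup_{R_{t,l},R_{t,r}\in\mathscr R}\bigl|\Delta_I^n(R_{t,l},R_{t,r})-\Delta_I(R_{t,l},R_{t,r})\bigr|\pto 0$, so the event $\mathcal E_n:=\bigl\{\sup_{R_{t,l},R_{t,r}\in\mathscr R}|\Delta_I^n-\Delta_I|<\kappa\bigr\}$ satisfies $P(\mathcal E_n)\to1$. On $\mathcal E_n$, for every admissible $(T,t,k)$,
\[
\sup_{\gamma\in[C_\gamma,1-C_\gamma]}\Delta_I^n\!\left(R_{t,l}(\gamma;k),R_{t,r}(\gamma;k)\right)\;\ge\;\Delta_I^n\!\left(R_{t,l}(\gamma_k;k),R_{t,r}(\gamma_k;k)\right)\;>\;2\kappa-\kappa\;=\;\kappa ,
\]
which is precisely the event whose probability is claimed to tend to $1$ in \eqref{Eq:desirable_feature_lower_bound}; hence that probability is at least $P(\mathcal E_n)\to1$.

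I do not expect a genuine obstacle here: once the population impurity has been computed (Lemmas~\ref{Lemma:impurity_decrease} and~\ref{Lemma:sufficient_overlap}) and the uniform law of large numbers over rectangles is in hand (Proposition~\ref{Prop:uniform_convergence}), the statement follows from the constant bookkeeping above. The only two points that require care are (i) checking that the specific split at $\gamma=\gamma_k$ satisfies the balanced-split assumption, so that Proposition~\ref{Prop:uniform_convergence} covers its children; and (ii) arranging the constants so that the population lower bound ($2\kappa$) leaves a margin of $\kappa$ to absorb the uniform fluctuation. Both are routine.
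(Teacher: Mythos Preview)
Your argument is correct and follows essentially the same route as the paper's proof: reduce the claim to a deterministic lower bound on the population impurity $\Delta_I$ at the true threshold $\gamma=\gamma_k$ via Lemmas~\ref{Lemma:impurity_decrease} and~\ref{Lemma:sufficient_overlap}, then transfer to $\Delta_I^n$ by the uniform convergence of Proposition~\ref{Prop:uniform_convergence}. Your explicit check that the split at $\gamma_k$ satisfies Assumption~\ref{A:balancedsplit} (so that the child rectangles lie in $\mathscr R$) is a detail the paper leaves implicit, but otherwise the two arguments coincide.
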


\begin{proof}
First of all, we know from Proposition \ref{Prop:uniform_convergence} that $\sup_{R_t\in \mathscr{R}}|\Delta_I^n(R_t) - \Delta_I(R_t)|\pto 0$. Thus, in order to prove \eqref{Eq:desirable_feature_lower_bound}, we only need to show that 
\begin{align}\label{Eq:to_prove}
    \inf_{T\in \mathcal T_0}\min_{\begin{array}{c} t\in T, \mu(R_t) \geq \epsilon,\\ \mathring U(t)\neq \emptyset\end{array}}\min_{k\in \mathring U(t)} \Delta_I(R_{t,l}(\gamma_{k}; k), R_{t,r}(\gamma_{k}; k)) > \frac{\epsilon}{2}C_\beta^2C_\gamma^{2\max_j s_j - 1}.
\end{align}
Recall that $\gamma_{k}$ is the ground-truth threshold of feature $k$ in the interaction. Here we can drop $\max_{\gamma\in [C_\gamma, 1 - C_\gamma]}$ and use $\gamma_k$ because that results in a lower bound of the previous equation.
Based on Lemma \ref{Lemma:impurity_decrease}, we know that 
\begin{align*}
& \Delta_I(R_{t,l}(\gamma_k;k), R_{t,r}(\gamma_k;k)) \\
=& \mu(R_t)\cdot \beta_{j}^2P(\forall \; \ell\in S_j/\{k\},\; X_\ell \leq \gamma_{\ell}| X\in R_t)^2\cdot (1 - \gamma_{k})\gamma_k\\
\geq & \frac{1}{2}C_\gamma C_\beta^2 \epsilon\cdot P(\forall \; \ell\in S_j/\{k\},\; X_\ell \leq \gamma_{\ell}| X\in R_t)^2.
\end{align*}
The second inequality is due to $\mu(R_t)\geq \epsilon$, $\gamma_{k}(1-\gamma_{k})\geq C_\gamma(1-C_\gamma) \geq \frac{1}{2} C_\gamma$ and $\beta_j \geq C_\beta$.
Then using Lemma \ref{Lemma:sufficient_overlap} leads to the conclusion.
\end{proof}
For a node $t$, denote $\gamma^*_{t,k} = \mathrm{argmax}_{\gamma\in [C_\gamma, 1-C_\gamma]} \Delta_I^n(R_{t,l}(\gamma;k), R_{t,r}( \gamma;k))$.
\begin{lemma}\label{lemma:threshold} Suppose that constraint C4
%assumption \ref{A:asym_n_p} 
from the main text holds true, then we have
\[\sup_{T\in \mathcal T_0}\max_{\begin{array}{c} t\in T, \mu(R_t)\geq \epsilon,\\ \mathring U(t)\neq \emptyset\end{array}} \max_{k\in \mathring U(t)} |\gamma^*_{t,k} - \gamma_{k}|\pto 0.
\]
\end{lemma}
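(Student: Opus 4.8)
The plan is to reduce the statement to two ingredients that are already available: the uniform convergence of the empirical impurity decrease to its population version (Proposition~\ref{Prop:uniform_convergence}(b)), and a quantitative ``well-separated maximum'' property of the \emph{population} impurity landscape, read off from the closed form in Lemma~\ref{Lemma:impurity_decrease}. First I would fix $\epsilon>0$ and, for a node $t$ of some $T\in\mathcal T_0$ with $\mu(R_t)\ge\epsilon$, $\mathring{U}(t)\neq\emptyset$, and $k\in\mathring{U}(t)$, write $k\in S_j$ for the (unique, by C3) basic interaction containing $k$. Since $\mathring U(t)\subset U(t)$, Lemma~\ref{Lemma:impurity_decrease} applies and gives
\[
\Delta_I\bigl(R_{t,l}(\gamma;k),R_{t,r}(\gamma;k)\bigr)=A_{t,k}\cdot g(\gamma),\qquad g(\gamma):=\1(\gamma\le\gamma_k)\frac{(1-\gamma_k)^2\gamma}{1-\gamma}+\1(\gamma>\gamma_k)\frac{\gamma_k^2(1-\gamma)}{\gamma},
\]
where $A_{t,k}:=\mu(R_t)\,\beta_j^2\,P\bigl(\forall\,\ell\in S_j\setminus\{k\},\,X_\ell\le\gamma_\ell\mid X\in R_t\bigr)^2$. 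Using $\mu(R_t)\ge\epsilon$, $|\beta_j|\ge C_\beta$ from \eqref{eq:Cbeta}, and Lemma~\ref{Lemma:sufficient_overlap}, one obtains the index-uniform lower bound $A_{t,k}\ge a_0:=\epsilon\,C_\beta^2\,C_\gamma^{2\max_j s_j-2}>0$.

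Next I would record the elementary shape of $g$ on the compact window $[C_\gamma,1-C_\gamma]$, which (by \eqref{eq:Cgamma}) contains $\gamma_k$. The function $g$ is continuous on $[0,1]$, strictly increasing on $[0,\gamma_k]$ and strictly decreasing on $[\gamma_k,1]$, with its unique maximum at $\gamma_k$. A one-line derivative computation gives $g'(\gamma)=(1-\gamma_k)^2/(1-\gamma)^2$ on $(C_\gamma,\gamma_k)$ and $g'(\gamma)=-\gamma_k^2/\gamma^2$ on $(\gamma_k,1-C_\gamma)$, so $|g'(\gamma)|\ge C_\gamma^2$ for every $\gamma\in[C_\gamma,1-C_\gamma]\setminus\{\gamma_k\}$, hence $g(\gamma_k)-g(\gamma)\ge C_\gamma^2\,|\gamma-\gamma_k|$ on that interval. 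Combined with the lower bound on $A_{t,k}$, this yields the population separation
\[
\Delta_I\bigl(R_{t,l}(\gamma_k;k),R_{t,r}(\gamma_k;k)\bigr)-\Delta_I\bigl(R_{t,l}(\gamma;k),R_{t,r}(\gamma;k)\bigr)\ \ge\ a_0\,C_\gamma^2\,|\gamma-\gamma_k|\qquad\text{for all }\gamma\in[C_\gamma,1-C_\gamma].
\]

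Then I would transfer this to finite samples. Because $k\in\mathring U(t)$, the rectangle $R_t$ carries no constraint on coordinate $k$, so for $\gamma\in[C_\gamma,1-C_\gamma]$ the split of $R_t$ at $\gamma$ on feature $k$ is balanced in the sense of Assumption~\ref{A:balancedsplit}, and therefore $R_{t,l}(\gamma;k),R_{t,r}(\gamma;k)\in\mathscr R$; thus Proposition~\ref{Prop:uniform_convergence}(b) applies and $\delta_n:=\sup_{R_{t,l},R_{t,r}\in\mathscr R}\bigl|\Delta_I^n-\Delta_I\bigr|\pto 0$. Since $\gamma_k\in[C_\gamma,1-C_\gamma]$ and $\gamma^*_{t,k}$ maximizes $\gamma\mapsto\Delta_I^n\bigl(R_{t,l}(\gamma;k),R_{t,r}(\gamma;k)\bigr)$ over that interval, the usual argmax sandwich gives, for every realization of the data,
\[
\Delta_I(\gamma^*_{t,k})\ \ge\ \Delta_I^n(\gamma^*_{t,k})-\delta_n\ \ge\ \Delta_I^n(\gamma_k)-\delta_n\ \ge\ \Delta_I(\gamma_k)-2\delta_n,
\]
so the population separation bound forces $a_0\,C_\gamma^2\,|\gamma^*_{t,k}-\gamma_k|\le 2\delta_n$, i.e.\ $|\gamma^*_{t,k}-\gamma_k|\le 2\delta_n/(a_0 C_\gamma^2)$. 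The right-hand side is a fixed multiple of $\delta_n$ and does not depend on $T$, on $t$, or on $k$, so taking the supremum over the whole index set and invoking $\delta_n\pto0$ completes the proof.

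The main obstacle, and the only place the explicit impurity formula of Lemma~\ref{Lemma:impurity_decrease} is genuinely used, is establishing the linear-in-$|\gamma-\gamma_k|$ separation of the population maximum: without it, uniform closeness of $\Delta_I^n$ to $\Delta_I$ would only deliver closeness of the optimal impurity \emph{values}, not of the maximizers. A secondary point that needs care is confirming that the candidate split regions remain inside $\mathscr R$ (balancedness), which is exactly why the argument is restricted to $k\in\mathring U(t)$ (an ``untouched'' coordinate of $R_t$) and to thresholds in the compact window $[C_\gamma,1-C_\gamma]$ that also contains $\gamma_k$. \qed
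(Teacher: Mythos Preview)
Your proof is correct and uses the same three ingredients as the paper's argument---the closed-form impurity from Lemma~\ref{Lemma:impurity_decrease}, the uniform lower bound on the prefactor from Lemma~\ref{Lemma:sufficient_overlap}, and the uniform convergence in Proposition~\ref{Prop:uniform_convergence}(b)---but organizes them differently. The paper works with the \emph{ratio} $a/b:=\Delta_I(\gamma^*_{t,k})/\Delta_I(\gamma_k)$: it first shows $a_n/a\pto1$ and $b_n/b\pto1$ (using that $b$ is bounded below), combines these with $a_n\ge b_n$ and $a\le b$ to force $a/b\pto1$, and then computes $1-a/b$ explicitly from Lemma~\ref{Lemma:impurity_decrease} to extract $1-a/b\ge|\gamma^*_{t,k}-\gamma_k|$. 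You instead establish a linear ``well-separated maximum'' inequality $g(\gamma_k)-g(\gamma)\ge C_\gamma^2|\gamma-\gamma_k|$ directly via a derivative bound and then run the standard argmax sandwich $\Delta_I(\gamma_k)-\Delta_I(\gamma^*_{t,k})\le 2\delta_n$. Your route is a bit more transparent and immediately yields an explicit, uniform bound $|\gamma^*_{t,k}-\gamma_k|\le 2\delta_n/(a_0C_\gamma^2)$; the paper's ratio trick avoids computing $g'$ but is slightly more roundabout. You are also more careful than the paper in explicitly verifying that the candidate split regions $R_{t,l}(\gamma;k),R_{t,r}(\gamma;k)$ lie in $\mathscr R$ (balancedness on the untouched coordinate), which the paper's proof uses without comment.
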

\begin{proof} 
To simplify the notation in the proof, let us denote
\begin{align*}
    a_n &= \Delta_I^n(R_{t,l}(\gamma^*_{t,k};k), R_{t,r}(\gamma^*_{t,k};k)),\\
    a &= \Delta_I(R_{t,l}(\gamma^*_{t,k};k), R_{t,r}(\gamma^*_{t,k};k))\\
    b_n &= \Delta_I^n(R_{t,l}(\gamma_{k};k), R_{t,r}(\gamma_{k};k)),\\
    b &= \Delta_I(R_{t,l}(\gamma_{k};k), R_{t,r}(\gamma_{k};k)).
\end{align*}
Using Proposition \ref{Prop:uniform_convergence}, we have 
\begin{align}
\sup_{T\in \mathcal T_0}\max_{\begin{array}{c} t\in T, \mu(R_t)\geq \epsilon,\\ \mathring U(t)\neq \emptyset\end{array}} \max_{k\in \mathring U(t)} \Big|a_n - a\Big|\pto 0.
\end{align}
By Lemma \ref{Lemma:5} (see \eqref{Eq:to_prove}), we know the second term is bounded uniformly above zero:
\[
\inf_{T\in \mathcal T_0}\min_{\begin{array}{c}t\in T, \mu(R_t)\geq \epsilon, \\ \mathring U(t)\neq \emptyset\end{array}} \min_{k\in \mathring U(t)} a \geq \frac{\epsilon}{2}C_\beta^2C_\gamma^{2\max_j s_j - 1}.
\]
Thus, the ratio converges to 1 in probability:
\begin{align}
\sup_{T\in \mathcal T_0}\max_{\begin{array}{c}t\in T, \mu(R_t)\geq \epsilon, \\ \mathring U(t)\neq \emptyset\end{array}} \max_{k\in \mathring U(t)} \left|\frac{a_n}{a} - 1\right|\pto 0.
\end{align}
Similarly, this  applies to $b_n$ and $b$, i.e.,
\begin{align}
\sup_{T\in \mathcal T_0}\max_{\begin{array}{c}t\in T, \mu(R_t)\geq \epsilon, \\ \mathring U(t)\neq \emptyset\end{array}} \max_{k\in \mathring U(t)} \left|\frac{b_n}{b} - 1\right|\pto 0.
\end{align}
 So by the continuous mapping theorem, we know that 
\[
\sup_{T\in \mathcal T_0}\max_{\begin{array}{c}t\in T, \mu(R_t)\geq \epsilon, \\ \mathring U(t)\neq \emptyset\end{array}} \max_{k\in \mathring U(t)}  \left| \frac{b_n}{a_n}\frac{a}{b} - 1\right| \pto 0.
\]
Because $\gamma^*_{t,k}$ maximizes $\Delta_I^n$ and $\gamma_{k}$ maximizes $\Delta_I$,  $a_n\geq b_n$ and $a \leq b$. Thus $\frac{b_n}{a_n}\frac{a}{b} \leq \frac{a}{b} \leq 1$. Therefore, we know that
\[
\sup_{T\in \mathcal T_0}\max_{\begin{array}{c}t\in T, \mu(R_t)\geq \epsilon, \\ \mathring U(t)\neq \emptyset\end{array}} \max_{k\in \mathring U(t)} 1 - \frac{a}{b} \pto 0.
\]
By Lemma \ref{Lemma:impurity_decrease}, we know that
\begin{align*}
a = &\mu(R_t)\cdot \beta_{j}^2P(\forall \; \ell\in S_j/\{k\},\; X_\ell \leq \gamma_{\ell}| X\in R_t)^2\cdot 
\Big(\1(\gamma^*_{t,k} \leq \gamma_{k})\cdot \frac{(1 - \gamma_{k})^2\gamma^*_{t,k}}{(1 -\gamma^*_{t,k})} + \1(\gamma^*_{t,k} > \gamma_{k}) \cdot \frac{\gamma_{k}^2(1-\gamma^*_{t,k})}{\gamma^*_{t,k}}
\Big),\\
    b = & \mu(R_t)\cdot \beta_{j}^2P(\forall \; \ell\in S_j/\{k\},\; X_\ell \leq \gamma_{\ell}| X\in R_t)^2\cdot 
 \gamma_{k}(1-\gamma_{k}).
\end{align*}
Thus the ratio is 
\begin{align*}
\frac{a}{b}& =\1(\gamma^*_{t,k} \leq \gamma_{k})\cdot \frac{(1 - \gamma_{k})\gamma^*_{t,k}}{\gamma_{k}(1 -\gamma^*_{t,k})} + \1(\gamma^*_{t,k} > \gamma_{k}) \cdot \frac{\gamma_{k}(1-\gamma^*_{t,k})}{(1-\gamma_{k})\gamma^*_{t,k}}.
\end{align*}
When $\gamma^*_{t,k}\leq \gamma_{k}$, 
\begin{align*}
    1 - \frac{a}{b} & = 1 - \frac{(1 - \gamma_{k})\gamma^*_{t,k}}{\gamma_{k}(1 -\gamma^*_{t,k})} \\
    & =  \frac{\gamma_{k} - \gamma^*_{t,k}}{\gamma_{k}(1 -\gamma^*_{t,k})} \geq \gamma_{k} - \gamma^*_{t,k}.
\end{align*}
Similarly, when $\gamma^*_{t,k}\geq \gamma_{k}$, then $1 - \frac{a}{b}\geq \gamma^*_{t,k} - \gamma_{k}$. 
Thus, $1 - a / b \geq |\gamma_{k} - \gamma^*_{t,k}|\geq 0$. Thus, by the Squeeze theorem, we complete the proof.
\end{proof}

\begin{lemma}\label{lemma:to_U} 
Suppose that 
constraint C4
%assumption \ref{A:asym_n_p} 
from the main text holds. Then the following statements are true: 
\begin{enumerate}
    \item[i)] For any fixed $\epsilon, \delta > 0$,
\begin{align*}
    P\Bigg(&\inf_{T\in \mathcal T_1(\mathcal D)}\min_{\begin{array}{c} t\in T, \mu(R_t)\geq \epsilon\\ U(t)\neq \emptyset\end{array}}\min_{j\in[J]}\min_{k\in S_j\cap U(t)}\\
    &P(\forall \; \ell\in S_j/\{k\},\; X_\ell \leq \gamma_{\ell}| X\in R_t; \mathcal D ) - C_\gamma^{s_j - 1}
    \geq -\delta\Bigg)\to 1.
\end{align*}
\item[ii)] For any fixed $\epsilon > 0$, 
\begin{align*}
    P\Bigg(&\inf_{T\in \mathcal T_1(\mathcal D)}\min_{\begin{array}{c} t\in T, \mu(R_t) \geq \epsilon,\\ U(t)\neq \emptyset\end{array}}\min_{k\in U(t)}\sup_{\gamma\in [C_\gamma, 1 - C_\gamma]}\\
    &\Delta_I^n(R_{t,l}(\gamma; k), R_{t,r}(\gamma; k)) > \frac{\epsilon}{4}C_\beta^2C_\gamma^{2\max_j s_j - 1}\Bigg) \rightarrow 1.
\end{align*}
\item[iii)]
\[\sup_{T\in \mathcal T_1(\mathcal D)}\max_{\begin{array}{c} t\in T, \mu(R_t)\geq \epsilon,\\ U(t)\neq \emptyset\end{array}} \max_{k\in U(t)} |\gamma^*_{t,k} - \gamma_{k}|\pto 0.
\]
\end{enumerate}

\end{lemma}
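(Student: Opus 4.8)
All three assertions are the $\mathcal T_1(\mathcal D)$/$U(t)$ counterparts of Lemma~\ref{Lemma:sufficient_overlap}, Lemma~\ref{Lemma:5}, and Lemma~\ref{lemma:threshold}, which were proved for $\mathcal T_0$/$\mathring U(t)$. Since $\mathring U(t)\subset U(t)$ and $\mathcal T_1(\mathcal D)\subset \mathcal T_0$, the only genuinely new issue is the gap between $U(t)$ and $\mathring U(t)$: a feature $k$ lies in $U(t)\setminus\mathring U(t)$ exactly when $k\in S_j$ for some $j$ with $S_j^+\cap \parent^\pm(t)=\emptyset$ while some other feature $\ell\in S_j$ was split once in the $\le$-direction and \emph{later re-split in the $>$-direction}, so that $R_t$ carries a nontrivial lower constraint on $X_\ell$. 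The plan is to show that at nodes of non-vanishing volume such re-splits are harmless, and then transfer the three statements essentially verbatim.

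\textbf{Step 1: the key volume/threshold estimate.} I would first prove that for every fixed $\epsilon>0$ there is a (random) sequence $\rho_n\pto 0$ such that, with probability $\to 1$, uniformly over $T\in \mathcal T_1(\mathcal D)$, over nodes $t$ with $\mu(R_t)\ge \epsilon$, and over interactions $j$ and features $\ell\in S_j$ on the path to $t$ with $S_j^+\cap\parent^\pm(t)=\emptyset$, one has $P(X_\ell\le \gamma_\ell\mid X\in R_t)\ge 1-\rho_n$. The argument goes by induction on the order in which the features of $S_j$ are split along the path. When $\ell$ is split for the first time (necessarily in the $\le$-direction, since $S_j^+\cap \parent^\pm(t)=\emptyset$) at an ancestor $t_1$, the inductive hypothesis ensures no earlier feature of $S_j$ has been re-split, so $\ell\in \mathring U(t_1)$; as $\mu(R_{t_1})\ge \mu(R_t)\ge \epsilon$, Lemma~\ref{lemma:threshold} (whose bound applies to the actual CART threshold, because Assumption~\ref{A:balancedsplit} forces the split fraction into $(C_\gamma,1-C_\gamma)$ over which $\gamma^*_{t,k}$ is maximized and $R_{t_1}$ has no constraint on $\ell$) gives that this first threshold equals $\gamma_\ell+O(\rho_n)$. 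Hence $R_t$ restricts $X_\ell$ to an interval $(c_{\mathrm{low}},c_{\mathrm{high}}]$ with $c_{\mathrm{high}}\le \gamma_\ell+\rho_n$, while $\mu(R_t)\ge \epsilon$ forces $c_{\mathrm{high}}-c_{\mathrm{low}}\ge \epsilon$; elementary interval arithmetic then yields $P(X_\ell\le\gamma_\ell\mid X\in R_t)\ge 1-\rho_n/\epsilon$ (the degenerate case $c_{\mathrm{low}}\ge\gamma_\ell$ cannot occur, since it would force $c_{\mathrm{high}}-c_{\mathrm{low}}\le\rho_n<\epsilon$). Uniformity of $\rho_n$ over the data-dependent family $\mathcal T_1(\mathcal D)$ and over all nodes comes directly from the uniform statements in Lemma~\ref{lemma:threshold} and Proposition~\ref{Prop:uniform_convergence}.

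\textbf{Step 2: deducing (i)--(iii).} For (i), I would factor $P(\forall\, \ell\in S_j/\{k\},\, X_\ell\le\gamma_\ell\mid X\in R_t)$ into a product over $\ell$ (features are independent and $R_t$ is a box), bound each on-path factor below by $1-\rho_n$ via Step~1 and each off-path factor below by $\gamma_\ell\ge C_\gamma$ as in Lemma~\ref{Lemma:sufficient_overlap}; since $s_j\le s=O(1)$ this gives $\ge C_\gamma^{s_j-1}(1-\rho_n)^{s_j-1}\ge C_\gamma^{s_j-1}-\delta$ with probability $\to 1$. Then (ii) follows by inserting this bound into the explicit formula of Lemma~\ref{Lemma:impurity_decrease} (already stated for $k\in S_j\cap U(t)$), evaluated at $\gamma=\gamma_k$, using $\gamma_k(1-\gamma_k)\ge \tfrac12 C_\gamma$ and $|\beta_j|\ge C_\beta$, together with $\sup|\Delta_I^n-\Delta_I|\pto 0$ from Proposition~\ref{Prop:uniform_convergence} — this is exactly the argument of Lemma~\ref{Lemma:5}. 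Finally (iii) follows by replaying the proof of Lemma~\ref{lemma:threshold} word for word, with (i) and (ii) substituted for Lemma~\ref{Lemma:sufficient_overlap} and Lemma~\ref{Lemma:5}.

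\textbf{Main obstacle.} The crux is Step~1. The subtlety is that whether $\ell$ belongs to $\mathring U$ or only to $U$ at the instant of its first split depends on whether \emph{earlier} features of the same interaction were re-split — the very event being bounded — so the induction must be organized along the order of signal-feature splits on each path, and the ``near-tie'' case where the first threshold marginally exceeds $\gamma_\ell$ must be handled carefully. A secondary care is keeping all error terms uniform over the data-dependent tree family $\mathcal T_1(\mathcal D)$ and over all nodes simultaneously, which is precisely why Lemma~\ref{lemma:threshold} and Proposition~\ref{Prop:uniform_convergence} were established in their strong uniform forms.
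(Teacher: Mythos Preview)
Your overall plan is exactly right and matches the paper's: the only content beyond Lemmas~\ref{Lemma:sufficient_overlap}--\ref{lemma:threshold} is controlling the features in $U(t)\setminus\mathring U(t)$, and this comes down to showing that whenever such a feature $\ell\in S_j$ has already been split and then re-split in the $+$ direction, the first split threshold was close to $\gamma_\ell$, so that $c_{\mathrm{high},\ell}(t)\le \gamma_\ell+o_p(1)$. Your interval arithmetic and your Step~2 deductions are fine.

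The gap is in how you organise the induction in Step~1. You claim that at the first split $t_1$ of $\ell$, ``the inductive hypothesis ensures no earlier feature of $S_j$ has been re-split, so $\ell\in \mathring U(t_1)$''. That claim is false and does not follow from any version of your hypothesis: nothing prevents some earlier feature $\ell'\in S_j$ from being split in the $-$ direction, then re-split in the $+$ direction, all \emph{before} $\ell$'s first split. In that scenario $(\ell',+1)\in\dot\parent^\pm(t_1)$, so $\ell\notin\mathring U(t_1)$, and Lemma~\ref{lemma:threshold} is unavailable at $t_1$. Inducting ``on the order in which features of $S_j$ are split'' does not decrease a quantity that rules this out.

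The fix the paper uses, and which you are one step away from, is to induct on the integer $L=\sum_{j}|\dot\parent^\pm(t)\cap S_j^+|$ and to prove (i), (ii), (iii) \emph{simultaneously} at each level. The base case $L=0$ is exactly $\mathring U$ and reduces to Lemmas~\ref{Lemma:sufficient_overlap}--\ref{lemma:threshold}. For the inductive step at level $L+1$: for each $(\ell,+1)\in S_j^+\cap\dot\parent^\pm(t)$, go to its first-split ancestor $t_{\mathrm{par},\ell}$. There $\ell\in U(t_{\mathrm{par},\ell})$ (since $\parent^\pm(t_{\mathrm{par},\ell})\subset\parent^\pm(t)$ and $S_j^+\cap\parent^\pm(t)=\emptyset$), and the $L$-count at $t_{\mathrm{par},\ell}$ is at most $L$ (the pair $(\ell,+1)$ is absent). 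Hence the inductive hypothesis (iii), not membership in $\mathring U$, gives $|\gamma^*_{t_{\mathrm{par},\ell},\ell}-\gamma_\ell|\pto 0$, from which $c_{\mathrm{high},\ell}(t)\le\gamma_\ell+o_p(1)$ and your interval computation yields (i) at level $L+1$. Then (ii) and (iii) at level $L+1$ follow exactly as in your Step~2. In short: do not try to land back in $\mathring U$; instead, make (iii) itself part of the induction hypothesis and induct on the number of positive re-splits in $\dot\parent^\pm$.
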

\begin{proof}
We use math induction to show that the above statements hold for any $L\geq 0$:
\begin{enumerate}
    \item[i)] For any fixed $\epsilon, \delta > 0$,
    \begin{align*}\small
       P\Bigg(&\inf_{T\in \mathcal T_1(\mathcal D)}\min_{\begin{array}{c} t\in T, \mu(R_t)\geq \epsilon, U(t)\neq \emptyset, \\\sum_{j=1}^J|\dot \parent^\pm(t) \cap S_j^+|\leq L\end{array}}\min_{j\in[J]}\min_{k\in S_j\cap U(t)} \\ 
       & P(\forall \; \ell\in S_j/\{k\},\; X_\ell \leq \gamma_{\ell}| X\in R_t; \mathcal D ) - C_\gamma^{s_j - 1} \geq -\delta\Bigg)\to 1. 
    \end{align*}
\item[ii)] For any fixed $\epsilon > 0$, 
\begin{align*}
    P\Bigg(&\inf_{T\in \mathcal T_1(\mathcal D)}\min_{\begin{array}{c} t\in T, \mu(R_t) \geq \epsilon, U(t)\neq \emptyset, \\\sum_{j=1}^J|\dot \parent^\pm(t) \cap S_j^+|\leq L\end{array}}\min_{k\in U(t)}\sup_{\gamma\in [C_\gamma, 1 - C_\gamma]}\\
    &\Delta_I^n(R_{t,l}(\gamma; k), R_{t,r}(\gamma; k)) > \frac{\epsilon}{4}C_\beta^2C_\gamma^{2\max_j s_j - 1}\Bigg) \rightarrow 1.
\end{align*}
\item[iii)]
\[\sup_{T\in \mathcal T_1(\mathcal D)}\max_{\begin{array}{c} t\in T, \mu(R_t)\geq \epsilon, U(t)\neq \emptyset, \\\sum_{j=1}^J|\dot \parent^\pm(t) \cap S_j^+|\leq L\end{array}} \max_{k\in U(t)} |\gamma^*_{t,k} - \gamma_{k}|\pto 0.
\]
\end{enumerate}
If those statements are true, then our proof is complete because for any node $t$, $\sum_{j=1}^J |\dot \parent^\pm(t) \cap S_j^+| \leq \sum_{j} s_j$, which is a constant.

When $L = 0$, $U(t)\neq \emptyset$ and $\sum_{j} |\dot \parent^\pm(t) \cap S_j^+| = 0$ implies that $U(t) = \cup_{j=1}^JS_j / F(t) = \mathring U(t)\neq \emptyset$. Then the statement holds because of Lemmas \ref{Lemma:sufficient_overlap}, \ref{Lemma:5}, and \ref{lemma:threshold}. 

Suppose the statement holds for $L=L_0$, and let us consider the case $L = L_0+1$:

i): For $k \in S_j\cap U(t)$, we know that $S_j^+\cap \parent^\pm(t) = \emptyset$.
Now consider $S_j^+\cap \dot \parent^\pm(t)$: if it is also empty, then $k\in \mathring U(t)$ and i) holds because of Lemma \ref{Lemma:sufficient_overlap}. 
Let's consider the case when $S_j^+\cap \dot \parent^\pm(t)\neq \emptyset$.
For any $\ell\in S_j^+\cap \dot \parent^\pm(t)$, some parent nodes of $t$ are split on feature $\ell$ and node $t$ is at the left branch of the first parent node that is split on $\ell$. 
In other words, this is the scenario where $(\ell, -1)$ first appears in the path and then $(\ell, +1)$ appears later. Denote that first parent node that is split on $\ell$ to be $t_{parent,\ell}$. 
Since none of $t_{parent,\ell}$'s parent nodes are split on $\ell$, $\ell\in S_j^+\cap \dot \parent^\pm(t)$ but not in $S_j^+\cap \dot \parent^\pm(t_{parent,\ell})$. 
Since $\dot \parent^\pm(t_{parent,\ell})$ is a subset of $\dot \parent^\pm(t)$, we know that $\sum_{j=1}^J |S_j^+\cap \dot \parent^\pm(t_{parent,\ell})| \leq L_0$.
Also, because $S_j^+\cap \dot \parent^\pm(t_{parent,\ell})=\emptyset$ and $\ell\not \in \dot F(t_{parent,\ell})$, we know that
$\ell\in U(t_{parent,\ell})$. 
Then by the induction condition iii), we know that $\gamma^*_{t_{parent,\ell}, \ell} \pto \gamma_\ell$. 
Because $t$ is at the left branch of $t_{parent, \ell}$, the upper bound in $R_t$ for feature $\ell$, i.e., $c_{high,\ell}$, is smaller or equal to $\gamma^*_{t_{parent,\ell},\ell}$. 
In other words, for any fixed $\delta > 0$, we know that
\[
P\left(\sup_{T\in \mathcal T_1(\mathcal D)}\max_{\begin{array}{c} t\in T, \mu(R_t)\geq \epsilon, U(t)\neq \emptyset, \\\sum_{j=1}^J|\dot \parent^\pm(t) \cap S_j^+|\leq L_0+1\end{array}} \max_{\begin{array}{c}j\in[J]\\S_j^+\cap \parent^\pm(t)=\emptyset\end{array}}\max_{(\ell,+1)\in S_j^+\cap \dot \parent^\pm(t)} c_{high,\ell} - \gamma_{\ell} > \delta\right)\pto 0.
\]
For any $l$ such that $\ell\in S_j$ but $(\ell,+1)\not\in S_j^+ \cap \dot \parent^\pm(t)$, we have that $c_{low, \ell} = 0$. 
Note that $c_{high,k} = 1$ and $c_{low,k} = 0$ because $k\in U(t)$. 
Then, $P(\forall \; \ell\in S_j/\{k\},\; X_\ell \leq \gamma_{\ell}| X\in R_t;\mathcal D)$ is
\begin{align}
    & \frac{P (\forall \ell \in S_{j}/\{k\}\; X_\ell\leq \gamma_{\ell}, X\in R_t; \mathcal D)}{\mu(R_t)} \nonumber\\
    = & \frac{\prod_{\ell\in [p]/S_{j}} (c_{high,\ell} - c_{low,\ell})\prod_{\ell\in S_{j}/\{k\}}\max(\min(c_{high,\ell}, \gamma_{\ell})-c_{low,\ell}, 0) }{\mu(R_t)}\nonumber\\
    = & \frac{\prod_{\ell\in [p]/S_{j}} (c_{high,\ell} - c_{low,\ell})\prod_{(\ell,+1)\in S_{j}^+\cap \dot \parent^\pm(t)}(c_{high,\ell}-c_{low,\ell} + o_p(1)) \prod_{\ell\in S_j/\{k\}, (\ell,+1)\not\in S_{j}^+\cap \dot \parent^\pm(t)}\min(c_{high,\ell},\gamma_\ell)}{\mu(R_t)}\nonumber\\
    \geq & \frac{\prod_{\ell\in [p]/S_{j}} (c_{high,\ell} - c_{low,\ell})\prod_{(\ell,+1)\in S_{j}^+\cap \dot \parent^\pm(t)}(c_{high,\ell}-c_{low,\ell}) \prod_{\ell\in S_j/\{k\}, (\ell,+1)\not\in S_{j}^+\cap \dot \parent^\pm(t)}c_{high,\ell}\cdot\gamma_\ell}{\mu(R_t)} + o_p(1)\nonumber\\
    \geq & \frac{\mu(R_t) \cdot \prod_{\ell\in S_j/\{k\}, (\ell,+1)\not\in S_{j}^+\cap \dot \parent^\pm(t)} \gamma_{\ell}}{\mu(R_t)} + o_p(1)\nonumber\\
    \geq & C_\gamma^{s_{j} - 1}  + o_p(1), \nonumber
\end{align}
where the first equality follows from \eqref{Eq:lower_bound_is_zero}.
That completes the proof for i).

ii): Given i), ii) follows analog as in the proof of Lemma \ref{Lemma:5}.

iii) Given ii), iii) follows analog as in the proof of Lemma \ref{lemma:threshold}.

Thus, we have finished the math induction and proved the statements.
\end{proof}

\begin{lemma}\label{lemma:noisy_feature}
For any tree $T\in \mathcal T_1$ and any node $t\in T$, the noisy features correspond to a nearly zero impurity decrease, i.e. 
\begin{align}
\sup_{T\in \mathcal T_1}\max_{t\in T}\max_{k\in [p]/U(t)}\sup_{\gamma\in [0, 1]} \Delta_I^n(R_{t,l}(\gamma;k), R_{t,r}(\gamma;k)) \pto 0.
\end{align}
\end{lemma}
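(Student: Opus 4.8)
The plan is to move everything to the population impurity decrease and then run a short case analysis driven by Lemma~\ref{lemma:to_U}(iii). First I would apply Proposition~\ref{Prop:uniform_convergence}(b), $\sup_{R_{t,l},R_{t,r}\in\mathscr R}\big|\Delta_I^n(R_{t,l},R_{t,r})-\Delta_I(R_{t,l},R_{t,r})\big|\pto 0$, so that it suffices to bound the \emph{population} quantity $\Delta_I\big(R_{t,l}(\gamma;k),R_{t,r}(\gamma;k)\big)$ uniformly over $T\in\mathcal T_1$, nodes $t$, noisy features $k\in[p]/U(t)$, and split points $\gamma$. Second, I would restrict to ``large'' nodes: Lemma~\ref{lemma:two_impurities} and $|Y|<1$ (constraint C2) give $\Delta_I^n\big(R_{t,l}(\gamma;k),R_{t,r}(\gamma;k)\big)\le N_n(t)/n$, and $N_n(t)/n=\mu(R_t)+o_p(1)$ uniformly over $\mathscr R$ by Proposition~\ref{Prop:uniform_convergence}(a), so for any target accuracy $\delta>0$ one fixes $\epsilon>0$ such that every node with $\mu(R_t)<\epsilon$ already has $\Delta_I^n<\delta$ with probability tending to one. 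It then remains to handle nodes with $\mu(R_t)\ge\epsilon$; for these the depth is bounded by a constant depending only on $\epsilon,C_\gamma$, and each coordinate interval of $R_t$ has length at least $\mu(R_t)\ge\epsilon$.

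Fix such a node $t$ and a noisy feature $k\notin U(t)$, and write $R_{t,l}=R_{t,l}(\gamma;k)$, $R_{t,r}=R_{t,r}(\gamma;k)$. If $k\notin\bigcup_j S_j$, then $\mathbb E(Y\mid X)$ does not involve $X_k$, so by coordinatewise independence (C1) $\mathbb E(Y\mid X\in R_{t,l})=\mathbb E(Y\mid X\in R_{t,r})$ and $\Delta_I=0$ exactly. Otherwise $k\in S_j$ for a unique $j$ by non-overlap (C3), and, using coordinatewise independence and C3 (exactly the cancellation of the $j'\ne j$ terms as in the proof of Lemma~\ref{Lemma:impurity_decrease}),
\[
\mathbb E(Y\mid X\in R_{t,l})-\mathbb E(Y\mid X\in R_{t,r})=\beta_j\, q\,\big(P(X_k\le\gamma_k\mid X\in R_{t,l})-P(X_k\le\gamma_k\mid X\in R_{t,r})\big),\qquad q:=P\big(\textstyle\forall\,\ell\in S_j/\{k\}:X_\ell\le\gamma_\ell\,\big|\,X\in R_t\big).
\]
Because $k\notin U(t)$, the feature that disqualifies $k$ from $U(t)$ is either (a) the earliest feature $\ell\in S_j/\{k\}$ whose first appearance on the path to $t$ is a split in the $+$ direction, or (b) $k$ itself (either its first appearance is a $+$-split and no $S_j$-feature first-appeared with a $+$ sign earlier, or $S_j^+\cap\parent^\pm(t)=\emptyset$ and $(k,-1)\in\parent^\pm(t)$). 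In every case, letting $t'$ be the first node on the path to $t$ that splits on this feature, a short bookkeeping of first-appearance signs (using C3) shows the feature is \emph{desirable} at $t'$; since $\mu(R_{t'})\ge\mu(R_t)\ge\epsilon$ and $U(t')\ne\emptyset$, Lemma~\ref{lemma:to_U}(iii) gives $\gamma^*_{t',\cdot}=(\text{the corresponding true threshold})+o_p(1)$, uniformly, which pins the relevant endpoint of that feature's $R_t$-interval to within $o_p(1)$ of its true threshold.

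From here two flavours of estimate finish the job. In case~(a), the left endpoint of the $\ell$-interval of $R_t$ is at least $\gamma_\ell-o_p(1)$, so (the interval having length at least $\epsilon$) $q\le P(X_\ell\le\gamma_\ell\mid X\in R_t)\le o_p(1)/\epsilon=o_p(1)$, and since $\mu(R_{t,l})\mu(R_{t,r})/\mu(R_t)\le\tfrac14$ and $\beta_j^2$ is bounded, $\Delta_I\le\tfrac14\beta_j^2 q^2=o_p(1)$. In case~(b) the pinned endpoint is that of the $k$-interval of $R_t$; the crude bound $|P(X_k\le\gamma_k\mid X\in R_{t,l})-P(X_k\le\gamma_k\mid X\in R_{t,r})|\le1$ is too weak, and instead one trades this difference against the small volume of the children. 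Letting $a_l,a_r$ be the lengths of the parts of the $k$-intervals of $R_{t,l},R_{t,r}$ lying on the ``wrong'' side of $\gamma_k$ (below $\gamma_k$ when $(k,+1)\in\parent^\pm(t)$, above $\gamma_k$ when $(k,-1)\in\parent^\pm(t)$), the pinning gives $a_l+a_r\le o_p(1)$, and a Cauchy--Schwarz estimate together with the fact that $\mu(R_t)$ is at most any single coordinate-interval length of $R_t$ yields
\[
\frac{\mu(R_{t,l})\mu(R_{t,r})}{\mu(R_t)}\big(P(X_k\le\gamma_k\mid X\in R_{t,l})-P(X_k\le\gamma_k\mid X\in R_{t,r})\big)^2\le 2\,(a_l+a_r)=o_p(1),
\]
so again $\Delta_I=o_p(1)$. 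Every error term above comes from the \emph{uniform} statements in Lemma~\ref{lemma:to_U}(iii) and Proposition~\ref{Prop:uniform_convergence}, so these bounds are uniform in $(T,t,k,\gamma)$; combining with the first paragraph completes the proof.

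The step I expect to be the main obstacle is case~(b), where $k$ itself carries the sign that makes it non-desirable: there the difference of conditional probabilities genuinely does not vanish, so $\Delta_I$ cannot be controlled through the mean gap alone, and one must instead exploit the $O(\mu(R_t))$ smallness of $\mu(R_{t,l})\mu(R_{t,r})/\mu(R_t)$ via the Cauchy--Schwarz step; one must also be careful --- this is where non-overlap (C3) and the tracking of first appearances of signed features enter --- to verify that the feature in question is desirable at the ancestor $t'$ where it was first split, so that Lemma~\ref{lemma:to_U}(iii) is applicable there.
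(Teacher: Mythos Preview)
Your proposal is correct and follows essentially the paper's approach: reduce to the population impurity via Proposition~\ref{Prop:uniform_convergence}(b), then case-split on why $k\notin U(t)$ and invoke Lemma~\ref{lemma:to_U}(iii) at the ancestor where the disqualifying feature first splits to pin the relevant threshold. The execution differs only in bookkeeping---the paper partitions by whether $S_j^+\cap\parent^\pm(t)\ne\emptyset$ (handling the earliest plus-signed feature being $k$ or $\ne k$ uniformly via the absolute-probability bound $\Delta_I\le 2C_\beta^2\,P(\forall\ell\in S_j,\,X_\ell\le\gamma_\ell,\,X\in R_t)$) rather than your split into cases~(a) and~(b), and in the $(k,-1)$ sub-case it uses a dichotomy on child volumes ($\min\{\mu(R_{t,l}),\mu(R_{t,r})\}>\epsilon$ versus $\le\epsilon$) in place of your Cauchy--Schwarz estimate; your upfront restriction to $\mu(R_t)\ge\epsilon$ via $\Delta_I^n\le N_n(t)/n$ is a tidy device the paper leaves implicit.
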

\begin{proof}
By Proposition \ref{Prop:uniform_convergence}, we only need to show that 
\begin{align}
\sup_{T\in \mathcal T_1}\max_{t\in T}\max_{k\in [p]/U(t)}\sup_{\gamma\in [0, 1]} \Delta_I(R_{t,l}(\gamma;k), R_{t,r}(\gamma;k)) \pto 0.
\end{align}
%Because $\frac{\mu(R_t\cap \{X|X_k\leq \gamma\})\mu(R_t\cap \{X|X_k > \gamma\})}{\mu(R_t)} < 1$, it suffices to show that 
%\begin{align}
%\max_{t\in T}\max_{k\in [p]/U(t)}\sup_{\tau\in [0, 1]}\max_{j'\in [J]} & \Big|P(\forall \; \ell\in S_{j'},\; X_\ell \leq \gamma_{\ell,j'}| X\in R_{t,l}(\gamma)) - P(\forall \; \ell\in S_{j'},\; X_\ell \leq \gamma_{\ell,j'}| X\in R_{t,r}(\gamma))\Big| \pto 0.\nonumber
%\end{align}
For $k\in [p]/U(t)$, either $k\in [p] / \bigcup_{j=1}^J S_j$ or $k\in \bigcup_{j=1}^J S_j / U(t)$. We will analyze these two cases separately:

\vspace{0.1cm}

First, assume that $k\in [p] / \bigcup_{j=1}^J S_j$. For any $j'\in [J]$, it follows that $k$ is not contained in $S_{j'}$. 
Because different features are independent, $X_k$ is independent from $X\in \{X|\forall \; \ell\in S_{j'},\; X_\ell \leq \gamma_{\ell}\}$. 
Therefore, for any $j'\in [J]$, we have 
\[P(\forall \; \ell\in S_{j'},\; X_\ell \leq \gamma_{\ell}| X\in R_{t,l}(\gamma;k)) = P(\forall \; \ell\in S_{j'},\; X_\ell \leq \gamma_{\ell}| X\in R_{t,r}(\gamma;k)).\]
That implies $\Delta_I(R_{t,l}(\gamma;k), R_{t,r}(\gamma;k)) = 0$. 

\vspace{0.1cm}

Second, assume that there exists $j$ such that $k\in S_j/U(t)$. 
For $j'\neq j$, by a similar deduction as before, we know that 
\[P(\forall \; \ell\in S_{j'},\; X_\ell \leq \gamma_{\ell}| X\in R_{t,l}(\gamma;k)) = P(\forall \; \ell\in S_{j'},\; X_\ell \leq \gamma_{\ell}| X\in R_{t,r}(\gamma;k)).\]
The impurity decrease $\Delta_I(R_{t,l}(\gamma;k), R_{t,r}(\gamma;k))$ becomes
\begin{align}\label{Eq:bridge}
\frac{\mu(R_{t,l}(\gamma;k))\mu(R_{t,r}(\gamma;k))}{\mu(R_t)} \beta_j^2\Big(P(\forall \; \ell\in S_{j},\; X_\ell \leq \gamma_{\ell}| X\in R_{t,l}(\gamma;k)) - P(\forall \; \ell\in S_{j},\; X_\ell \leq \gamma_{\ell}| X\in R_{t,r}(\gamma;k)) \Big)^2.
\end{align}
Again, we consider two cases: Because $k\not \in U(t)$, either $(k, -1)\in \parent^\pm(t)$ or $S_j^+\cap \parent^\pm(t) \neq \emptyset$. 

\vspace{0.1cm}

i) If $S_j^+\cap \parent^\pm(t)\neq \emptyset$, suppose $(k', +1)$ is the first positive signed feature in $S_j^+$ that enters $\parent^\pm(t)$. 
That means we can find a parent of $t$, denoted as $t_{parent}$, that splits on feature $k'$ and none of $t_{parent}$'s parent splits on $k'$. 
That implies $k'\not\in \parent(t_{parent})$ and $S_j^+\cap \parent^\pm(t_{parent}) = \emptyset$, in other words, $k'\in U(t_{parent})$. 
Recall that $\gamma_{t_{parent},k'}^*$ denotes the threshold at node $t_{parent}$. 
By Lemma \ref{lemma:to_U}, we know that the threshold $\gamma_{t_{parent},k'}^* \pto \gamma_{k'}$. 
Since $t$ is on the right branch of the node $t_{parent}$, we have that $c_{low, k'}(t)\geq \gamma_{t_{parent},k'}^*$. 
Thus, \[\mu(\{X|\forall \; \ell\in S_{j},\; X_\ell \leq \gamma_{\ell}\}\cap R_{t}) \pto 0.\] 
Since \eqref{Eq:bridge} is bounded by 
\begin{align*}
    2C_\beta^2 \frac{\mu(R_{t,l}(\gamma;k))\mu(R_{t,r}(\gamma;k))}{\mu(R_t)} \Big[ P(\forall \; \ell\in S_{j},\; X_\ell \leq \gamma_{\ell}| X\in R_{t,l}(\gamma;k)) + P(\forall \; \ell\in S_{j},\; X_\ell \leq \gamma_{\ell}| X\in R_{t,r}(\gamma;k))\Big]\\
    \leq 2C_\beta^2 P(\forall \; \ell\in S_{j},\; X_\ell \leq \gamma_{\ell}, X\in R_t),
\end{align*}
we know that \eqref{Eq:bridge} converges to zero in probability.

\vspace{0.1cm}

ii) If $S_j^+\cap \parent^\pm(t) = \emptyset$ but $(k, -1)\in \parent^\pm(t)$, it means there exists a parent of $t$, denoted $t_{parent}$, such that feature $k$ is used to split that node and none of its parents splits on $k$, in other words, $k\in U(t_{parent})$. 
By Lemma \ref{lemma:to_U}, we know that the corresponding threshold $\gamma_{t_{parent}, k}^* \pto \gamma_{k}$. 
Since $S_j^+\cap \parent^\pm(t)=\emptyset$, it follows that $t$ is on the left branch of $t_{parent}$. 
Thus, we have that $c_{high, k}(t)\leq \gamma_{t_{parent},k}^*$. 
For any fixed $\epsilon > 0$, if $\mu(R_{t,l}(\gamma;k)) > \epsilon$ and $\mu(R_{t,r}(\gamma;k)) > \epsilon$, then 
\[P(\forall \; \ell\in S_{j},\; X_\ell \leq \gamma_{\ell,j}| X\in R_{t,l}(\gamma;k)) - P(\forall \; \ell\in S_{j},\; X_\ell \leq \gamma_{\ell,j}| X\in R_{t,r}(\gamma;k)) \pto 0,\] 
which implies that $\Delta_I(R_{t,l}(\gamma;k), R_{t,r}(\gamma;k))\pto 0$.
Otherwise, $[\mu(R_{t,l}(\gamma;k)) \leq \epsilon$ or $\mu(R_{t,r}(\gamma;k)) \leq \epsilon$,
and thus, \[\frac{\mu(R_{t,l}(\gamma;k))\mu(R_{t,r}(\gamma;k))}{\mu(R_t)}\leq \epsilon\] and 
\[\Delta_I(R_{t,l}(\gamma;k), R_{t,r}(\gamma;k))\leq 4\epsilon.\] 
Since $\epsilon$ is chosen arbitrarily, this implies $\Delta_I(R_{t,l}(\gamma;k), R_{t,r}(\gamma;k))\pto 0$.

\vspace{0.1cm}

Combining a) and b), we complete the proof.

\end{proof}

With the help of the previous lemmas, we have the following proposition:
\begin{proposition} \label{prop:noisyCase}
Suppose $t_{\leaf}$ is a leaf of $\cP$ from a random tree $T\in \mathcal T_2$. Suppose that 
%assumptions \ref{A:asym_n_p}
constraint C4 and assumptions \ref{A:increasing_depth}-\ref{A:no_bootstrap} from the main text hold true. 
For any fixed constant $\epsilon > 0$,
the following holds true:
\begin{enumerate}
\item[i)]
\[
P\left(\max_{t\in T} \max_{k\in [p]/U(t)} \Delta_I^n (R_{t,l}(\gamma^*_{t,k};k), R_{t,r}(\gamma^*_{t,k};k)) < \frac{\epsilon}{4}C_\beta^2C_\gamma^{2\max_j s_j - 1}
\right) \to
1.
\]
\item[ii)] 
\[
P\left(\text{$U(t_{\leaf}) = \emptyset$}\Big| \cD\right) \pto 1.
\]
\item[iii)]
\[
P\left(\min_{t\in \cp(t_{\leaf})} \min_{k\in U(t)} \Delta_I^n (R_{t,l}(\gamma^*_{t,k};k), R_{t,r}( \gamma^*_{t,k};k)) \geq \frac{\epsilon}{4}C_\beta^2C_\gamma^{2\max_j s_j - 1} \Big| \cD
\right) \geq
1 - \epsilon^{\tilde{C}} - \eta_n(\cD, \epsilon),
\]
with constant $\tilde{C} = C_m^{2 s} / \log(1/C_\gamma)$ and $\eta_n(\cD, \epsilon)\pto 0$.
\end{enumerate}
\end{proposition}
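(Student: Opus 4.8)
The plan is to prove the three parts in turn; part i) is immediate, and ii) and iii) share the same mechanism. For i): Lemma~\ref{lemma:noisy_feature} gives $\sup_{T\in\mathcal T_1}\max_{t\in T}\max_{k\in[p]\setminus U(t)}\sup_{\gamma}\Delta_I^n(R_{t,l}(\gamma;k),R_{t,r}(\gamma;k))\pto 0$; since $\mathcal T_2\subseteq\mathcal T_1$, the empirical maximiser value $\Delta_I^n(R_{t,l}(\gamma^*_{t,k};k),R_{t,r}(\gamma^*_{t,k};k))$ is dominated by this supremum, while $\tfrac{\epsilon}{4}C_\beta^2 C_\gamma^{2\max_j s_j-1}$ is a fixed positive constant, so the probability of staying below it tends to $1$.

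For ii) and iii) I would first record a \emph{dichotomy at large nodes}, valid on a data event $G_n$ with $P(G_n)\to 1$ — the event on which Lemma~\ref{lemma:noisy_feature} and part ii) of Lemma~\ref{lemma:to_U} realise their high-probability conclusions and, via Assumption~\ref{A:increasing_depth}, every path is longer than $m(\epsilon):=\lfloor\log(1/\epsilon)/\log(1/C_\gamma)\rfloor$: at any node $t$ with $\mu(R_t)\geq\epsilon$ and $U(t)\neq\emptyset$, every feature of $U(t)$ strictly beats every feature of $[p]\setminus U(t)$ in optimal impurity decrease, so whenever the random candidate set $M_{\mathrm{try}}$ at $t$ meets $U(t)$ the chosen split is on some $k_t\in S_j\cap U(t)$. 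Such a split is \emph{desirable}: since $U(\cdot)$ is non-increasing along any root-to-leaf path and the split removes from $U$ either $k_t$ (if $\cP$ follows the $-1$ branch) or all of $S_j$ (if it follows the $+1$ branch), $|U|$ strictly decreases, so at most $s=|U(t_{\mathrm{root}})|$ desirable splits occur before $U$ is exhausted. Assumption~\ref{A:mtry} yields $P(M_{\mathrm{try}}\cap U(t)\neq\emptyset)\geq C_m$ uniformly in $t$, and Assumption~\ref{A:balancedsplit} (which forces $\mu(R_t)>C_\gamma^{D(t)}$) guarantees that the first $m(\epsilon)$ nodes of $\cP$ all have volume $\geq\epsilon$.

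Running this node by node along $\cP$ gives both claims. For ii): on $\cD\in G_n$ the number of desirable splits among the first $m(\epsilon)$ nodes stochastically dominates $\min\{s,\mathrm{Binomial}(m(\epsilon),C_m)\}$, hence $P_{(\cP,T)}(U(t_{\leaf})\neq\emptyset\mid\cD)\leq P(\mathrm{Binomial}(m(\epsilon),C_m)<s)$ (using that $U$ stays empty once empty); this tail tends to $0$ as $\epsilon\to 0$ with $m(\epsilon)\to\infty$, and since the conclusion of ii) does not involve $\epsilon$ we may take $\epsilon$ arbitrarily small, giving $P(U(t_{\leaf})=\emptyset\mid\cD)\pto 1$. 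For iii): on $G_n$ the event in iii) can fail only at a node with $\mu(R_t)<\epsilon$ and $U(t)\neq\emptyset$; letting $t^\circ$ be the first node of $\cP$ with $\mu(R_{t^\circ})<\epsilon$ (necessarily of depth $>m(\epsilon)$, since $\mu(R_{t^\circ})>C_\gamma^{D(t^\circ)}$), monotonicity of $U$ makes the failure probability at most $P_{(\cP,T)}(U(t^\circ)\neq\emptyset\mid\cD)\leq P(\mathrm{Binomial}(m(\epsilon),C_m)<s)\leq\binom{m(\epsilon)}{s-1}(1-C_m)^{m(\epsilon)-s+1}$. Since $(1-C_m)^{m(\epsilon)}=\epsilon^{\log(1/(1-C_m))/\log(1/C_\gamma)}$ with $\log(1/(1-C_m))>C_m>C_m^{2s}$, the poly-logarithmic-in-$1/\epsilon$ binomial coefficient can be absorbed into a small reduction of the exponent, leaving the stated $\epsilon^{\,C_m^{2s}/\log(1/C_\gamma)}$, and the leftover pieces ($\mathbf{1}(\cD\notin G_n)$ together with the absorbed slack and the residual deviation in the uniform-convergence lemmas) make up $\eta_n(\cD,\epsilon)\pto 0$.

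The hard part will be the last step's bookkeeping: locating the first small node $t^\circ$ and proving rigorously that its presence forces at least $m(\epsilon)$ large-volume splits earlier on $\cP$; converting the per-node probability $\geq C_m$ of a guaranteed desirable split into the exact exponent $C_m^{2s}/\log(1/C_\gamma)$ via a clean binomial tail bound; and — most delicately — keeping the conditioning on $\cD$ coherent, so that the uniform-convergence statements of Lemmas~\ref{lemma:noisy_feature} and~\ref{lemma:to_U} (which hold with high probability over $\cD$) interlock correctly with the tree/path probabilities that are taken conditionally on $\cD$.
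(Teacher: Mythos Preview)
Your sketch is correct and follows the same overall mechanism as the paper: part~i) is exactly Lemma~\ref{lemma:noisy_feature}, and for ii)--iii) both you and the paper combine Lemmas~\ref{lemma:to_U} and~\ref{lemma:noisy_feature} into a high-probability data event on which, at every large node with $U(t)\neq\emptyset$, any desirable feature beats any non-desirable one; then both count how many of the first $m(\epsilon)\approx\log(1/\epsilon)/\log(1/C_\gamma)$ nodes trigger a desirable split and bound the resulting binomial tail. The one substantive difference is the per-node trigger event. You use $\{M_{\mathrm{try}}(t)\cap U(t)\neq\emptyset\}$, which has probability $\geq m_{\mathrm{try}}/p\geq C_m$; the paper uses the cruder event $\{S\subset M_{\mathrm{try}}(t)\}$, with probability $\geq C_m^{s}$. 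The paper's choice has the advantage that the events are \emph{exactly} i.i.d.\ along the path (the set $S$ is fixed), so Hoeffding on $\mathrm{Binomial}(m(\epsilon),C_m^{s})$ applies directly and produces the stated exponent $\tilde C=C_m^{2s}/\log(1/C_\gamma)$ with no bookkeeping. Your choice gives a sharper per-node probability and hence, in principle, a better exponent $\sim C_m^{2}/\log(1/C_\gamma)$, but since $U(t)$ varies along the path you need the extra stochastic-domination/coupling step you allude to (conditionally on the history, each indicator is $\geq$ a fresh $\mathrm{Bernoulli}(C_m)$); you then give this gain back by weakening down to $\tilde C$. Either route works; the paper's is slightly cleaner to write, yours is slightly tighter. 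Your handling of iii) via the first small-volume node $t^\circ$ and monotonicity of $U$ matches the paper's argument exactly.
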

\begin{proof}
i) By Lemma \ref{lemma:noisy_feature}, we know with probability approaching 1,
\begin{align}\label{Eq:upper}
\max_{t\in T}\max_{k\in [p]/U(t)}\sup_{\gamma\in [0, 1]} \Delta_I^n(R_{t,l}(\gamma;k), R_{t,r}(\gamma;k)) \leq \frac{\epsilon}{4}C_\beta^2C_\gamma^{2\max_j s_j - 1}.
\end{align}

ii): 
For any fixed $\epsilon > 0$, by Lemma \ref{Lemma:5} and Lemma \ref{lemma:noisy_feature}, the following event $A_\epsilon$ happens with probability approaching 1,
\begin{align}\label{Eq:lower}
\begin{aligned}
    &A_\epsilon =\\
    &\bigcap_{T\in\mathcal T_1}\Bigg\{\min_{t\in T, \mu(R_t) \geq \epsilon,U(t)\neq \emptyset}\min_{k\in U(t)}\sup_{\gamma\in [C_\gamma, 1 - C_\gamma]} \Delta_I^n(R_{t,l}(\gamma;k)\}, R_{t,r}(\gamma;k))\\
    &> \max_{t\in T}\max_{k\in [p]/U(t)}\sup_{\gamma\in [0, 1]} \Delta_I^n(R_{t,l}(\gamma;k), R_{t,r}(\gamma;k))\Bigg\},
\end{aligned}
\end{align}
which implies that for any node with volume at least $\epsilon$ any desirable features has higher impurity decrease than any non-desirable feature.
For a random path $\cP$, denote its leaf node $t_{\leaf}$ and the depth of the path is $D$. Then for $d\in [D]$, denote $t_d$ to be the $d$-th node on the path $\cP(t_{\leaf})$. Recall that $S = \cup_{j = 1}^J S_j$ denotes the set of all signal features and $s = |S|$ their total number. 
Based on \eqref{Eq:lower}, if at any node $t$, its candidate feature set $\Mt(t)$ contains all the signal features $S$, then it will split on a signal feature as long as $U(t_{\leaf}) \neq \emptyset$. If there are more than $s$ nodes along the path that has volume larger than $\epsilon$ and their candidate feature set contains $S$, then the desirable features must have been exhausted at the leaf node, i.e., 
\begin{align}
\left\{ \Big|\{d \in [D] : S \subset \Mt(t_d) \text{ and } \mu(R_d) \geq \epsilon\}\Big| \geq s, A_\epsilon \right\} \subset \{ U(t_{\leaf}) = \emptyset, A_\epsilon\}.
\end{align}
Further, note that, because $\mu(R_{t_d}) \geq C_\gamma \mu(R_{t_{d-1}})\geq\ldots \geq C_\gamma^d$, when $d < \log \epsilon / \log C_\gamma$, it always holds that $\mu(R_{t_{d-1}})\geq \epsilon$ and therefore
\begin{align}\label{eq:Dequation}
\left\{\Big|\{d \in [\log \epsilon / \log C_\gamma] : S \subset \Mt(t_d) \}\Big| \geq s, A_\epsilon, D \geq \log \epsilon / \log C_\gamma \right\}\subset\left\{ U(t_{\leaf}) = \emptyset, A_\epsilon, D \geq \log \epsilon / \log C_\gamma\right\}.
\end{align}
Since for any node $t$, its candidate feature set $\Mt(t)$ has $\mt$ features, we know 
\begin{align*}
    P(S \subset \Mt(t)) = \frac{{p - s \choose \mt - s}}{ {p \choose \mt}} = \frac{\mt\cdot(\mt-1)\cdots(\mt-s+1)}{p\cdot(p-1)\cdots(p-s+1)}\geq \left(\frac{\mt - s + 1}{ p - s + 1} \right)^s \geq [C_m]^s.
\end{align*}
Since $\Mt(t)$ is independent of the path $\cP$, it follows that
\begin{align*}
&P_{(\cP, T)}\left( \Big|\{d \in [\log \epsilon / \log C_\gamma] : S \subset \Mt(t_d) \}\Big| \geq s \Big|D \geq \log \epsilon / \log C_\gamma, \cD\right)\\
\geq \; & P( B(\log \epsilon / \log C_\gamma, [C_m]^s) \geq s) - \1({\cD\in A_\epsilon})\\
\geq \; & 1 - \exp\left( - 2 \log \epsilon / \log C_\gamma \left( [C_m]^s - \frac{s}{\log \epsilon / \log C_\gamma} \right)^2 \right) - \1({\cD\in A_\epsilon})
\end{align*}
where $B(n,p)$ denotes a Binomial distribution with $n$ trails and success probability $p$ and the last inequaility follows from Hoeffding's inequality.
Thus, for any $0 < \epsilon < \exp( (1 - 1/\sqrt{2})[C_m]^s / (s \log (1 / C_\gamma)))$, we have 
\[
\left( [C_m]^s - \frac{s}{\log \epsilon / \log C_\gamma} \right)^2 \geq \frac{1}{2} C_m^{2s}.
\]
Denote
\[\tilde{C} = C_m^{2 s} / \log(1/C_\gamma),\]
we have that for sufficiently large $n$
\begin{align}
P_{(\cP, T)}\left( \Big|\{d \in [\log \epsilon / \log C_\gamma] : S \subset \Mt(t_d) \}\Big| \geq s \Big| D(\cP) \geq \log \epsilon / \log C_\gamma, \cD\right) \geq 1 - \epsilon^{\tilde{C}} - \1({\cD\in A_\epsilon})
\end{align}
and thus it follows from \eqref{eq:Dequation} that
\begin{align}
P_{(\cP, T)}\left(U(t_{\leaf}) = \emptyset \Big| D \geq \log \epsilon / \log C_\gamma, \cD \right) \geq 1 -  \epsilon^{\tilde C}- \1({\cD\in A_\epsilon}).
\end{align}
Because $P(D \geq \log \epsilon / \log C_\gamma) \to 1$, by the Markov inequality, we know the random variable \[P(D \geq \log \epsilon / \log C_\gamma\Big| \cD) \pto 1.\] Thus, we know 
\begin{align}
P_{(\cP, T)}\left(U(t_{\leaf}) = \emptyset \Big| \cD \right) \geq 1 -  \epsilon^{\tilde C} + \eta_n(\cD, \epsilon),
\end{align}
where $\eta_n(\cD, \epsilon)$ is a random variable only depend on $\cD$ and $\eta_n(\cD, \epsilon)\pto 0$. Because that holds for any $\epsilon$, we have \[P_{(\cP, T)}\left(U(t_{\leaf}) = \emptyset \Big| \cD \right)\pto 1.\]

iii) Denote $t_s$ to be the s-th node in a path $\cP(t_{\leaf})$ for $s\geq 1$. 
Based on the proof of ii), let $d$ be an integer that (roughly) equals to $\frac{\log \epsilon}{\log C_\gamma}$. 
Then $\mu(R_{t_d})\geq \epsilon$ and $P(U(t_{d})= \emptyset\Big| \cD) \geq 1 - \epsilon^{\tilde{C}} + \eta_n(\cD , \epsilon)$. 
When $U(t_d) = \emptyset$, it follows that $U(t_s) \neq \emptyset$ implies $s \leq d$ and $\mu(R_{t_s}) \geq \epsilon$. 
Thus, 
\[P(\exists t\in \cP(t_{\leaf}),\text{ such that }U(t)\neq \emptyset \text{ and } \mu(R_t) < \epsilon|\cD) \leq P(U(t_d) \neq \emptyset|\cD) \leq \epsilon^{\tilde{C}} - \eta_n(\cD, \epsilon).\]
Therefore, we have
\begin{align}
&P\left(\min_{t\in \cp(t_{\leaf}), U(t)\neq \emptyset} \min_{k\in U(t)} \Delta_I^n (R_{t,l}(\gamma^*_{t,k};k), R_{t,r}(\gamma^*_{t,k};k)) \geq \frac{\epsilon}{4}C_\beta^2C_\gamma^{2\max_j s_j - 1}
\Big| \cD \right)\nonumber \\
\geq & P\left(\min_{t\in \cp(t_{\leaf}), \mu(R_t)\geq \epsilon, U(t)\neq \emptyset} \min_{k\in U(t)} \Delta_I^n (R_{t,l}(\gamma^*_{t,k};k), R_{t,r}(\gamma^*_{t,k};k)) \geq \frac{\epsilon}{4}C_\beta^2C_\gamma^{2\max_j s_j - 1} \Big| \cD
\right)
 -\epsilon^{\tilde{C}} - \eta_n(\cD, \epsilon),
\end{align}
thus, the proof follows from Lemma \ref{lemma:to_U}.
\end{proof}

\subsubsection{Balanced root feature selection }
Recall the definition of $C_{\mathrm{root}}(\cD)$ in \eqref{eq:invFeat}, which appears in Theorem \ref{theo:SappearNew}.
Recall that for any tree $T$ from RF, there are two different sources of randomness:
first, the randomness of the data $\cD = ((\bx_i, y_i))_{i = 1}^n$, which we denoted as $(\cD)$, and second, the randomness from the candidate feature selection, which we denoted as $(T)$. 
Denote $\Mt(t) \subset [p]$ to be the set of candidate features selected at node $t$ and note that $\Mt(t)$ and the data $\cD$ are independent.

Define the event $A$ to be that, given data $\cD$, the maximum impurity decrease at the split of root node for every signal feature $k \in \cup_j S_j$ is larger than that of any noisy feature $k^\prime \not\in \cup_jS_j$, that is, 
\begin{align}\label{eq:eventA}
    A = \big\{ 
    \min_{k \in \cup_j S_j} \Delta_I^n \left( R_{t_{\mathrm{root}},l}(\gamma^\star_k, k), R_{t_{\mathrm{root}},r}(\gamma^\star_k,k) \right) > \max_{k^\prime \not\in \cup_j S_j} \Delta_I^n\left( R_{t_{\mathrm{root}},l}(\gamma^\star_{k^\prime}, k^\prime), R_{t_{\mathrm{root}},r}(\gamma^\star_{k^\prime},k^\prime) \right) 
    \big\}.
\end{align}
Note that the event only depends on the data randomness $\cD$ (and not on the tree randomness $T$ and the path randomness $\cP$).
Thus, $A$ is independent of $\Mt(t_{\mathrm{root}})$.
Note that it follows from Proposition \ref{prop:noisyCase} that 
\begin{align*}
    P_{\cD}(A) \to 1 \quad \text{ as } n \to \infty.
\end{align*}

\begin{theorem}\label{theo:balancedRootFeature}
Assume that $C_m p \leq \mt \leq (1 - C_m) (p - s + 1) + 1$ for some constant $C_m \in (0,1)$. 
Condition on $\cD = ((\bx_i, y_i))_{i = 1}^n$, for any $k\in \cup_j S_j$, we have that
    \begin{align*}
        P_T\left(t_{\mathrm{root}} \text{ splits on feature }k \middle|\; \cD \right) \geq C_m^{s} - 1_{A^c}
    \end{align*}
and thus, 
\begin{align*}
C_{\mathrm{root}}(\cD) \geq  C_m^{s}  - 1_{A^c}  \pto [C_m]^s \quad \text{ as } n \to \infty.    
\end{align*}
\end{theorem}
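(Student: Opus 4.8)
The plan is to isolate a combinatorial event about the random candidate set $\Mt(t_{\mathrm{root}})$ that, together with the data-measurable event $A$ from \eqref{eq:eventA}, forces the root to split on a prescribed signal feature. Write $S := \cup_{j=1}^J S_j$. First I would note that on $A$ every signal feature has a strictly larger optimal-threshold impurity decrease at the root than every noisy feature. Hence, for a fixed $k\in S$, on the event $\{k\in\Mt(t_{\mathrm{root}})\}\cap\{\Mt(t_{\mathrm{root}})\cap S=\{k\}\}$ the feature $k$ is the \emph{unique} candidate with maximal impurity decrease — it strictly beats all noisy candidates by $A$, and there are no competing signal candidates — so the root splits on $k$. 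This yields the inclusion
\[
\{k\in\Mt(t_{\mathrm{root}})\}\cap\{\Mt(t_{\mathrm{root}})\cap S=\{k\}\}\cap A\ \subset\ \{t_{\mathrm{root}}\text{ splits on }k\}.
\]

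Next I would use that $\Mt(t_{\mathrm{root}})$ is independent of $\cD$, so conditioning on $\cD$ makes $1_A$ a constant and leaves the subsampling probability unconditional:
\[
P_T\big(t_{\mathrm{root}}\text{ splits on }k\mid\cD\big)\ \geq\ 1_A\cdot P_T\big(k\in\Mt(t_{\mathrm{root}}),\ \Mt(t_{\mathrm{root}})\cap S=\{k\}\big).
\]
The remaining task is to bound the last probability from below by $C_m^s$. Choosing $\mt$ features uniformly from $[p]$, the chance of picking $k$ while avoiding the other $s-1$ signal features equals $\binom{p-s}{\mt-1}/\binom{p}{\mt}$, which I would rewrite as
\[
\frac{\mt}{p}\ \prod_{i=1}^{s-1}\frac{p-\mt-i+1}{p-i}.
\]
Now $\mt/p\geq C_m$ by the hypothesis $\mt\geq C_m p$, and each factor satisfies $\frac{p-\mt-i+1}{p-i}\geq C_m \iff \mt\leq(1-C_m)(p-i)+1$, whose most restrictive instance $i=s-1$ is precisely the hypothesis $\mt\leq(1-C_m)(p-s+1)+1$; so the whole product is at least $C_m^s$. (In the regime of interest one also has $\mt-1\leq p-s$, so the binomial coefficients are nondegenerate; this follows from the same upper bound once $C_m(p-s+1)\geq1$.)

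Combining the two displays gives $P_T(t_{\mathrm{root}}\text{ splits on }k\mid\cD)\geq 1_A\,C_m^s\geq C_m^s-1_{A^c}$, the last step being immediate on both $A$ and $A^c$ since $C_m^s<1$. Taking the minimum over $k\in S$ in the definition \eqref{eq:invFeat} gives $C_{\mathrm{root}}(\cD)\geq C_m^s-1_{A^c}$, and since $P_\cD(A)\to1$ by Proposition \ref{prop:noisyCase}, $1_{A^c}\pto0$ and therefore $C_m^s-1_{A^c}\pto C_m^s$, which is the asserted conclusion. The only step that is not pure bookkeeping is the combinatorial estimate, and its content is exactly that the admissible window $C_mp\le\mt\le(1-C_m)(p-s+1)+1$ is calibrated so that the probability of selecting a given signal feature together with no other signal feature never drops below $C_m^s$; the dominance argument in the first step is a one-line consequence of the definition of $A$, and the independence manipulation is routine once one recalls $\Mt(t_{\mathrm{root}})\indep\cD$.
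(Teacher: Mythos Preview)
Your proof is correct and follows essentially the same approach as the paper: define the event $B_k=\{\Mt(t_{\mathrm{root}})\cap S=\{k\}\}$, use $A\cap B_k\subset\{t_{\mathrm{root}}\text{ splits on }k\}$ together with $\Mt(t_{\mathrm{root}})\indep\cD$ and the $\cD$-measurability of $A$, and then lower-bound $P(B_k)=\binom{p-s}{\mt-1}/\binom{p}{\mt}$ by $C_m^s$ via the same factor-by-factor estimate. The only cosmetic difference is that you factor out $1_A$ directly (obtaining $1_A\,P(B_k)\ge C_m^s-1_{A^c}$), whereas the paper writes $P_T(B_k\cap A\mid\cD)\ge P(B_k)-1_{A^c}$; both are trivially equivalent.
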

\begin{proof}
For any $k\in \cup_j S_j$, define $B_k$ to be the event that only signal feature $k$ is selected in $\Mt(t_{\mathrm{root}})$, that is,
\begin{align*}
        B_k \triangleq \{ \Mt(t_{\mathrm{root}}) \cap \; \cup_j S_j = k \text{ and } \abs{\Mt(t_{\mathrm{root}}) \setminus \cup_j S_j} = \mt - 1\}.
\end{align*}
Note that $B_k$ only depends on $\Mt(t_{\mathrm{root}})$ but not on $\cD$ and 
\begin{align*}
A \cap B_k \subset  \{ t_{\mathrm{root}} \text{ splits on feature }k \}.
\end{align*}
Thus, 
\begin{align*}
     P_T\left(t_{\mathrm{root}} \text{ splits on feature }k \middle| \cD \right) \geq P_T(B_k\cap A|\cD) \geq P_T\left( B_k \middle| \cD \right) - P_T\left( A^c \middle| \cD \right) = P\left( B_k \right) - 1_{A^c}.
\end{align*}
Moreover, we have that
\begin{align*}
    P(B_k) &= \frac{{p-s\choose \mt - 1}}{{p\choose \mt}} = \frac{\mt}{p} \frac{{p-s\choose \mt - 1}}{{p - 1\choose \mt -1}} = 
    \frac{\mt}{p} \frac{{p-\mt\choose s - 1}}{{p - 1\choose s - 1}}\\
    &=\prod_{i = 0}^{s-2}\left(\frac{p-\mt - i}{p - 1 - i}\right) \frac{\mt}{p} \geq \left( \frac{p - \mt - s + 2}{p - s + 1}\right)^{s-1} \frac{\mt}{p}\geq C_m^{s},
\end{align*}
where the second equality follows from the identity 
\begin{align*}
    \frac{ {n - h \choose k} }{ { n \choose k} } =  \frac{ {n - k \choose h} }{ { n \choose h} },
\end{align*}
with where $n = p - 1$, $h = s - 1$, and $k = \mt - 1$.
\end{proof}

\subsubsection{Combining results}
% %
Our major result in Theorem \ref{theo:SappearNew} is formulated for the random (oracle) feature set $\cF = \cF(\cD, T, \cP)$. 
Note that this is an oracle feature set, as it depends on the true interactions $S_j$, which are not known in practice.
From the analysis in Section \ref{subsubsec:filtering} we know that we can obtain a consistent estimate of the oracle feature set $\cF$ by thresholding on MDI as in $\hat{\cF}_\epsilon$.
Recall that for a given $\epsilon$ the (random) set $\hat{\cF}_\epsilon$ can easily be obtained without any knowledge of the true model.
Based on Proposition \ref{prop:noisyCase}, we observe the following.

Recall that $\Omega_0$ is defined in \eqref{eq:Omega0}, $\cF$ is defined in \eqref{eq:defPF}, and $\hat \cF_\epsilon$ is defined in \eqref{eq:define_hat_cf} in the main text. We have the following theorem.
\begin{theorem}\label{theo:hatFequalF}
Under the assumption of Proposition \ref{prop:noisyCase} it holds true that for any fixed $\epsilon > 0$,
\begin{align}
&P_{(\cP, T)}\left( \Omega_0^c \;\middle|\; \cD \right)  \pto 0;\label{eq:omega}\\
& P_{(\cP, T)}\left( \hat{\cF}_\epsilon \nsubseteq {\cF} \;\middle|\; \cD \right) \pto 0;\label{eq:not_contain}\\
&P_{(\cP, T)}\left( \hat{\cF}_\epsilon \neq {\cF} \;\middle|\; \cD \right) \leq \left(\frac{ 4 \epsilon}{C_\beta^2 C_\gamma^{2\max_js_j -1}}\right)^{\tilde{C}} + \eta_n(\cD, \epsilon) \quad \text{ with }  \eta_n(\cD, \epsilon)  \pto 0;\label{eq:not_equal}
\end{align}
with $\tilde{C}$ as in Proposition \ref{prop:noisyCase}.
\end{theorem}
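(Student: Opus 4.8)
The plan is to deduce all three statements from Proposition~\ref{prop:noisyCase} and Lemma~\ref{lemma:noisy_feature}, the guiding idea being that on a single high-probability event the observed set $\hat\cF_\epsilon$ coincides with the oracle set $\cF$ for every realized tree and path. Statement \eqref{eq:omega} requires no work: since $\Omega_0 = \{U(t_{\leaf})=\emptyset\}$ by \eqref{eq:Omega0}, \eqref{eq:omega} is exactly part ii) of Proposition~\ref{prop:noisyCase}.

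For \eqref{eq:not_contain} I would introduce the data event $G_1 = G_1(\cD)$ that every feature $k\notin U(t)$ at every node $t$ of every tree in $\mathcal T_1$ has $\sup_{\gamma\in[0,1]}\Delta_I^n(R_{t,l}(\gamma;k),R_{t,r}(\gamma;k)) < \epsilon$; by Lemma~\ref{lemma:noisy_feature} (equivalently part i) of Proposition~\ref{prop:noisyCase}), $P(G_1^c)\to 0$. On $G_1$, for the random tree $T\in\mathcal T_2\subseteq\mathcal T_1$ and any of its paths, any $(k,b)\in\hat\cF_\epsilon$ is desirable: at the node $t$ where $(k,b)$ enters $\hat\cF_\epsilon$ one has $k_t=k$ and $\Delta_I^n(t)>\epsilon$, which on $G_1$ forces $k\in U(t)$; since $k\in U(t)$ implies $k\notin\parent(t)$, the node $t$ is the first node of the path splitting on $k$, hence $(k,b_t)=(k,b)\in\cF$ by \eqref{eq:defPF}. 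Thus $\hat\cF_\epsilon\subseteq\cF$ whenever $\cD\in G_1$, and \eqref{eq:not_contain} follows from $P_{(\cP,T)}(\hat\cF_\epsilon\nsubseteq\cF\mid\cD)\le\1(\cD\notin G_1)\pto 0$.

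For \eqref{eq:not_equal} I would additionally control the reverse inclusion using part iii) of Proposition~\ref{prop:noisyCase}. Setting $\tilde\epsilon := 4\epsilon/(C_\beta^2 C_\gamma^{2\max_j s_j-1})$, so that $\tfrac{\tilde\epsilon}{4}C_\beta^2C_\gamma^{2\max_j s_j-1}=\epsilon$, and applying part iii) with parameter $\tilde\epsilon$ (we may assume $\tilde\epsilon$ lies in the admissible range of that proposition, as otherwise $\tilde\epsilon>1$ and the claimed bound already exceeds $1$), the event $E_1 := \{\min_{t\in\cp(t_{\leaf})}\min_{k\in U(t)}\Delta_I^n(R_{t,l}(\gamma^*_{t,k};k),R_{t,r}(\gamma^*_{t,k};k))\ge\epsilon\}$ satisfies $P_{(\cP,T)}(E_1^c\mid\cD)\le\tilde\epsilon^{\tilde C}+\eta_n(\cD,\tilde\epsilon)$ with $\eta_n(\cD,\tilde\epsilon)\pto 0$. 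On $E_1\cap G_1$ one has $\hat\cF_\epsilon=\cF$: the inclusion $\hat\cF_\epsilon\subseteq\cF$ is the argument above, and conversely if $(k,b)\in\cF$ with first-split node $t$, then $k\in U(t)$ and, using Assumption~\ref{A:balancedsplit} to see the split of $t$ is on feature $k$ at a balanced (hence $(C_\gamma,1-C_\gamma)$-valued) optimal threshold, $\Delta_I^n(t)\ge\Delta_I^n(R_{t,l}(\gamma^*_{t,k};k),R_{t,r}(\gamma^*_{t,k};k))\ge\epsilon$ on $E_1$, so $(k,b)\in\hat\cF_\epsilon$. Therefore
\[
P_{(\cP,T)}\!\left(\hat\cF_\epsilon\ne\cF\mid\cD\right)\le P_{(\cP,T)}(E_1^c\mid\cD)+\1(\cD\notin G_1)\le\left(\tfrac{4\epsilon}{C_\beta^2C_\gamma^{2\max_j s_j-1}}\right)^{\tilde C}+\eta_n(\cD,\epsilon),
\]
with $\eta_n(\cD,\epsilon):=\eta_n(\cD,\tilde\epsilon)+\1(\cD\notin G_1)\pto 0$.

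The trickiest step will be reconciling the two ``first-occurrence'' conventions: $\hat\cF_\epsilon$ records the sign of a feature at the first node where it appears \emph{with impurity decrease above $\epsilon$}, whereas $\cF$ records the sign at the first node where it appears \emph{at all} and only then tests membership in $U(t)$. These agree because $k\in U(t)\Rightarrow k\notin\parent(t)$, so on $G_1$ every above-threshold occurrence of a feature is automatically its first occurrence, which rules out any sign flip between an earlier sub-threshold occurrence and a later above-threshold one. A minor, purely cosmetic point is the strict inequality $\Delta_I^n(t)>\epsilon$ in the definition of $\hat\cF_\epsilon$ versus the weak inequality produced by part iii); this is handled by running the argument with $\tilde\epsilon(1+\delta)$ in place of $\tilde\epsilon$ and letting $\delta\downarrow 0$, the discrepancy being absorbed into $\eta_n(\cD,\epsilon)$.
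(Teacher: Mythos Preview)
Your proposal is correct and follows essentially the same approach as the paper: deduce \eqref{eq:omega} from Proposition~\ref{prop:noisyCase}~ii), deduce \eqref{eq:not_contain} from the noisy-feature bound (Lemma~\ref{lemma:noisy_feature} / Proposition~\ref{prop:noisyCase}~i)) via a data-only event on which $\hat\cF_\epsilon\subseteq\cF$, and deduce \eqref{eq:not_equal} by combining this with Proposition~\ref{prop:noisyCase}~iii) applied at $\tilde\epsilon = 4\epsilon/(C_\beta^2 C_\gamma^{2\max_j s_j-1})$. Your version is in fact a bit more careful than the paper's on two points the paper glosses over (the reconciliation of the two first-occurrence conventions and the strict-versus-weak inequality), but the logical skeleton is the same.
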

\begin{proof}
\eqref{eq:omega} follows directly from Proposition \ref{prop:noisyCase} ii) and the definition of $\Omega_0$ in \eqref{eq:Omega0}.

To prove \eqref{eq:not_contain}, one observes from Proposition \ref{prop:noisyCase} i) that for any $\epsilon > 0$, taking $\tilde \epsilon = \frac{ 4 \epsilon}{C_\beta^2 C_\gamma^{2\max_js_j -1}}$, the following happens with probability converging to one (as $n \to \infty$) 
\[
\max_{t\in T} \max_{k\in [p]/U(t)} \Delta_I^n (R_{t,l}(\gamma^*_{t,k};k), R_{t,r}(\gamma^*_{t,k};k)) < \frac{\tilde \epsilon}{4}C_\beta^2C_\gamma^{2\max_j s_j - 1} = \epsilon,
\]
which implies that $\hat \cF_\epsilon$ contains no irrelevant features. Thus,
\[
\liminf_{n \to \infty} P_{(\cD, T, \cP)} \left( \hat{\cF}_{\epsilon} \subseteq  \cF \right) = 1.
\]
Then by Markov inequality, we know $P_{(\cP, T)}\left( \hat{\cF}_\epsilon \nsubseteq {\cF} \;\middle|\; \cD \right)\pto 0$.

To prove \eqref{eq:not_equal}, we further note that by Proposition \ref{prop:noisyCase} iii),
\[
P\left(\min_{t\in \cP(t_{\leaf})} \min_{k\in U(t)} \Delta_I^n (R_{t,l}(\gamma^*_{t,k};k), R_{t,r}( \gamma^*_{t,k};k)) \geq \epsilon \Big| \cD
\right) \geq 
1 - \left(\frac{4\epsilon}{C_\beta^2C_\gamma^{2\max_js_j-1}}\right)^{\tilde{C}} - \eta_n(\cD, \epsilon).
\]
If \[\min_{t\in \cp(t_{\leaf})} \min_{k\in U(t)} \Delta_I^n (R_{t,l}(\gamma^*_{t,k};k), R_{t,r}( \gamma^*_{t,k};k)) \geq \epsilon\] and \[\max_{t\in T} \max_{k\in [p]/U(t)} \Delta_I^n (R_{t,l}(\gamma^*_{t,k};k), R_{t,r}(\gamma^*_{t,k};k)) < \epsilon,\] we know $\hat \cF_\epsilon = \cF$. Thus, we have
\begin{align}
    P_{(T, \cP)} \left( \hat{\cF}_{\epsilon} =  \cF \Big| \cD \right) &\geq 1 -  \left(\frac{ 4 \epsilon}{C_\beta^2 C_\gamma^{2\max_js_j -1}}\right)^{\tilde{C}} - \eta_n(\cD, \epsilon).
\end{align}
That completes the proof.
\end{proof}

Finally, we can combine Theorem \ref{theo:SappearNew}, Theorem \ref{theo:balancedRootFeature}, and Theorem \ref{theo:hatFequalF} to prove Theorem \ref{theo:mainResult_general_upper_bound} and \ref{theo:mainResult} in the main text.

\begin{proof}[Proof of Theorem \ref{theo:mainResult_general_upper_bound}]
Assume that $|S^\pm| = \tilde{s}$ and $S^\pm = \{(k_1, b_1), \ldots, (k_{ \tilde{s} }, b_{ \tilde{s}} ) \}$.
Analog as in Theorem \ref{theo:SappearNew}, for any feature $k \in [p]$, let $B^k$ be the Bernoulli random variable we draw when $k$ appears for the first time on $\cP$.
Recall the definition of $\hat{\cF}_\epsilon$, in particular, that $(k, b_k) \in \hat{\cF}_\epsilon$ only if $X_k$ appears the first time on $\cP$.
Thus, analog as for $\cF$ (recall the proof of Theorem \ref{theo:SappearNew}) we have that $(k, -1) \in \cF$ implies $B^k = -1$ and $(k, +1) \in \cF$ implies $B^k = +1$.
Thus,
\begin{align*}
    \{S^\pm \in \hat{\cF}_\epsilon \} \subset \{ B^{k_1} = b_1 \cap \ldots \cap  B^{k_{\tilde{s}}} = b_{\tilde{s}} \}
\end{align*}
and hence,
\begin{align*}
    \text{DWP}(S^\pm) = P_{(\cP, T)}(S^\pm \in \hat{\cF}_\epsilon | \cD ) \leq 
    P_{(\cP, T)}(B^{k_1} = b_1 \cap \ldots \cap B^{k_{\tilde{s}}} = b_{\tilde{s}} | \cD )
    = P_\cP( B^{k_1} = b_1 \cap \ldots B^{k_{\tilde{s}}} = b_{\tilde{s}} ) = 2^{- \tilde{s}}.
\end{align*}
\end{proof}

\begin{proof}[Proof of Theorem \ref{theo:mainResult}]
Assume that $|S^\pm| = \tilde{s}$ and $S^\pm = \{(k_1, b_1), \ldots, (k_{ \tilde{s} }, b_{ \tilde{s}} ) \}$ and let
\begin{align*}
    r_n(\cD, \epsilon) = \max\left( P_{(\cP,T)}(\Omega_0^c | \cD)  + \eta_n(\cD, \epsilon),\; 
    P_{(\cP, T)}( \hat{\cF}_\epsilon \nsubseteq {\cF} | \cD ) \right),
\end{align*}
with $\eta_n(\cD, \epsilon)$ as in Theorem \ref{theo:hatFequalF}.
It follows from Theorem \ref{theo:hatFequalF} that $r_n(\cD, \epsilon) \pto 0$ as $n \to \infty$.

\noindent \textbf{Proof of (Interaction lower bound):}\\
Assume that $S^\pm$ is a union interaction. Then we have that
\begin{align*}
    \text{DWP}(S^\pm) &= P_{(\cP, T)}(S^\pm \in \hat{\cF}_\epsilon | \cD )\\
    &\geq 
     P_{(\cP, T)}(S^\pm \in \cF | \cD ) - P_{(\cP, T)}( \hat{\cF}_\epsilon \neq {\cF} | \cD )\\
     &\geq P_{(\cP, T)}(S^\pm \in \cF | \cD ) - \left(\frac{ 4 \epsilon}{C_\beta^2 C_\gamma^{2\max_js_j -1}}\right)^{\tilde{C}} - \eta_n(\cD, \epsilon)\\
     &\geq 0.5^{\tilde{s}} - P_{(\cP,T)}(\Omega_0^c | \cD) -  \left(\frac{ 4 \epsilon}{C_\beta^2 C_\gamma^{2\max_js_j -1}}\right)^{\tilde{C}} - \eta_n(\cD, \epsilon)\\
     &\geq 0.5^{\tilde{s}} -  \left(\frac{ 4 \epsilon}{C_\beta^2 C_\gamma^{2\max_js_j -1}}\right)^{\tilde{C}} - r_n(\cD, \epsilon),
\end{align*}
where the second inequality follows from Corollary \ref{theo:hatFequalF} and the third inequality follows from Theorem \ref{theo:SappearNew}.

\noindent \textbf{Proof of (Non-interaction upper bound):}\\
Assume that $S^\pm$ is not a union interaction. Then we have that
\begin{align*}
    \text{DWP}(S^\pm) &= P_{(\cP, T)}(S^\pm \in \hat{\cF}_\epsilon | \cD )\\
    &\leq 
     P_{(\cP, T)}(S^\pm \in \cF | \cD ) + P_{(\cP, T)}( \hat{\cF}_\epsilon \nsubseteq {\cF} | \cD )\\
     &\leq  0.5^{\tilde{s}}(1 - C_{\mathrm{root}}(\cD) /2) + r_n(\cD, \epsilon),
\end{align*}
where the second inequality follows from Theorem \ref{theo:balancedRootFeature}.
\end{proof}

\FloatBarrier

\section{Additional figures}

\begin{figure}[h!]
    \centering
    \includegraphics[width=0.5\textwidth]{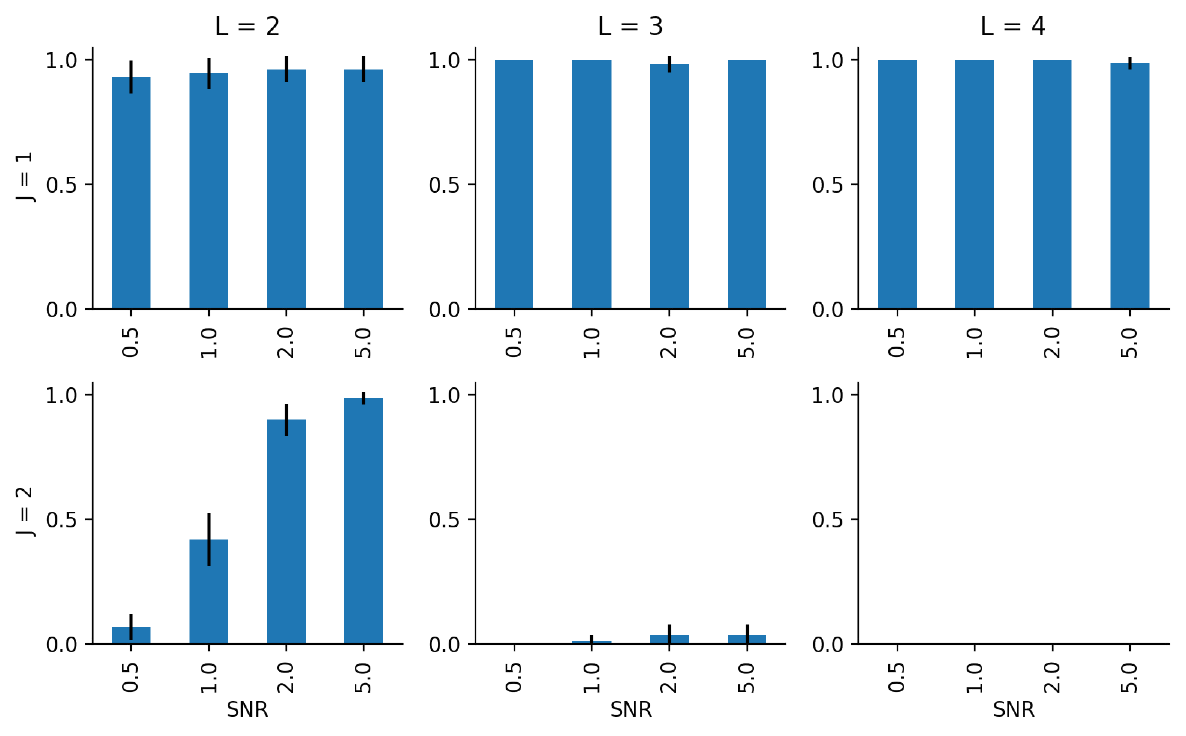}
    \caption{
    Simulation results for the performance of the LSSFind (Algorithm \ref{Algo:1}).
    The data is generated from an LSS model with Gaussian noise as in \eqref{eq:simulations_model_lss} with $n = 1,000$ samples and $p = 20$ features. Standard deviations of the proximity scores are given in error bars.
    Different number of basic interactions $K$ are shown in different rows, of interaction-orders $L$ in different columns, and of a series of SNRs on the x-axis. 
    The y-axis shows the proximity score in \eqref{eq:proxScore}.
    A proximity score of one corresponds to perfect recovery of all interactions simultaneously.
    }
    \label{fig:Sim1}
\end{figure}

\begin{figure}[h!]
\centering
\includegraphics[width=0.15\textwidth]{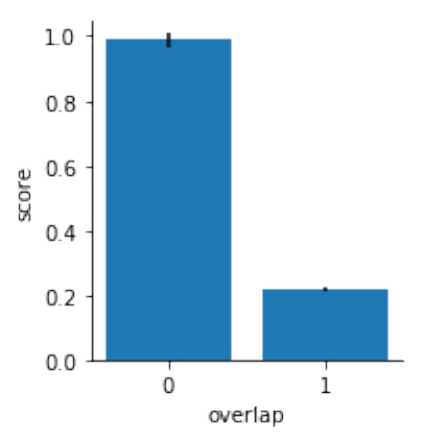}
\includegraphics[width=0.15\textwidth]{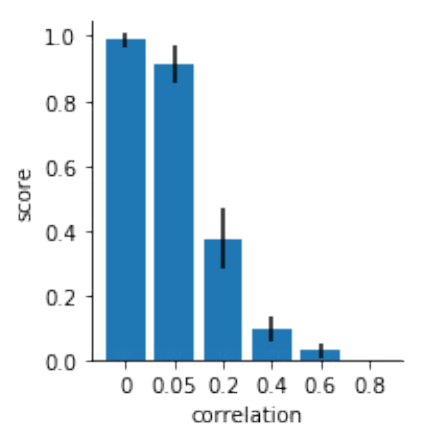}
\includegraphics[width=0.15\textwidth]{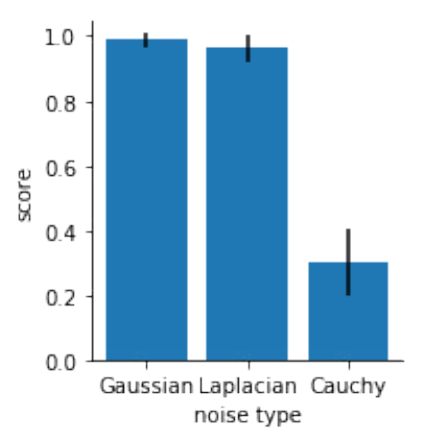}

\caption{
  Simulation results similar to those in Figure \ref{fig:Sim1} for interactions of order $L = 2$ and $J=2$, but when the data is generated from a mis-specified version of the LSS model, with $n = 1,000$ samples and $p = 20$ features.
  Left : signed features of different basic interactions are overlapping.
    When $\textrm{overlap}=1$, the basic interactions are $((1, -1), (2, -1))$, $( (2,-1)$, $(3, -1)$.
    Middle: different features are correlated instead of independent. 
    When $\text{corr}=\alpha$, the correlation between feature $j_1$ and $j_2$ is $\alpha^{|j_1 - j_2|}$.
    Right: the noise follows a Laplace or Cauchy distribution, instead of Gaussian distributions.
    }\label{fig:LSSviolation}
\end{figure}

\begin{figure}[h!]
\centering
\includegraphics[width=0.5\textwidth]{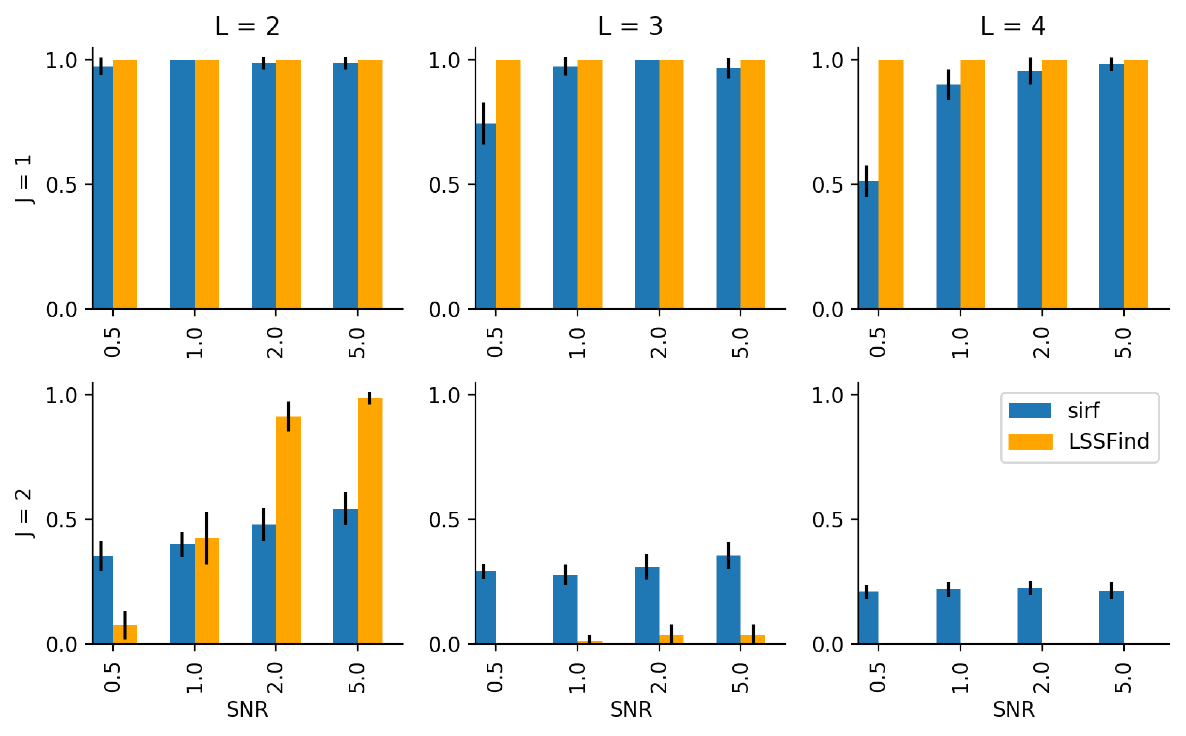}

\caption{
  Simulation results of LSSFind ({\color{orange} orange}) and iRF ({\color{blue} blue}) analog as in Figure \ref{fig:Sim1} but with the performance measure \eqref{eq:proxScore2} instead of \eqref{eq:proxScore}. 
    }\label{fig:LSSiRFcompare}
\end{figure}

\begin{figure}[h!]
\centering
\includegraphics[width=0.5\textwidth]{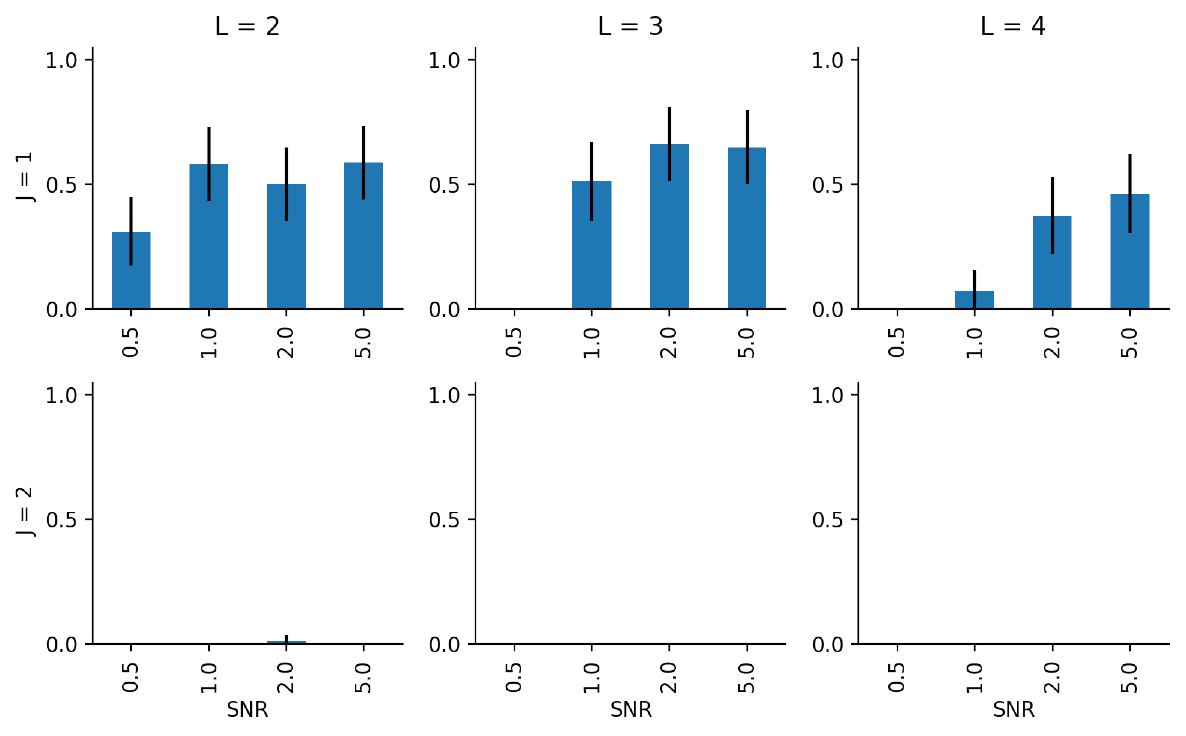}

\caption{
  Simulation results for iRF analog as in Figure \ref{fig:Sim1}. LSSFind has higher score when the number of basic interactions $K=1$ or when the order of interactions $L=2$. For other cases, neither methods have good performance. Note that when a different metric is used, the story is different, see Figure \ref{fig:LSSiRFcompare}. 
    }\label{fig:LSSiRFcompare2}
\end{figure}

\begin{figure}[h!]
\centering
\includegraphics[width=0.3\textwidth]{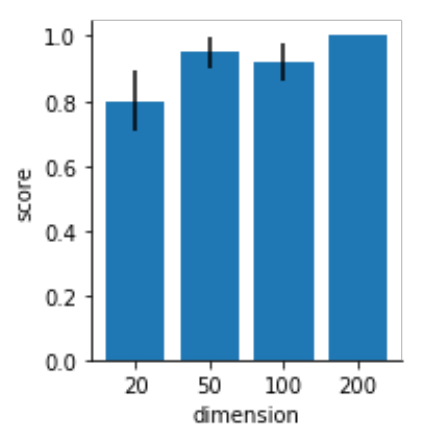}

\caption{
  Simulation results for LSSFind when the dimension $p$ grows and the samplesize grows at the rate of $log(p)$. LSSFind has higher score when the dimension grows higher.
    }\label{fig:high_dimension}
\end{figure}

\FloatBarrier

\section{Acknowledgments}
This work was supported in part by a Chan Zuckerberg Biohub Intercampus Research Award to BY.
MB was supported by Deutsche Forschungsgemeinschaft (DFG; German Research Foundation) Post-doctoral Fellowship BE 6805/1-1. 
MB acknowledges partial support from NSF Grant Big Data 60312.
BY acknowledges partial support from National Science Foundation grants NSF-DMS-1613002, 1953191, 2015341, and IIS 1741340.
This work was supported  in part by the Center for Science of Information (CSoI), an NSF Science and Technology Center, under grant agreement CCF-0939370, and by NSF and the Simons Foundation for the Collaboration on the Theoretical Foundations of Deep Learning through awards DMS-2031883 and \#814639 respectively.
Helpful comments of Sumanta Basu and Karl Kumbier are gratefully acknowledged.


\begin{thebibliography}{10}

\bibitem{Basu2017}
Sumanta Basu, Karl Kumbier, James~B. Brown, and Bin Yu.
\newblock Iterative random forests to discover predictive and stable high-order
  interactions.
\newblock {\em Proceedings of the National Academy of Sciences of the United
  States of America}, 115(8):1943--1948, 2018.

\bibitem{benard2021}
Cl{\'e}ment B{\'e}nard, G{\'e}rard Biau, S{\'e}bastien {da Veiga}, and Erwan
  Scornet.
\newblock {{SHAFF}}: {{Fast}} and consistent {{SHApley eFfect}} estimates via
  random {{Forests}}.
\newblock arXiv:2105.11724, 2021.

\bibitem{benard2021a}
Cl{\'e}ment B{\'e}nard, G{\'e}rard Biau, S{\'e}bastien Da~Veiga, and Erwan
  Scornet.
\newblock {{SIRUS}}: {{Stable}} and {{Interpretable RUle Set}} for
  classification.
\newblock {\em Electronic Journal of Statistics}, 15(1), 2021.

\bibitem{biau2012}
G{\'{e}}rard Biau.
\newblock Analysis of a random forests model.
\newblock {\em Journal of Machine Learning Research}, 13:1063--1095, 2012.

\bibitem{Breiman2001}
Leo Breiman.
\newblock Random forests.
\newblock {\em Machine Learning}, 45:1--33, 2001.

\bibitem{breiman2004}
Leo Breiman.
\newblock Consistency for a simple model of random forests.
\newblock Statistical Department, University of California at Berkeley.
  Technical Report., 2004.

\bibitem{chen2012random}
Xi~Chen and Hemant Ishwaran.
\newblock Random forests for genomic data analysis.
\newblock {\em Genomics}, 99(6):323--329, 2012.

\bibitem{cliff2019high}
Ashley Cliff, Jonathon Romero, David Kainer, Angelica Walker, Anna Furches, and
  Daniel Jacobson.
\newblock A high-performance computing implementation of iterative random
  forest for the creation of predictive expression networks.
\newblock {\em Genes}, 10(12):996, 2019.

\bibitem{debeer2020conditional}
Dries Debeer and Carolin Strobl.
\newblock Conditional permutation importance revisited.
\newblock {\em BMC bioinformatics}, 21(1):1--30, 2020.

\bibitem{denil2014}
Misha Denil, David Matheson, and Nando~De Freitas.
\newblock Narrowing the gap: Random forests in theory and in practice.
\newblock In Eric~P. Xing and Tony Jebara, editors, {\em Proceedings of the
  31st International Conference on Machine Learning}, volume~32 of {\em
  Proceedings of Machine Learning Research}, pages 665--673, Bejing, China,
  22--24 Jun 2014. PMLR.

\bibitem{duroux2016}
Roxane Duroux and Erwan Scornet.
\newblock Impact of subsampling and pruning on random forests.
\newblock arXiv:1603.04261, 2016.

\bibitem{ferrelljr1996}
James~E. Ferrell~Jr.
\newblock Tripping the switch fantastic: How a protein kinase cascade can
  convert graded inputs into switch-like outputs.
\newblock {\em Trends in Biochemical Sciences}, 21(12):460--466, 1996.

\bibitem{friedman2001greedy}
Jerome~H Friedman.
\newblock Greedy function approximation: A gradient boosting machine.
\newblock {\em Annals of Statistics}, 29(5):1189--1232, 2001.

\bibitem{han2000mining}
Jiawei Han, Jian Pei, and Yiwen Yin.
\newblock Mining frequent patterns without candidate generation.
\newblock {\em ACM sigmod record}, 29(2):1--12, 2000.

\bibitem{hastie2015statistical}
Trevor Hastie, Robert Tibshirani, and Martin Wainwright.
\newblock Statistical learning with sparsity.
\newblock {\em Monographs on statistics and applied probability}, 143:143,
  2015.

\bibitem{hoffman2013}
Michael~M Hoffman, Jason Ernst, Steven~P Wilder, Anshul Kundaje, Robert~S
  Harris, Max Libbrecht, Belinda Giardine, Paul~M Ellenbogen, Jeffrey~A Bilmes,
  Ewan Birney, et~al.
\newblock Integrative annotation of chromatin elements from encode data.
\newblock {\em Nucleic acids research}, 41(2):827--841, 2013.

\bibitem{ishwaran2007}
Hemant Ishwaran.
\newblock Variable importance in binary regression trees and forests.
\newblock {\em Electronic Journal of Statistics}, 1(0):519--537, 2007.

\bibitem{Janitza2016}
Silke Janitza, Ender Celik, and Anne-Laure Boulesteix.
\newblock A computationally fast variable importance test for random forests
  for high-dimensional data.
\newblock {\em Advances in Data Analysis and Classification}, 12(4):885--915,
  2018.

\bibitem{jiang2007mipred}
Peng Jiang, Haonan Wu, Wenkai Wang, Wei Ma, Xiao Sun, and Zuhong Lu.
\newblock Mipred: classification of real and pseudo microrna precursors using
  random forest prediction model with combined features.
\newblock {\em Nucleic acids research}, 35(2):W339--W344, 2007.

\bibitem{kobiler2005}
O.~Kobiler, A.~Rokney, N.~Friedman, D.~L. Court, J.~Stavans, and A.~B.
  Oppenheim.
\newblock Quantitative kinetic analysis of the bacteriophage genetic network.
\newblock {\em Proceedings of the National Academy of Sciences},
  102(12):4470--4475, 2005.

\bibitem{Kumbier2018}
Karl Kumbier, Sumanta Basu, James~B. Brown, Susan Celniker, and Bin Yu.
\newblock Refining interaction search through signed iterative random forests.
\newblock arXiv:1810.07287, 2018.

\bibitem{levine2008}
Erel Levine and Terence Hwa.
\newblock Small rnas establish gene expression thresholds.
\newblock {\em Current Opinion in Microbiology}, 11(6):574--579, 2008.

\bibitem{li2019}
Xiao Li, Yu~Wang, Sumanta Basu, Karl Kumbier, and Bin Yu.
\newblock A debiased mdi feature importance measure for random forests.
\newblock In {\em Advances in Neural Information Processing Systems.}, pages
  8047--57, {San Diego}, 2019.

\bibitem{little2005}
J.~W. Little.
\newblock Threshold effects in gene regulation: When some is not enough.
\newblock {\em Proceedings of the National Academy of Sciences},
  102(15):5310--5311, 2005.

\bibitem{little1999}
J.~W. Little, Shepley, and Wert.
\newblock Robustness of a gene regulatory circuit.
\newblock {\em The EMBO Journal}, 18(15):4299--4307, 1999.

\bibitem{loecher2020unbiased}
Markus Loecher.
\newblock Unbiased variable importance for random forests.
\newblock Communications in Statistics - Theory and Methods, 2020.
\newblock DOI: 10.1080/03610926.2020.1764042.

\bibitem{louppe2013}
Gilles Louppe, Louis Wehenkel, Antonio Sutera, and Pierre Geurts.
\newblock Understanding variable importances in forests of randomized trees.
\newblock In {\em Advances in {{Neural Information Processing Systems}} 26},
  pages 431--439. {Curran Associates, Inc.}, 2013.

\bibitem{mentch2015}
Lucas Mentch and Giles Hooker.
\newblock Quantifying uncertainty in random forests via confidence intervals
  and hypothesis tests.
\newblock {\em Journal of Machine Learning Research}, 17:1--41, 2016.

\bibitem{mentch2017}
Lucas Mentch and Giles Hooker.
\newblock Formal {{Hypothesis Tests}} for {{Additive Structure}} in {{Random
  Forests}}.
\newblock {\em Journal of Computational and Graphical Statistics},
  26(3):589--597, 2017.

\bibitem{nembrini2019}
Stefano Nembrini.
\newblock Bias in the intervention in prediction measure in random forests:
  Illustrations and recommendations.
\newblock {\em Bioinformatics}, 35(13):2343--2345, 2019.

\bibitem{nembrini2018}
Stefano Nembrini, Inke~R K{\"o}nig, and Marvin~N Wright.
\newblock The revival of the gini importance?
\newblock {\em Bioinformatics}, 34(21):3711--3718, 2018.

\bibitem{saltelli2002}
Andrea Saltelli.
\newblock Making best use of model evaluations to compute sensitivity indices.
\newblock {\em Computer Physics Communications}, 145(2):280--297, 2002.

\bibitem{scornet2015}
Erwan Scornet, G{\'e}rard Biau, and Jean-Philippe Vert.
\newblock Consistency of random forests.
\newblock {\em The Annals of Statistics}, 43(4):1716--1741, 2015.

\bibitem{shah2014random}
Rajen~Dinesh Shah and Nicolai Meinshausen.
\newblock Random intersection trees.
\newblock {\em The Journal of Machine Learning Research}, 15(1):629--654, 2014.

\bibitem{strobl2007unbiased}
Carolin Strobl, Anne-Laure Boulesteix, and Thomas Augustin.
\newblock Unbiased split selection for classification trees based on the gini
  index.
\newblock {\em Computational Statistics \& Data Analysis}, 52(1):483--501,
  2007.

\bibitem{Strobl2008}
Carolin Strobl, Anne-Laure Boulesteix, Thomas Kneib, Thomas Augustin, and Achim
  Zeileis.
\newblock Conditional variable importance for random forests.
\newblock {\em BMC Bioinformatics}, 9(1):307, 2008.

\bibitem{Lasso1996}
Robert Tibshirani.
\newblock Regression shrinkage and selection via the lasso.
\newblock {\em Journal of the Royal Statistical Society. Series B
  (Methodological)}, 58(1):267--288, 1996.

\bibitem{2012Touw}
Wouter~G. Touw, Jumamurat~R. Bayjanov, Lex Overmars, Lennart Backus, Jos
  Boekhorst, Michiel Wels, and Sacha A. F.~T. van Hijum.
\newblock Data mining in the life sciences with random forest: a walk in the
  park or lost in the jungle?
\newblock {\em Briefings in Bioinformatics}, 14(3):315--326, 2012.

\bibitem{vapnik1998statistical}
Vladimir Vapnik.
\newblock Statistical learning theory, 1998.

\bibitem{wager2018}
Stefan Wager and Susan Athey.
\newblock Estimation and inference of heterogeneous treatment effects using
  random forests.
\newblock {\em Journal of the American Statistical Association},
  113(523):1228--1242, 2018.

\bibitem{wan2009}
Xiang Wan, Can Yang, Qiang Yang, Hong Xue, Nelson~LS Tang, and Weichuan Yu.
\newblock {{MegaSNPHunter}}: A learning approach to detect disease
  predisposition {{SNPs}} and high level interactions in genome wide
  association study.
\newblock {\em BMC Bioinformatics}, 10(1):13, 2009.

\bibitem{wolpert1969}
L~Wolpert.
\newblock Positional information and the spatial pattern of cellular
  differentiationt.
\newblock {\em Journal of Theoretical Biology}, 25(1):1--47.

\bibitem{yoshida2011}
Makiko Yoshida and Asako Koike.
\newblock {{SNPInterForest}}: a new method for detecting epistatic
  interactions.
\newblock {\em BMC Bioinformatics}, 12(1), 2011.

\bibitem{Yu2020}
Bin Yu and Karl Kumbier.
\newblock Veridical data science.
\newblock {\em Proceedings of the National Academy of Sciences},
  117(8):3920--3929, 2020.

\bibitem{zhao06a}
Peng Zhao and Bin Yu.
\newblock On model selection consistency of lasso.
\newblock {\em Journal of Machine Learning Research}, 7(90):2541--2563, 2006.

\bibitem{zhou2020a}
Zhengze Zhou and Giles Hooker.
\newblock Unbiased measurement of feature importance in tree-based methods.
\newblock arXiv:1903.05179, 2020.

\end{thebibliography}
\end{document}